\documentclass[10 pt, leqno]{article}
\baselineskip 18pt
\date{}
\usepackage{amssymb, amsbsy, amsmath, amsfonts, amssymb, amscd, mathrsfs, amsthm, dsfont}

\usepackage[english]{babel}
\usepackage[T1]{fontenc}
\usepackage{indentfirst}
\usepackage{color}
\usepackage[all]{xy}
 
\makeatletter
\@addtoreset{equation}{section}
\makeatother

\newtheorem{statement}{}[section]
\newtheorem{theorem}[statement]{Theorem}
\newtheorem{lemma}[statement]{Lemma}

\newtheorem{proposition}[statement]{Proposition}
\newtheorem{definition}[statement]{Definition}
\newtheorem{corollary}[statement]{Corollary}

\newcommand\C{\mathbb C}

\newcommand\R{\mathbb R}
\newcommand\T{\mathbb T}
\newcommand\D{\mathbb D}

\newcommand\e{{\rm e}}

\newcommand\eps{\varepsilon}
\newcommand\ind{\mathds{1}}

\renewcommand \Re{{\mathfrak R}{\rm e}\,}
\renewcommand \Im{{\mathfrak I}{\rm m}\,}
\newcommand\converge{\mathop{\longrightarrow}\limits}

\let\phi=\varphi

\let\tilde=\widetilde
\newcommand\tq{\, ; \ }

\title{\bf Comparison of singular numbers of composition operators on different Hilbert spaces of analytic functions}
\author{\it Pascal~Lef\`evre, Daniel~Li,  \\ \it Herv\'e~Queff\'elec, Luis~Rodr{\'\i}guez-Piazza}

\date{\footnotesize \today}

\begin{document}

\maketitle

\noindent {\bf Abstract.} We compare the rate of decay of singular numbers of a given composition operator acting on various Hilbert spaces of analytic 
functions on the unit disk $\D$.  We show that for the Hardy and Bergman spaces, our results are sharp. 
We also give lower and upper estimates of the singular numbers of the composition operator with symbol the ``cusp map'' and the lens maps, acting on weighted 
Dirichlet spaces. 
\medskip

\noindent {\bf MSC 2010} primary: 47B33 ; secondary: 46B28 ; 30H10 ; 30H20
\smallskip

\noindent {\bf Key-words} approximation numbers ; composition operator ; Hardy space ; Hilbert spaces of analytic functions ; Schatten classes ; 
singular numbers ; weighted Bergman space ; weighted Dirichlet space

%%%%%%%%%%%%%%%%%%%%%%%%%%%%%%%%%%%%%%%%%%%%%%%%%%%%%%%%%%%%%%%%%%%%%%%%%%%
\section {Introduction} 

Composition operators are mainly studied on Hilbert spaces of analytic functions, and more specifically on the Hardy space $H^2$, the Bergman space  
${\mathfrak B}^2$, and the Dirichlet space ${\mathcal D} = {\mathcal D}^{\, 2}$. It is well known, thanks to the Littlewood subordination principle, that 
every analytic self-map $\phi \colon \D \to \D$ induces a bounded composition operator $C_\phi$ on $H^2$ and on ${\mathfrak B}^2$, but not necessarily 
on ${\mathcal D}^{\, 2}$ (\cite[Chapter~1 and Exercises]{Shapiro-livre}; see also \cite[Section~6.2]{Primer-Dirichlet}). There exist even composition operators 
which are not bounded on ${\mathcal D}^2$ 
but which are in all Schatten classes $S_p (H^2)$ and $S_p ({\mathfrak B}^2)$ with $p > 0$, of both the Hardy space and the Bergman space 
(\cite[Theorem~2.10]{LLQR-JFA2013}). Nevertheless, for compact composition operators, the following results hold: 1) every composition operator which is 
compact on $H^2$ is compact on ${\mathfrak B}^2$ (see \cite[Theorem~3.5]{Shapiro-livre} and \cite[Theorem~3.5]{MacCluer-Shapiro}); 2) every 
composition operator that is compact on ${\mathcal D}^{\, 2}$ is in all the Schatten classes $S_p (H^2)$ for $p > 0$ (\cite[Theorem~2.9]{LLQR-JFA2013}); for 
every $p > 0$, every composition operator that is in $S_p (H^2)$ is in $S_p ({\mathfrak B}^2)$. Since the membership in a Schatten class $S_p$ of an operator on 
a Hilbert space means that its approximation numbers are $\ell_p$-summable, that suggests that there is a strong link between the approximation numbers 
$a_n^{{\mathcal D}^{\, 2}} (C_\phi)$, $a_n^{H^2} (C_\phi)$ and $a_n^{{\mathfrak B}^2} (C_\phi)$ of the composition operator $C_\phi$ on 
${\mathcal D}^{\, 2}$, $H^2$ and ${\mathfrak B}^2$ respectively. 
\smallskip

The aim of this paper is to prove  that, indeed, in some sense $a_n^{{\mathcal D}^{\, 2}} (C_\phi)$ is ``greater'' than $a_n^{H^2} (C_\phi)$, which is ``greater'' 
than $a_n^{{\mathfrak B}^2} (C_\phi)$. We recover then that $C_\phi \in S_p (H^2)$ implies that $C_\phi \in S_p ({\mathfrak B}^2)$ 
(Section~\ref{comparison}). In Section~\ref{sec: conditional multipliers}, we also give some results about conditional multipliers. 

In Section~\ref{Hardy-Bergman} we give an example with $C_\phi$ compact on $H^2$ but not in any Schatten class 
$S_p ({\mathfrak B}^2)$ for $p < \infty$. We prove that $C_\phi \in S_p (H^2)$ implies that $C_\phi \in S_{p/2} ({\mathfrak B}^2)$ and give an example 
with $C_\phi \in S_p (H^2)$ but $C_\phi \notin S_q ({\mathfrak B}^2)$ for any $q < p/2$. 
\smallskip

However, our result is not sufficient to explain why the compactness of $C_\phi$ on ${\mathcal D}^{\, 2}$ implies that $C_\phi \in S_p (H^2)$ for all $p > 0$. 
A more subtle relationship should exist between $a_n^{{\mathcal D}^{\, 2}} (C_\phi)$ and $a_n^{H^2} (C_\phi)$.  In fact, for every composition operator 
$C_\phi$ that is compact on ${\mathcal D}^{\, 2}$, we have 
$\lim_{n \to \infty} \big[ a_n^{{\mathcal D}^{\, 2}} (C_\phi) \big]^{1 / n} = \lim_{n \to \infty} \big[ a_n^{H^2} (C_\phi) \big]^{1 / n}$ 
(\cite[Theorem~3.1 and Theorem~3.14]{LQR-radius}); in particular, for symbols $\phi$ such that $\| \phi \|_\infty < 1$, the numbers  
$a_n^{{\mathcal D}^{\, 2}} (C_\phi)$ and $a_n^{H^2} (C_\phi)$ behave like $r^n$, with $r = \exp ( - 1 / {\rm Cap} [\phi (\D)] )$, and where 
${\rm Cap} [\phi (\D)]$ is the Green capacity of $\phi (\D)$. On the other hand, for the so-called cusp map $\chi$, we have, for some constants $c_1 > c_1' > 0$ 
(\cite[Theorem~4.3]{LQR-Fennicae}):
\begin{equation} \label{n sur log n}
\e^{- {c_1} n / \log n} \lesssim a_n^{H^2} (C_\chi) \lesssim \e^{- {c_1'} n / \log n} 
\end{equation} 
and, for some constants $c_2 > c_2' > 0$ (\cite[Theorem~3.1]{LLQR-Arkiv}):
\begin{equation} \label{cusp-Dirichlet} 
\e^{- {c_2} \sqrt{n}} \lesssim a_n^{{\mathcal D}^{\, 2}} (C_\chi) \lesssim \e^{- {c_2'} \sqrt{n}} \, ,
\end{equation} 
which is much greater. In Section~\ref{cusp}, we show that the behavior of $a_n (C_\chi)$ in \eqref{n sur log n} holds in all weighted Dirichlet spaces 
${\mathcal D}^{\, 2}_\alpha$ for $\alpha > 0$ (with other constants), and hence \eqref{cusp-Dirichlet} shows that a jump happens for $\alpha = 0$. We also
look at the lens maps.

%%%%%%%%%%%%%%%%%%%%%%%%%%%%%%%%%%%%%%%%%%%%%%%%%%%%%%%%%%%%%%%%%%%%%%%%%%%%
\section {Notation and background}

Let $\D$ be the open unit disk in $\C$. We denote $dA = dx \, dy / \pi$ the normalized area measure on $\D$. The normalized Lebesgue measure 
$dt / 2 \pi$ on $\T = \partial \D$ is denoted $dm$. 
%\smallskip

%%%%%%%%%%%%%%%%%
\subsection {Hilbert spaces of analytic functions}

Recall that the Hardy space $H^2$ is the space of analytic functions $f \colon \D \to \C$ such that:
\begin{displaymath} 
\| f \|_{H^2}^2 := \sup_{0 < r < 1} \int_\T |f (r \xi) |^2 \, d m (\xi) < \infty \, .
\end{displaymath} 
If $f (z) = \sum_{k = 0}^\infty c_k z^k$, we have $\| f \|_{H^2}^2 = \sum_{k = 0}^\infty |c_k|^2$. 
\smallskip

The Bergman space ${\mathfrak B}^2$ is the space of analytic functions $f \colon \D \to \C$ such that:
\begin{displaymath} 
\| f \|_{{\mathfrak B}^2}^2 := \int_\D |f (z) |^2 \, dA (z) < \infty \, .
\end{displaymath} 
If $f (z) = \sum_{k = 0}^\infty c_k z^k$, we have $\| f \|_{{\mathfrak B}^2}^2 = \sum_{k = 0}^\infty \frac{|c_k|^2}{k + 1}\,$.
\smallskip\goodbreak

More generally, for $\gamma > - 1$, the \emph{weighted Bergman space} ${\mathfrak B}_\gamma^2$ is the space of analytic functions $f \colon \D \to \C$ 
such that:
\begin{displaymath} 
\| f \|_{{\mathfrak B}_\gamma^2}^2 = (\gamma + 1) \int_\D |f (z)|^2 (1 - |z|^2)^\gamma \, dA (z) < \infty \, ,
\end{displaymath} 
and if $f (z) = \sum_{k = 0}^\infty c_k z^k$, we have:
\begin{displaymath} 
\| f \|_{{\mathfrak B}_\gamma^2}^2 = \sum_{k = 0}^\infty \beta_k |c_k|^2 \, ,
\end{displaymath} 
with:
\begin{displaymath} 
\beta_k = \frac{k! \, \Gamma (\gamma + 2)}{\Gamma (k + \gamma + 2)} \approx \frac{1}{(k + 1)^{\gamma + 1}} 
\end{displaymath} 
(the equivalence depends on $\gamma$).
\smallskip

Hence ${\mathfrak B}^2 = {\mathfrak B}_0^2$ and $H^2$ corresponds to the degenerate case $\gamma = - 1$.
\smallskip

The Dirichlet space ${\mathcal D}^{\, 2}$ is the space of analytic functions  $f \colon \D \to \C$ such that:
\begin{displaymath} 
\| f \|_{{\mathcal D}^{\, 2}}^2 = |f ( 0)|^2 + \int_\D |f ' (z)|^2 \, dA (z) < \infty \, .
\end{displaymath} 
If $f (z) = \sum_{k = 0}^\infty c_k z^k$, we have $\| f \|_{{\mathcal D}^{\, 2}}^2 = |c_0|^2 + \sum_{k = 0}^\infty k \, |c_k|^2$.
\medskip 

With the equivalent norm $\|| f |\|_{{\mathcal D}^{\, 2}}^2 = \| f \|_{H^2}^2 +  \int_\D |f ' (z)|^2 \, dA (z)$, we have the more pleasant 
form $\|| f |\|_{{\mathcal D}^{\, 2}}^2 = \sum_{k = 0}^\infty (k + 1) \, |c_k|^2$.
\smallskip

More generally, for $\alpha > - 1$, the \emph{weighted Dirichlet space} ${\mathcal D}_\alpha^{\, 2}$ is the space of analytic functions  $f \colon \D \to \C$ 
such that:
\begin{displaymath} 
\| f \|_{{\mathcal D}_\alpha^{\, 2}}^2 = |f ( 0)|^2 + (\alpha + 1) \int_\D |f ' (z)|^2 (1 - |z|^2)^\alpha \, dA (z) < \infty \, ,
\end{displaymath} 
and if $f (z) = \sum_{k = 0}^\infty c_k z^k$, we have:
\begin{displaymath} 
\| f \|_{{\mathcal D}_\alpha^{\, 2}}^2 = \sum_{k = 0}^\infty \beta_k |c_k|^2 \, , 
\end{displaymath} 
with $\beta_0 = 1$ and for $k \geq 1$:
\begin{displaymath} 
\beta_k = \frac{k \, .\, k! \, \Gamma (\alpha + 2)}{\Gamma (k + \alpha + 1)} \approx \frac{1}{(k + 1)^{\alpha - 1}} 
\end{displaymath} 
(the equivalence depending on $\alpha$). Another equivalent expression is:
\begin{displaymath} 
\tilde \beta_k = \frac{(k + 1)! \, \Gamma (\alpha + 2)}{\Gamma (k + \alpha + 1)} \, ;
\end{displaymath} 
we have $\beta_k \leq \tilde \beta_k \leq 2 \beta_k$. 
\smallskip

In particular, for $\gamma > - 1$: 
\begin{equation} 
{\mathcal D}^{\, 2} = {\mathcal D}_0^{\, 2} \, , \quad H^2 = {\mathcal D}_1^{\, 2} \quad \text{and} \quad 
{\mathfrak B}^2_\gamma = {\mathcal D}_{\gamma + 2}^{\, 2} \, .
\end{equation} 
%
%\medskip
%%%%%%%%%%%%%%%%%%%%%
\subsection {Composition operators} 

Any analytic self-map $\phi \colon \D \to \D$ defines a bounded composition operator $C_\phi$ on the Hardy space $H^2$ (see 
\cite[Section~2.2]{MacCluer-Shapiro}) and on every weighted Bergman space ${\mathfrak B}_\gamma^2$ for $\gamma > - 1$ 
(\cite[Proposition~3.4]{MacCluer-Shapiro}), hence on every weighted Dirichlet space ${\mathcal D}_\alpha^{\, 2}$ with $\alpha \geq 1$. 
However, this is not always the case on the weighted Dirichlet spaces ${\mathcal D}_\alpha^{\, 2}$ for $\alpha < 1$ 
(\cite[Proposition~3.12]{MacCluer-Shapiro}). 

For convenience, we assume that $\phi$ is not constant and we say that $\phi$ is a \emph{symbol}. We denote $\phi^\ast$ the boundary values function of 
$\phi$. 

\medskip
%%%%%%%%%%%%%%%%%%%%%%%%%%
The Carleson window of size $h$ centered at $\xi \in \T$ is:
\begin{equation} 
W (\xi, h) = \{ z \in \D \tq |z| \geq 1 - h \quad \text{and} \quad - \pi h \leq \arg (\bar{\xi} z)  < \pi h \} \, .
\end{equation} 
For every integer $n \geq 1$ and for $j = 0, \ldots, 2^n - 1$, we set:
\begin{equation} 
W_{n, j} = W (\e^{2 j i \pi / 2^n}, 2^{- n}) \, .
\end{equation} 

We also use the Carleson boxes
\begin{equation}
S (\xi, h) = \{z \in \D \tq |\xi - z| < h\} \, ,
\end{equation}
which satisfy $S (\xi, h) \subseteq W (\xi, h) \subseteq S (\xi, 2 \pi h)$.

The Hastings-Luecking boxes are defined, for every integer $n \geq 1$ and for $0 \leq j \leq 2^n -1$, as: 
\begin{equation}
R_{n, j} = \Big\{z\in \D \tq 1 - \frac{1}{2^{n - 1}} \leq |z| < 1 - \frac{1}{2^{n}} \text{ and }
\frac{2 j \pi}{2^n} \leq \arg z < \frac{2 (j + 1) \pi }{2^n}\,\Big\}
\end{equation}

A measure $\mu$ on $\overline \D$ is a \emph{Carleson measure} if $\sup_{\xi \in \T} \mu \big[ \overline{W (\xi, h)} \big] = {\rm O}\, (h)$. By the 
Carleson embedding theorem, $\mu$ is a Carleson measure if and only if the inclusion map $J_\mu \colon H^2 \to L^2 (\mu)$ is bounded. The automatic 
boundedness of $C_\phi$ on $H^2$ implies that the pull-back measure $m_\phi$, defined as $m_\phi (B) = m [{\phi^\ast}^{ - 1} (B)]$ for all Borel sets 
$B \subseteq \overline \D$, is a Carleson measure. This composition operator is compact on $H^2$ if and only if $m_\phi$ is supported by $\D$ and 
$\sup_{\xi \in \T} m_\phi [W (\xi, h)] = {\rm o}\, (h)$ (\cite{MacCluer}). 

Similar results hold for composition operators on the weighted Bergman spaces (\cite[Theorem~4.3]{MacCluer-Shapiro}. 
%\medskip

%%%%%%%%%%%%%%%%%%%%%%%%%%%%
\subsection {Singular numbers, approximation numbers and Schatten classes} 

Let $H$ be a separable complex Hilbert space and $T \colon H \to H$ be a compact operator. There exist two orthonormal sequences $(u_n)$ and $(v_n)$ and 
a non-increasing sequence $(s_n)$ of non-negative numbers with $s_n \converge_{n \to \infty} 0$ such that, for all $x \in H$:
\begin{equation} 
T (x) = \sum_{n = 1}^\infty s_n \, \langle x \mid v_n \rangle \, u_n \, .
\end{equation} 
This representation $T = \sum_{n=1}^\infty s_n \, u_n \otimes v_n$ is called the Schmidt decomposition of $T$ and the numbers $s_n = s_n (T)$ the 
\emph{singular numbers} of $T$. They are actually the eigenvalues of $|T| = \sqrt{T^\ast T}$ rearranged in non-increasing order. In particular $s_1 = \| T\|$. 

These numbers have the important ``ideal property'':
\begin{displaymath}
s_n (A T B) \leq \| A \| \, s_n (T) \, \| B \| \, .
\end{displaymath}

It is known (see \cite[p.~155]{Carl-Stephani}) that, for all $n \geq 1$, we have $s_n (T) = a_n (T)$, the $n$th \emph{approximation number} of $T$ defined as:
\begin{equation} 
a_n (T) = \inf_{{\rm rank}\, R < n} \| T - R \| \, .
\end{equation} 

For $p > 0$, the Schatten class $S_p (H)$ is the set of all compact operators $T \colon H \to H$ for which 
$\| T \|_p^p := \sum_{n = 1}^\infty [a_n (T)]^p < \infty$. 
\smallskip

We have $S_q (H) \subseteq S_p (H)$ for $0 < q \leq p$. 
\medskip

For composition operators $C_\phi$, D. Luecking  (\cite[Corollary~2]{Luecking}) characterized their membership in the Schatten classes. 

For $\gamma > - 1$, let $dA_\gamma (z) = (\gamma  +1) (1 - |z|^2)^\gamma \, dA (z)$. For $\gamma = - 1$, we set $H^2 = {\mathfrak B}_{- 1}^2$ and 
$dm = dA_{- 1}$. Then, for $\gamma \geq - 1$, the composition operator $C_\phi$ belongs to $S_p ({\mathfrak B}_\gamma^2)$ if and only if:
\begin{equation} 
\sum_{n = 1}^\infty  \sum_{j = 0}^{2^n - 1}\big( 2^{n (\gamma + 2)} A_{\gamma, \phi} (R_{n, j} ) \big)^{p / 2} < \infty \, ,
\end{equation} 
where $A_{\gamma, \phi}$ is the pull-back measure of $A_\gamma$ by $\phi$ (by $\phi^\ast$ for $\gamma = - 1$).

%%%%%%%%%%%%%%%%%%%%%%%%%%%%%%%%%
\bigskip

As usual, the notation $A \lesssim B$ means that $A \leq C \, B$ for some positive constant $C$,  which may depend on some parameters, and $A \approx B$ 
means that $A \lesssim B$ and $A \gtrsim B$.

%%%%%%%%%%%%%%%%%%%%%%%%%%%%%%%%%%%%%%%%%%%%%%%%%%%%%%%%%%%%%%%%%%%%%%%%%%%%%%%
\section {Comparison of approximation numbers} \label{comparison}

\subsection{Main result} 

In the introduction,  we said that, in some sense, $a_n^{{\mathcal D}^{\, 2}} (C_\phi)$ is ``greater'' than $a_n^{H^2} (C_\phi)$, which is ``greater'' 
than $a_n^{{\mathfrak B}^2} (C_\phi)$. This vague statement is made more precise in the following result.

\begin{theorem} \label{main result} 
For any symbol $\phi$, we have, for every $n \geq 1$:
\begin{equation} 
\prod_{j = 1}^n a_j^{{\mathfrak B}^2} (C_\phi) \leq \prod_{j = 1}^n a_j^{H^2} (C_\phi) \leq \prod_{j = 1}^n a_j^{{\mathcal D}^{\, 2}} (C_\phi) \, .
\end{equation} 
\end{theorem}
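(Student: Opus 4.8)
The plan is to pass from approximation numbers to products via the exterior power. Since $s_j=a_j$ and, for a compact $T$ on a Hilbert space, $\prod_{j=1}^n a_j(T)=\|\Lambda^n T\|$ (the norm of the $n$-th exterior power, equivalently the supremum of the volume ratios $\|Tf_1\wedge\cdots\wedge Tf_n\|/\|f_1\wedge\cdots\wedge f_n\|$), it suffices to compare the quantities $\|\Lambda^n(\cdot)\|$ for the realizations of $C_\phi$ on the three spaces. First I would diagonalize the weights: identifying each space unitarily with $\ell^2(\N)$ through its orthonormal monomial basis turns $C_\phi$ into $T_{\mathcal H}=D_{\mathcal H}\,\Phi\,D_{\mathcal H}^{-1}$, where $\Phi$ is the matrix of $C_\phi$ in the basis $(z^k)$ and $D_{\mathcal H}=\mathrm{diag}(\sqrt{w_k^{\mathcal H}})$ records the weights $w_k^{H^2}=1$, $w_k^{\mathfrak B^2}=\tfrac1{k+1}$, $w_k^{\mathcal D^{\,2}}\asymp k+1$. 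Consequently $a_j^{H^2}=s_j(\Phi)$, while $T_{H^2}=G\,T_{\mathfrak B^2}\,G^{-1}$ and $T_{\mathcal D^{\,2}}=G'\,\Phi\,{G'}^{-1}$ with $G=\mathrm{diag}(\sqrt{k+1})$ and $G'=\mathrm{diag}(\sqrt{w_k^{\mathcal D^{\,2}}})$ both \emph{non-decreasing} positive diagonals. Thus both inequalities in the theorem are instances of a single statement: if $X$ is (a diagonal conjugate of) a composition matrix and $E>0$ is a non-decreasing diagonal, then
\begin{equation}
\prod_{j=1}^n s_j(X)\le \prod_{j=1}^n s_j\big(E X E^{-1}\big).
\end{equation}

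The backbone of the proof of this \emph{Key Lemma} is a convexity (three-lines) argument. Put $H=\Lambda^n E>0$ and $Y=\Lambda^n X$, so that $\Lambda^n(E^tXE^{-t})=H^tYH^{-t}$ depends holomorphically on a complex parameter, and set $\Psi_n(t)=\log\|H^tYH^{-t}\|=\log\prod_{j=1}^n s_j(E^tXE^{-t})$. On every vertical line $\Re z=t$ the factor $H^{iy}$ is \emph{unitary}, so $\|H^{t+iy}YH^{-t-iy}\|=\|H^tYH^{-t}\|$ is independent of $y$; by the Hadamard three-lines theorem the function $\Psi_n$ is convex on $\R$. Since $\Psi_n(0)=\log\prod_{j\le n}s_j(X)$ and $\Psi_n(1)=\log\prod_{j\le n}s_j(EXE^{-1})$, the Key Lemma reduces to showing that this convex function is non-decreasing across $[0,1]$.

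The monotonicity is transparent in the model case $\phi(0)=0$. Then $\phi^k$ vanishes to order $k$, so $\Phi$ (hence $X$ and $Y$) is lower triangular: \emph{composition raises the Taylor degree}. For $t\le 0$ each entry of $H^tYH^{-t}$ is damped (the strictly sub-diagonal entries are multiplied by $(h_j/h_k)^t\le1$ with $h_j\ge h_k$), so $\Psi_n(t)$ stays bounded as $t\to-\infty$; a convex function with a finite left limit is non-decreasing, whence $\Psi_n(0)\le\Psi_n(1)$. In general one differentiates: writing $s_1,u,v$ for the top singular value and the top left/right singular vectors of $Y=\Lambda^n X$, one gets
\begin{equation}
\Psi_n'(0)=\big\langle (\log H)\,u,u\big\rangle-\big\langle (\log H)\,v,v\big\rangle ,
\end{equation}
where $\log H\ge0$ is the ``frequency-counting'' diagonal; by convexity $\Psi_n'(0)\ge0$ already forces $\Psi_n(0)\le\Psi_n(1)$. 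So the whole theorem comes down to verifying that $C_\phi$ does not \emph{lower} the frequency along its dominant singular direction, i.e.\ that the image $u\propto Yv$ carries at least as much $\log H$-weight as the input $v$.

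The main obstacle is precisely establishing $\Psi_n'(0)\ge0$ when $\phi(0)\neq0$. There the matrix $\Phi$ acquires degree-lowering entries (with constant term $\phi(0)^k$ in the $k$-th column), the triangular structure is lost, $\Psi_n$ becomes genuinely U-shaped, and the naive per-vector estimate fails for arbitrary $v$ — it holds only because the \emph{top} singular vector is suitably aligned, a Perron–Frobenius-type positivity that is not automatic from the coefficients of $\phi$. I expect this to be the crux, and I would attack it in one of two ways: either normalize by precomposing with the involutive automorphism exchanging $0$ and $\phi(0)$ to reduce to the clean degree-raising case (while carefully tracking that the automorphic factor contributes compatibly to all three spaces), or expand the Gram determinants by Cauchy–Binet,
\begin{equation}
\det\!\big(A^*\Phi^* D_{\mathcal H}^2\,\Phi A\big)=\sum_{|S|=n}\Big(\prod_{k\in S} w_k^{\mathcal H}\Big)\,\big|\det[A^*\psi_k:k\in S]\big|^2 ,
\end{equation}
and prove the required correlation inequality showing that reweighting the subsets $S$ toward higher frequencies increases the supremum over $A$ of the corresponding ratio.
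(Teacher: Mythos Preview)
Your approach is essentially the paper's own. The paper proves the result as a special case of a general comparison principle (its Theorem~3.10 and the underlying Theorem~3.8): if $A$ is lower--triangular in the monomial basis and $D$ is a positive non-decreasing diagonal, then $\big(s_j(D^{-1}AD)\big)_j$ is log-subordinate to $\big(s_j(A)\big)_j$. Two proofs are given: one writes $u(z)=\prod_{j\le n}s_j(D^{-z}A_ND^{z})$ as a supremum of moduli of determinants, hence subharmonic, and applies the maximum principle on the right half-plane; the other passes to $\Lambda^n$, checks that $\Lambda^n A$ is still lower-triangular with respect to the induced diagonal $\Delta=\Lambda^n D$, and invokes Kacnel'son's inequality $\|\Delta^{-1}\Lambda^n(A)\,\Delta\|\le\|\Lambda^n A\|$ (itself a three-lines statement). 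Your argument is a clean blend of the two: take $\Lambda^n$, then run three-lines/convexity on $t\mapsto\log\|H^tYH^{-t}\|$, using boundedness as $t\to-\infty$ (from lower-triangularity) to force monotonicity. That is correct and pleasant; just be aware that when $D$ (hence $H$) is unbounded, the three-lines step needs a truncation $A_N=P_NAP_N$ followed by a limit, exactly as the paper does---otherwise $H^tYH^{-t}$ need not even be bounded for $t$ outside $[0,1]$.

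Where you diverge from the paper is the case $\phi(0)\neq0$. You spend effort speculating that the Key Lemma might still hold for a non-triangular composition matrix, via $\Psi_n'(0)\ge0$ and a putative Perron--Frobenius alignment of the top singular vector. Drop this line: the inequality $\prod_j s_j(X)\le\prod_j s_j(EXE^{-1})$ is \emph{false} for general $X$ and genuinely uses lower-triangularity, and there is no reason to expect it for the full matrix of $C_\phi$ when $\phi(0)\neq0$. The paper does not attempt this either. Its general statement (Theorem~3.10) is formulated \emph{under the hypothesis} $\phi(0)=0$; the passage to arbitrary $\phi$ is handled by your option~(a), pre-composing with the automorphism $\phi_a$ sending $0$ to $\phi(0)$, which on each of the three spaces gives $\|C_{\phi_a}\|^{-1}a_n(C_\psi)\le a_n(C_\phi)\le\|C_{\phi_a}\|\,a_n(C_\psi)$. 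This is entirely sufficient for the applications (compactness, Schatten membership, the $s_{2n}\le\sqrt{s_1s_n}$ estimate), and you should simply invoke it rather than pursue option~(b) or the $\Psi_n'(0)$ heuristic.
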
 

It is understood that if $C_\phi$ is not bounded on ${\mathcal D}^{\, 2}$, then $a_j^{{\mathcal D}^{\, 2}} (C_\phi) = + \infty$.
\smallskip\goodbreak

As a consequence, we recover a previous result (\cite[Corollary~3.2]{LLQR-JFA2013}; see also \cite[Theorem~2.5]{IsabelleII}).
\goodbreak

\begin{corollary}
For any symbol $\phi$, we have:
\begin{itemize}
\setlength\itemsep {-0.1 em}
\item [$1)$ $a)$] if $C_\phi$ is compact on ${\mathcal D}^{\, 2}$, then $C_\phi$ is compact on $H^2$; 

\item [\phantom{$1)$} $b)$] if $C_\phi$ is compact on $H^2$, then $C_\phi$ is  compact on ${\mathfrak B}^2$.
\end{itemize}
Moreover, for every $p > 0$, we have:
\begin{itemize}
\setlength\itemsep {-0.1 em}
\item [$2)$ $a)$] if $C_\phi \in S_p ({\mathcal D}^{\, 2})$, then $C_\phi \in S_p (H^2)$;

\item [\phantom{$2)$} $b)$] if $C_\phi \in S_p (H^2)$, then $C_\phi \in S_p ({\mathfrak B}^2)$.
\end{itemize}
\end{corollary}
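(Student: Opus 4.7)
The plan is to read off both parts of the corollary directly from Theorem~\ref{main result} via the standard identifications: $C_\phi$ is compact on a given Hilbert space iff $a_n (C_\phi) \to 0$, and $C_\phi$ lies in $S_p$ iff $\sum_n [a_n (C_\phi)]^p < \infty$. I describe only the passage $H^2 \rightarrow {\mathfrak B}^2$ in detail, the passage ${\mathcal D}^{\, 2} \rightarrow H^2$ being word-for-word the same (and trivially true when $C_\phi$ is unbounded on ${\mathcal D}^{\, 2}$ by the convention announced after the theorem).

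For the compactness implication $1)\,b)$, the key observation is that the sequence $\big(a_j^{{\mathfrak B}^2} (C_\phi)\big)_{j \geq 1}$ is non-increasing, so its $n$th term is the smallest among the first $n$ and is therefore bounded by their geometric mean. Combining this with the product inequality of Theorem~\ref{main result} gives
\[ a_n^{{\mathfrak B}^2} (C_\phi) \leq \Big( \prod_{j = 1}^n a_j^{{\mathfrak B}^2} (C_\phi) \Big)^{1/n} \leq \Big( \prod_{j = 1}^n a_j^{H^2} (C_\phi) \Big)^{1/n} . \]
Assuming $a_j^{H^2} (C_\phi) \to 0$, the right-hand side tends to $0$ by Cauchy's classical theorem on the limit of the geometric mean (equivalently, Cesàro applied after taking logarithms), and so $C_\phi$ is compact on ${\mathfrak B}^2$.

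For the Schatten implication $2)\,b)$, the natural move is to take logarithms, which turns the product inequality into the weak submajorization
\[ \sum_{j = 1}^n \log a_j^{{\mathfrak B}^2} (C_\phi) \leq \sum_{j = 1}^n \log a_j^{H^2} (C_\phi) \, , \qquad n \geq 1 , \]
of two non-increasing (extended-real) sequences. The Hardy--Littlewood--P\'olya majorization principle then provides
\[ \sum_{j \geq 1} \Phi \big( \log a_j^{{\mathfrak B}^2} (C_\phi) \big) \leq \sum_{j \geq 1} \Phi \big( \log a_j^{H^2} (C_\phi) \big) \]
for every non-decreasing convex $\Phi$. Taking $\Phi (x) = \e^{p x}$, which is non-decreasing and convex for any $p > 0$, yields
\[ \sum_{j \geq 1} \big[ a_j^{{\mathfrak B}^2} (C_\phi) \big]^p \leq \sum_{j \geq 1} \big[ a_j^{H^2} (C_\phi) \big]^p , \]
so $C_\phi \in S_p (H^2)$ forces $C_\phi \in S_p ({\mathfrak B}^2)$, as desired.

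There is no real obstacle once Theorem~\ref{main result} is at our disposal: the whole corollary is a standard Weyl-type majorization argument, the only mild care being to allow some $\log a_j^{{\mathfrak B}^2} (C_\phi)$ to take the value $-\infty$ (harmless, since those indices contribute $0$ on both sides when one applies $\Phi (x) = \e^{p x}$).
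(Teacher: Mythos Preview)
Your proof is correct and follows essentially the same approach as the paper. The paper packages the argument through the formal notion of log-subordination (Propositions~\ref{image subordin} and~\ref{log-subordin}, Corollary~\ref{coro log-subordin}), but the content is identical: the Schatten implication is exactly the Hardy--Littlewood--P\'olya step with $\Phi(x)=\e^{px}$ that you invoke, and for compactness the paper uses the slightly different bound $u_{2n}\le\sqrt{v_1 v_n}$ in place of your geometric-mean/Ces\`aro argument, both of which are immediate consequences of the product inequality in Theorem~\ref{main result}.
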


Items $1)$~$a)$ and $2)$~$a)$ are not sharp, since we proved in \cite[Theorem~2.9]{LLQR-JFA2013} that if $C_\phi$ is compact on ${\mathcal D}^{\, 2}$, 
then $C_\phi$ belongs to all Schatten classes $S_p (H^2)$, with $p > 0$. However, we will see in Section~\ref{Hardy-Bergman} that the item $1)$~$b)$ is sharp,  
but $2)$~$b)$ is not. 
\smallskip

We will prove below these results in a more general setting in Theorem~\ref{theo compos op}.

%%%%%%%%%%%%%%%%%%%%%%
\subsection {Subordination of sequences}

Let ${\mathcal S}$ be the set of non-increasing sequences $u = (u_j)_{j \geq 1}$ of real numbers. If $u, v \in {\mathcal S}$, the sequence $u$ is said to be 
\emph{subordinate} to the sequence $v$, and we write $u \prec v$, if:
\begin{equation} \label{subordin}
\qquad \qquad \sum_{j = 1}^n u_j \leq \sum_{j = 1}^n v_j  \qquad \text{for all } n \geq 1 \, .
\end{equation} 

For example, if $u = (1, 1, 0, 0, \ldots)$ and $v = (2, 0, 0, \ldots)$, we have $u \prec v$.
\smallskip

We have this basic stability property of this notion (see \cite[Theorem~1.16, p.~13]{Simon}).

\begin{proposition} \label{image subordin}
Let $I$ be an interval of $\R$ and $h \colon I \to \R$ be increasing and convex. Then, if $u, v \in {\mathcal S}$ are sequences of numbers in $I$, 
we have:
\begin{displaymath} 
u \prec v \quad \Longrightarrow \quad h (u) \prec h (v) \, .
\end{displaymath} 
\end{proposition}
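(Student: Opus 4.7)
The plan is to fix $n \ge 1$ and prove $\sum_{j = 1}^n h(u_j) \le \sum_{j = 1}^n h(v_j)$; since $h$ is non-decreasing, $h(u)$ and $h(v)$ both still lie in ${\mathcal S}$, so this really is the subordination \eqref{subordin} for $h(u)$ and $h(v)$.

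The first step is to establish the pointwise truncation inequality
\begin{equation*}
\sum_{j = 1}^n (u_j - t)_+ \le \sum_{j = 1}^n (v_j - t)_+ \qquad \text{for every } t \in \R \, .
\end{equation*}
The key observation is that, for any non-increasing sequence $w = (w_j)$, the numbers $w_j - t$ are non-increasing in $j$, so the sum of the positive terms coincides with the best partial sum:
\begin{equation*}
\sum_{j = 1}^n (w_j - t)_+ = \max_{0 \le m \le n} \Big( \sum_{j = 1}^m w_j - m t \Big) \, .
\end{equation*}
Applying this to $u$ and $v$ and invoking $\sum_{j = 1}^m u_j \le \sum_{j = 1}^m v_j$ for each $m$, taking the maximum over $m$ yields the truncation lemma directly.

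The second step passes from this truncation inequality to a general $h$ through the integral representation of an increasing convex function. Picking $a \in I$ below all the (finitely many) values $u_j, v_j$, one has
\begin{equation*}
h(x) = h(a) + h'_+ (a) (x - a) + \int_{[a, + \infty)} (x - t)_+ \, d \mu (t) \qquad (x \in I, \ x \ge a) \, ,
\end{equation*}
where $\mu \ge 0$ is the Borel measure on $I$ representing the distributional second derivative of $h$. Evaluating at $x = u_j$ and $x = v_j$, summing over $j = 1, \ldots, n$, and applying Fubini, the difference $\sum_j h(v_j) - \sum_j h(u_j)$ splits as
\begin{equation*}
h'_+ (a) \Big( \sum_{j = 1}^n v_j - \sum_{j = 1}^n u_j \Big) + \int_{[a, + \infty)} \Big( \sum_{j = 1}^n (v_j - t)_+ - \sum_{j = 1}^n (u_j - t)_+ \Big) \, d \mu (t) \, .
\end{equation*}
The first term is non-negative because $h'_+ (a) \ge 0$ (as $h$ is increasing) and $u \prec v$ applied at rank $n$; the second term is non-negative by the truncation lemma. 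This gives the desired conclusion.

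The only genuine technical point is the integral representation of $h$, which is a standard fact about convex functions on an interval. Once that is granted, no further obstacle arises: everything reduces to the truncation inequality of the first step, itself a one-line consequence of the $\max$-formula for the positive-part sum of a non-increasing sequence.
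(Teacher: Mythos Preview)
Your proof is correct and follows essentially the same route as the paper's: both rest on the integral representation of an increasing convex function together with the truncation inequality $\sum_{j=1}^n (u_j - t)_+ \le \sum_{j=1}^n (v_j - t)_+$. The paper simplifies by assuming $h \in C^2$ and merely asserts the truncation inequality, whereas you work directly with a general convex $h$ via its distributional second derivative and supply the clean $\max$-formula justification for the truncation step.
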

\begin{proof}
We recall a two-lines proof. We may assume that $h$ is ${\cal C}^2$. We fix $n \geq 1$ and set $a = \min \{u_n, v_n \}$. Then, for $x \in I$ and $x > a$:
\begin{displaymath} 
h (x) = h (a) + (x - a) h ' (a) + \int_a^{+ \infty} (x - t)^+ h'' (t) \, dt \, .
\end{displaymath} 
One easily checks, using \eqref{subordin}, that $\sum_{j = 1}^n (u_j - t)^+ \leq \sum_{j = 1}^n (v_j - t)^+$ for all $t \geq a$. Hence, thanks to the positivity 
of $h' (a)$ and $h''$:
\begin{align*}
\sum_{j = 1}^n h (u_j) 
& = n h (a) + h ' (a) \sum_{j = 1}^n (u_j - a) + \int_a^{+ \infty} \sum_{j = 1}^n (u_j - t)^+ h '' (t) \, dt \\
& \leq n h (a) + h ' (a) \sum_{j = 1}^n (v_j - a) + \int_a^{+ \infty} \sum_{j = 1}^n (v_j - t)^+ h '' (t) \, dt \\
& = \sum_{j = 1}^n h (v_j) \, . \qedhere
\end{align*}
\end{proof}

A stronger notion is that of log-subordination.
\begin{definition}
We say that the sequence $u \in {\mathcal S}$ of positive numbers is \emph{log-subordinate} to the sequence $v \in {\mathcal S}$ of positive numbers 
if $\log u \prec \log v$. In other terms, if:
\begin{displaymath} 
\qquad\qquad \prod_{j = 1}^n u_j \leq \prod_{j = 1}^n v_j \qquad \text{for all } n \geq 1 \, .
\end{displaymath} 
\end{definition}

The following result will be useful.

\begin{proposition} \label{log-subordin}
For sequences of positive numbers $u, v \in {\mathcal S}$, the following two conditions are equivalent:
\begin{itemize}
\setlength\itemsep {-0.1 em}
\item [$1)$] $\log u \prec \log v$;

\item [$2)$] $u^p \prec v^p$ for all $p > 0$.
\end{itemize}
\end{proposition}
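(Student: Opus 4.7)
The plan is to prove the two implications separately, with the forward direction being a direct application of Proposition~\ref{image subordin} and the reverse direction relying on a limiting argument as $p \to 0^+$.

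For $1) \Rightarrow 2)$, I would fix $p > 0$ and apply Proposition~\ref{image subordin} to the function $h \colon \R \to \R$ defined by $h(x) = \e^{p x}$, which is increasing and convex. Since $u$ and $v$ are non-increasing sequences of positive numbers, $\log u$ and $\log v$ belong to ${\mathcal S}$, and by hypothesis $\log u \prec \log v$. The proposition gives $h(\log u) \prec h(\log v)$, i.e. $u^p \prec v^p$. This step is immediate.

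For $2) \Rightarrow 1)$, I would fix $n \geq 1$ and start from the inequality $\sum_{j=1}^n u_j^p \leq \sum_{j=1}^n v_j^p$, valid for every $p > 0$. Subtracting $n$ from both sides and dividing by $p > 0$ yields
\begin{displaymath}
\sum_{j = 1}^n \frac{u_j^p - 1}{p} \leq \sum_{j = 1}^n \frac{v_j^p - 1}{p} \, .
\end{displaymath}
Using that $(x^p - 1)/p \to \log x$ as $p \to 0^+$ for every $x > 0$, letting $p \to 0^+$ in both finite sums gives $\sum_{j=1}^n \log u_j \leq \sum_{j=1}^n \log v_j$, which is the desired log-subordination at index $n$. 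Since $n$ is arbitrary, $\log u \prec \log v$.

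The only subtle point is justifying the passage to the limit, but since each sum has only finitely many terms, it is simply the pointwise limit $(x^p - 1)/p \to \log x$ applied termwise, requiring no uniformity. I do not anticipate any serious obstacle; both directions are short and the proof is essentially a translation between multiplicative and additive statements via the exponential/logarithm.
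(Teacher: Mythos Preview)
Your proof is correct and follows essentially the same approach as the paper: the forward direction is identical (apply Proposition~\ref{image subordin} with $h(x)=\e^{px}$), and the reverse direction is the same $p\to 0^+$ limiting argument, with the only cosmetic difference that you use $(x^p-1)/p\to\log x$ termwise while the paper phrases it via the power-mean limit $\big(\tfrac{1}{n}\sum_{j=1}^n x_j^p\big)^{1/p}\to\big(\prod_{j=1}^n x_j\big)^{1/n}$.
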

\begin{proof}
If $\log u \prec \log v$, it suffices to apply Proposition~\ref{image subordin} to the sequences $\log u$ and $\log v$ and to the function $h (x) = \e^{p x}$ 
to get $u^p \prec v^p$. 

Conversely, if $u^p \prec v^p$ for all $p > 0$, we have:
\begin{displaymath} 
\bigg( \frac{1}{n} \sum_{j = 1}^n u_j^p \bigg)^{1 / p} \leq \bigg( \frac{1}{n} \sum_{j = 1}^n v_j^p \bigg)^{1 / p} \, ,
\end{displaymath} 
and letting $p$ going to $0$, we get:
\begin{displaymath} 
\bigg( \prod_{j = 1}^n u_j \bigg)^{1 /n} \leq \bigg( \prod_{j = 1}^n v_j \bigg)^{1 /n} \, ,
\end{displaymath} 
i.e. $\log u \prec \log v$.
\end{proof}
\begin{corollary} \label{coro log-subordin}
Let $u, v \in {\mathcal S}$ be two sequences of positive numbers such that $u$ is log-subordinate to $v$. Then for $N \geq n$:
\begin{equation} \label{eq 1 log-sub}
u_N \leq v_1^{n / N} v_n^{1 - n / N} \, .
\end{equation} 
In particular, for any $n \geq 1$:
\begin{equation} \label{eq 2 log-sub}
u_{2 n} \leq \sqrt{v_1 v_n} \, .
\end{equation} 
\end{corollary}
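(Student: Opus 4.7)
The second inequality is a direct specialization of the first: taking $N = 2n$ gives $n/N = 1/2$, whence $u_{2n} \le v_1^{1/2} v_n^{1/2} = \sqrt{v_1 v_n}$. So the only thing to prove is the first inequality \eqref{eq 1 log-sub}.

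The plan is to sandwich $u_N^N$ between a product of $u_j$'s and a product of $v_j$'s using two simple monotonicity facts. First, because $u \in {\mathcal S}$ is non-increasing, $u_N \leq u_j$ for every $j \leq N$, so
\begin{displaymath}
u_N^N \;\leq\; \prod_{j=1}^N u_j .
\end{displaymath}
Next, log-subordination gives $\prod_{j=1}^N u_j \leq \prod_{j=1}^N v_j$. Finally, split the product over $v$ at index $n$: since $v$ is non-increasing, $v_j \leq v_1$ for $j \leq n$ and $v_j \leq v_n$ for $n+1 \leq j \leq N$, yielding
\begin{displaymath}
\prod_{j=1}^N v_j \;=\; \prod_{j=1}^n v_j \cdot \prod_{j=n+1}^N v_j \;\leq\; v_1^{n} \, v_n^{N-n} .
\end{displaymath}

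Combining the three displays gives $u_N^N \leq v_1^n v_n^{N-n}$, and taking the $N$-th root produces the desired estimate $u_N \leq v_1^{n/N} v_n^{1 - n/N}$. There is really no obstacle: the argument is a one-line use of the non-increasing character of both sequences together with the definition of log-subordination, and needs nothing beyond Proposition~\ref{log-subordin} (in fact, not even that, only the product form of the definition).
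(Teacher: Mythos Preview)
Your proof is correct and follows exactly the same approach as the paper: bound $u_N^N$ above by $\prod_{j=1}^N u_j$ using monotonicity, pass to $\prod_{j=1}^N v_j$ by log-subordination, then split this product at $n$ and bound the two pieces by $v_1^n v_n^{N-n}$, with the case $N=2n$ giving the second inequality. The only difference is that you spell out each step in words where the paper writes a single chain of inequalities.
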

\begin{proof}
We have:
\begin{displaymath} 
u_N^N \leq \prod_{j = 1}^N u_j \leq \prod_{j = 1}^N v_j = \prod_{j = 1}^n v_j \prod_{j = n + 1}^N v_j \leq v_1^n v_n^{N - n} \, ,
\end{displaymath} 
and \eqref{eq 1 log-sub} follows. Now, the choice $N = 2 n$ gives \eqref{eq 2 log-sub}.
\end{proof}

Note that the choice $N = [n \log n]$ can be useful (see \cite{Bayart-Queff-Seip}).

%%%%%%%%%%%%%%%%%%%%%%
\subsection {Singular numbers} 

The following Weyl type result is crucial for the proof of our main result. It is certainly known by specialists, but we have not found any reference.

\begin{proposition} \label{s_1, ..., s_n}
Let $T$ be a compact operator on a separable complex Hilbert space $H$ and $T = \sum_{j = 1}^\infty s_j \, u_j \otimes v_j$ its Schmidt decomposition. 
Then, for every integer $n \geq 1$:
\begin{displaymath}
s_1 \cdots s_n = \max \big| \det \big( \langle T f_j \mid g_i \rangle \big)_{i, j} \big| \, ,
\end{displaymath}
where the supremum is taken over all pairs $(f_j)_{1 \leq j \leq n}$ and $(g_i)_{1 \leq i \leq n}$ of orthonormal systems of length $n$ in $H$.
\end{proposition}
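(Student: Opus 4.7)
The plan is to prove the identity by establishing one inequality in each direction, with the supremum being attained at the Schmidt vectors themselves.

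For the lower bound, I would simply test the right-hand side on the distinguished orthonormal systems coming from the Schmidt decomposition: take $f_j = v_j$ and $g_i = u_i$ for $1 \le i, j \le n$. Using orthonormality of $(u_k)$ and $(v_k)$, we get
\begin{displaymath}
\langle T f_j \mid g_i \rangle = \Big\langle \sum_{k=1}^\infty s_k \langle v_j \mid v_k\rangle u_k \,\Big|\, u_i\Big\rangle = s_j \delta_{ij},
\end{displaymath}
so the matrix is diagonal with entries $s_1, \dots, s_n$ and its determinant is exactly $s_1 \cdots s_n$. This shows that the maximum is at least $s_1 \cdots s_n$ (and, incidentally, that it is attained).

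For the upper bound, let $(f_j)_{1 \le j \le n}$ and $(g_i)_{1 \le i \le n}$ be arbitrary orthonormal systems. Define isometric embeddings $F, G \colon \C^n \to H$ by sending the canonical basis $(e_k)$ to $(f_k)$ and $(g_k)$ respectively. Then the matrix $M = \big( \langle T f_j \mid g_i \rangle \big)_{i,j}$ is exactly the matrix of $G^\ast T F \colon \C^n \to \C^n$ in the canonical basis. Since
\begin{displaymath}
|\det M|^2 = \det(M^\ast M) = \prod_{k=1}^n s_k(M)^2,
\end{displaymath}
it suffices to show that $\prod_{k=1}^n s_k(M) \leq \prod_{k=1}^n s_k(T)$. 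But $F$ and $G^\ast$ have norm $1$, so the ideal property $s_k (A T B) \leq \|A\| s_k (T) \|B\|$ recalled in Section~2 gives $s_k(M) = s_k(G^\ast T F) \leq s_k(T)$ for every $k$, and the product inequality follows at once.

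I do not foresee a genuine obstacle: the only mild subtlety is ensuring that the formula $|\det M| = \prod s_k (M)$ is applied to the $n \times n$ matrix $M$ (not to $T$ itself), so that the factorization $M = G^\ast T F$ is square and the determinant is well-defined. Once this is in place, combining the two inequalities yields the equality and shows that the supremum is in fact a maximum, attained at the Schmidt systems.
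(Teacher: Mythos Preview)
Your proof is correct and follows essentially the same approach as the paper: the lower bound via the Schmidt vectors is identical, and for the upper bound both arguments factor the matrix $M$ through $T$ via isometries/projections, identify $|\det M|$ with $\prod_{k=1}^n s_k(M)$, and then invoke the ideal property $s_k(G^\ast T F)\le s_k(T)$. The paper takes a slight detour by first treating the case $\dim H = n$ (observing that the determinant is actually constant there) before reducing the general case to it via orthogonal projections, whereas you go directly; but this is a cosmetic difference, not a genuinely different route.
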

\begin{proof}
First, assume that $H$ is $n$-dimensional. We may assume that $H = \ell_2^n$ and we denote $(e_i)_{1 \leq i \leq n}$ its canonical basis. 

Since $T (v_j) = s_j u_j$, we have $\det \big( \langle T v_j \mid u_i \rangle \big)_{i, j} =s_1\cdots s_n$.

Now, if $(f_j)_{1 \leq j \leq n}$ and $(g_i)_{1 \leq i \leq n}$ are two orthonormal systems, we consider the following diagram:
\begin{displaymath}
\ell_2^n \converge^{U} \ell_2^n \converge^{T} \ell_2^n \converge^{V} \ell_2^n 
\end{displaymath}
where $U$, $V$ are the unitary operators defined by:
\begin{displaymath}
U \Big( \sum_{j = 1}^n t_j e_j \Big) = \sum_{j = 1}^n t_j f_j \quad \text{and} \quad V x = \sum_{j = 1}^n \langle x \mid g_j \rangle \, e_j \, .
\end{displaymath}
We observe that
\begin{displaymath}
\langle V T U e_j \mid e_i \rangle = \langle T U e_j \mid V^\ast e_i \rangle = \langle T f_j \mid g_i \rangle \, ,
\end{displaymath}
so that:
\begin{displaymath}
\big| \det \big( \langle T f_j \mid g_i \rangle \big)_{i, j} \big| = | \det V | \, |\det T | \, |\det U| = |\det T | = s_1 \cdots s_n \, .
\end{displaymath}

In the general case, denote by $P_n$ and $Q_n$ the orthogonal projections onto $F_n := [f_1, \ldots , f_n]$ and $G_n := [g_1, \ldots, g_n]$ respectively. 
We can see $F_n$ and $G_n$ as isometric copies of $\ell_2^n$. Observe that $\langle T f_j \mid g_i\rangle = \langle Q_n T P_n f_j \mid g_i\rangle$. By the 
above special case, we get, using the ideal property of singular numbers:
\begin{displaymath}
\big| \det \big( \langle T f_j \mid g_i \rangle \big)_{i, j} \big| = \prod_{j = 1}^n s_j (Q_n T P_n) \leq \prod_{j = 1}^n s_j (T) \, . \qedhere
\end{displaymath}
\end{proof}
%

%%%%%%%%%%%%%%%%%%%%%%
\subsection {Comparison principle for operators} \label{subsection operators}

For convenience, we say that an operator $U \colon H \to K$ between Hilbert spaces is \emph{unitary} if it is a surjective isometry, even if $H \neq K$.
\smallskip

V.~\`E~Kacnel'son (\cite{Kacnelson}) proved the following result.

\begin{theorem} [V.~\`E~Kacnel'son] \label{theo Kacnelson} 
Let $H$ be a separable complex Hilbert space and $(e_i)_{i \geq 0}$ a fixed orthonormal basis of $H$. 
Let $A \colon H \to H$ be a bounded linear operator. We assume that the matrix of $A$ with respect to this basis is lower-triangular: 
$\langle A e_j \mid e_i \rangle = 0$ for $i < j$.

Let $(d_j)_{j \geq 0}$ be an increasing sequence of positive real numbers and $D$ the (possibly unbounded) diagonal operator such that 
$D (e_j) = d_j e_j$, $j \geq 0$. Then the operator $D^{ - 1} A D \colon H \to H$ is bounded and moreover:
\begin{equation} \label{ineq Kacnelson}
\| D^{- 1} A D \| \leq \| A \| \, .
\end{equation}
\end{theorem}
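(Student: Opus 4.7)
The plan is to reformulate the operator inequality $\|D^{-1} A D\| \leq \|A\|$ as a positivity statement for a bounded positive diagonal operator, and then exploit the lower-triangular structure of $A$ via a layer-cake decomposition of that diagonal.

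First I would reduce the statement. Setting $B = D^{-1} A D$, the bound $\|B\| \leq \|A\|$ is equivalent to $B^\ast B \leq \|A\|^2 \, I$, i.e.\ $D A^\ast D^{-2} A D \leq \|A\|^2 \, I$. Since $d_0 > 0$, the operator $D^{-1}$ is bounded, so conjugating both sides by $D^{-1}$ this is equivalent to
$$A^\ast X A \leq \|A\|^2 \, X, \qquad X := D^{-2}.$$
The diagonal operator $X$ is bounded with diagonal entries $x_i = d_i^{-2}$ that are \emph{decreasing}, since $(d_i)$ is increasing. So the theorem reduces to the following assertion: for every bounded lower-triangular $A$ and every bounded positive diagonal $X$ with decreasing entries, one has $A^\ast X A \leq \|A\|^2 X$.

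For this inequality I would use a layer-cake argument: for every $w \in H$,
$$\langle X A w, A w \rangle = \sum_i x_i \, |(A w)_i|^2 = \int_0^{\|X\|} \Bigg( \sum_{i \, : \, x_i > t} |(A w)_i|^2 \Bigg) \, dt.$$
Because $(x_i)$ is decreasing, the set $\{i : x_i > t\}$ is an initial segment $\{0, 1, \dots, N(t)\}$, so the inner sum equals $\|P_{N(t)} A w\|^2$, where $P_N$ denotes the orthogonal projection onto $[e_0, \dots, e_N]$. The essential observation is that lower-triangularity of $A$ makes each $(I - P_N) H = [e_{N+1}, e_{N+2}, \dots]$ an $A$-invariant subspace, hence $P_N A (I - P_N) = 0$, i.e.\ $P_N A = P_N A P_N$. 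This yields $\|P_N A w\| \leq \|A\| \, \|P_N w\|$ for every $N$ and every $w$, which is the crux of the argument: the compatibility between the decreasing diagonal $X$ and the lower-triangular $A$ is precisely what allows $\|A\|$ to be transferred past the truncation. Running the same layer-cake on $\langle X w, w \rangle$ and integrating the resulting pointwise-in-$t$ inequality gives $A^\ast X A \leq \|A\|^2 X$.

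To conclude, I would substitute $w = D v$ for $v \in \mathrm{Dom}(D)$ (which is dense in $H$) and read back:
$$\|B v\|^2 = \langle D^{-2} A D v, A D v \rangle = \langle X A w, A w \rangle \leq \|A\|^2 \langle X w, w \rangle = \|A\|^2 \|v\|^2,$$
so $B$ extends uniquely to a bounded operator on $H$ with $\|B\| \leq \|A\|$. The main conceptual step I expect to be the real obstacle is identifying the equivalent reformulation $A^\ast X A \leq \|A\|^2 X$ together with the layer-cake idea: once one sees that a decreasing positive diagonal unfolds into a family of initial-segment projections, the lower-triangularity of $A$ takes care of the rest. The possible unboundedness of $D$ is only a technical matter, since $D^{-1}$ is bounded and all manipulations are legitimate on the dense domain $\mathrm{Dom}(D)$.
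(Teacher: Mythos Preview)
Your proof is correct. Note, however, that the paper does not actually prove this theorem: it states it as a known result of Kacnel'son and cites the original reference. The closest thing in the paper to a proof strategy for this statement is the proof of the extension Theorem~\ref{ext theo Kacnelson}, which proceeds by complex interpolation: one considers the analytic family $z \mapsto D^{-z} A_N D^{z}$ on the closed right half-plane, shows that $z \mapsto \prod_{j=1}^{n} s_j\big(D^{-z} A_N D^{z}\big)$ is subharmonic and bounded, and applies the maximum principle. For $n = 1$ this yields $\|D^{-1} A_N D\| \leq \|A_N\| \leq \|A\|$; but to pass from the truncations $A_N$ to $A$, the paper explicitly invokes Kacnel'son's inequality itself, so the paper's argument is not an independent proof of the base case.

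Your route is genuinely different and more elementary. You recast the inequality as $A^{\ast} X A \leq \|A\|^2 X$ for the bounded positive diagonal $X = D^{-2}$ with decreasing entries, and then observe that the layer-cake decomposition writes $X$ as a superposition of initial-segment projections $P_{N(t)}$. The lower-triangularity of $A$ gives $P_N A = P_N A P_N$, hence $\|P_N A w\| \leq \|A\|\,\|P_N w\|$, and integrating in $t$ finishes the job. This avoids complex analysis entirely, applies directly to any bounded (not necessarily compact) $A$, and needs no limiting procedure in $N$ beyond the routine density of $\mathrm{Dom}(D)$. The complex-interpolation viewpoint, on the other hand, is what allows the paper to reach the stronger log-subordination conclusion for products of singular values; your argument, as it stands, does not obviously generalize to that.
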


In \cite{Isabelle}, this theorem was extended in the framework of Banach spaces with $1$-unconditional basis and used for the study of composition operators, 
and in \cite{IsabelleII} to compare the Schatten-class norms of weighted Hilbert spaces of analytic functions. 
\smallskip

We have the following generalization (the case $n = 1$ giving $\| D^{- 1} A D \| \leq \| A \|$).

\begin{theorem} \label{ext theo Kacnelson} 
With the notation of Theorem~\ref{theo Kacnelson}, and assuming moreover that $A$ is compact, we have, for every $n \geq 1$:
\begin{equation}
\prod_{j = 1}^n s_j (D^{- 1} A D) \leq \prod_{j = 1}^n s_j (A) \, .
\end{equation}
\end{theorem}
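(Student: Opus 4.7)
The plan is to apply Kacnel'son's theorem (Theorem~\ref{theo Kacnelson}) on the $n$-th exterior power $\Lambda^n H$ rather than on $H$ itself. For any compact operator $T$ on $H$, the induced map $\Lambda^n T$ has singular values $\{s_{i_1}(T)\cdots s_{i_n}(T) \tq i_1 < \cdots < i_n\}$, so $\|\Lambda^n T\| = \prod_{j=1}^n s_j(T)$; moreover $\Lambda^n(D^{-1}AD) = (\Lambda^n D)^{-1}(\Lambda^n A)(\Lambda^n D)$ by functoriality. The desired inequality is thus reduced to
$$\| (\Lambda^n D)^{-1} (\Lambda^n A) (\Lambda^n D) \| \leq \|\Lambda^n A\|,$$
which is precisely an instance of Kacnel'son's inequality, provided that $\Lambda^n H$ can be equipped with an orthonormal basis in which $\Lambda^n D$ is diagonal with non-decreasing entries and $\Lambda^n A$ is lower-triangular.

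The natural orthonormal basis of $\Lambda^n H$ is $\{e_I := e_{i_1}\wedge\cdots\wedge e_{i_n} \tq I = \{i_1 < \cdots < i_n\} \subset \N\}$, in which $\Lambda^n D$ is diagonal with eigenvalues $d_I := d_{i_1}\cdots d_{i_n}$. The key computation is
$$\langle \Lambda^n A \, e_J \mid e_I \rangle = \det\big(\langle A e_{j_l} \mid e_{i_k}\rangle\big)_{1 \leq k,l \leq n}.$$
Since $\langle A e_j \mid e_i\rangle = 0$ for $i < j$, a surviving term of the Leibniz expansion requires a permutation $\sigma$ with $i_{\sigma(l)} \geq j_l$ for every $l$. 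A short counting argument (if $i_{k_0} < j_{k_0}$ then the first $k_0$ of the $i_l$ cannot accommodate the $n - k_0 + 1$ indices $j_{k_0}, \ldots, j_n$) then shows that this forces the componentwise domination $i_k \geq j_k$ for every $k$, which I shall denote $I \succeq J$.

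Because $(d_j)$ is strictly increasing and positive, $I \succeq J$ with $I \neq J$ implies $d_I > d_J$. Sorting the multi-indices $I$ by the value of $d_I$, breaking ties using any linear extension of $\succeq$, yields a total order on the basis in which $\Lambda^n D$ has non-decreasing diagonal and $\Lambda^n A$ is lower-triangular. If coincidences $d_I = d_{I'}$ occur for distinct $I,I'$, a perturbation $d_j \mapsto d_j + \epsilon \delta_j$ with $(\delta_j)$ strictly increasing restores strict monotonicity of all products, and the case $\epsilon \to 0$ is handled by strong convergence (recall that $\Lambda^n A$ remains compact). Invoking Kacnel'son's theorem for $(\Lambda^n A, \Lambda^n D)$ on $\Lambda^n H$ then delivers the required inequality.

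The main conceptual step is the combinatorial lemma identifying the zero pattern of $\Lambda^n A$ with the partial order $\succeq$; this is where the triangular structure of $A$ is really used. On the technical side one only needs: $(a)$ the identity $\|\Lambda^n T\| = \prod_{j=1}^n s_j(T)$ for compact $T$, which follows from computing the Schmidt decomposition of $\Lambda^n T$ from that of $T$; $(b)$ the functoriality $\Lambda^n(ST) = \Lambda^n(S)\Lambda^n(T)$ and $\Lambda^n(D^{-1}) = (\Lambda^n D)^{-1}$; and $(c)$ the harmless extension of Kacnel'son's inequality from strictly to non-strictly increasing diagonals.
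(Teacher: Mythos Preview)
Your argument is correct and coincides with the paper's \emph{alternative} proof: pass to $\Lambda^n H$, use $\|\Lambda^n T\| = \prod_{j=1}^n s_j(T)$ and functoriality to write $\Lambda^n(D^{-1}AD) = \Delta^{-1}(\Lambda^n A)\,\Delta$ with $\Delta = \Lambda^n D$ diagonal, verify that $\Lambda^n A$ is lower-triangular with respect to the diagonal values $\delta_I = \prod_k d_{i_k}$, and invoke Kacnel'son on $\Lambda^n H$. The paper's triangularity check is slightly shorter than yours: from a surviving permutation term with $i_{\sigma(k)} \geq j_k$ it concludes directly that $\delta_I = \prod_k d_{i_{\sigma(k)}} \geq \prod_k d_{j_k} = \delta_J$, which is all Kacnel'son needs. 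Your componentwise domination $i_k \geq j_k$ is true but stronger than required, and once you have it your perturbation step becomes superfluous: if $(d_j)$ is strictly increasing and $\delta_I = \delta_{I'}$ with $I \neq I'$, then $I$ and $I'$ are $\succeq$-incomparable, so both off-diagonal entries in that block vanish automatically.

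You may also like to know that the paper's \emph{primary} proof avoids exterior powers altogether. It truncates to $A_N = P_N A P_N$, forms the analytic family $A_N(z) = D^{-z} A_N D^{z}$ on the closed right half-plane, and shows that $u(z) = \prod_{j=1}^n s_j\big(A_N(z)\big)$ is bounded, continuous and subharmonic (via the identity $\prod_{j \leq n} s_j(T) = \sup_{(f),(g)} \big|\det\langle T f_j, g_i\rangle\big|$, so that $u$ is a supremum of moduli of holomorphic functions). On the imaginary axis $D^z$ is unitary, whence $u \leq \prod_j s_j(A)$ there; the maximum principle gives the same bound at $z = 1$, and one lets $N \to \infty$. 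This route is self-contained and does not rest on Kacnel'son's inequality as a black box.
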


In other words, the sequence $\big( s_j ( D^{- 1} A D) \big)_j$ is log-subordinate to $\big( s_j (A) \big)_j$. 

\begin{proof}
Let $\C_0$ be the right-half plane $\C_0 = \{z \in \C \tq \Re z > 0 \}$ and $H_N = {\rm span}\, \{ e_j \tq j \leq N \}$. We set:
\begin{displaymath}
a_{i, j} = \langle A e_j \mid e_i \rangle 
\end{displaymath}
and 
\begin{displaymath}
A_N = P_N A P_N \, ,
\end{displaymath}
where $P_N$ is the orthogonal projection from $H$ into $H$ with range $H_N$. 
We consider, for $z \in \overline{\C_0}$:
\begin{displaymath}
A_N (z) = D^{- z} A_N D^z \colon H \to H \, ,
\end{displaymath}
where $D^z (e_n) = d_n^z e_n$. 

If $\big( a_{i, j}^N (z) \big)_{i, j}$ is the matrix of $A_N (z)$ on the basis $\{e_j \tq j \geq 0 \}$ of $H$, we clearly have:
\begin{displaymath}
a_{i, j}^N (z) = \left\{
\begin{array}{ll}
a_{i, j} (d_j / d_i)^z & \text{ if } i , j \leq N \smallskip\\
0 & \text{ otherwise}  \, .
\end{array}
\right.
\end{displaymath}
In particular, we have, by hypothesis:
\begin{displaymath}
\qquad\qquad\quad | a_{i, j}^N (z) | \leq \sup_{k, l} |a_{k, l}| := M \, , \qquad \text{for all } z \in \overline{\C_0} \, .
\end{displaymath}
Since $\| A_N (z) \|^2 \leq \| A_N (z)\|_{HS}^2 = \sum_{i, j \leq N} |a_{i, j}^N (z)|^2 \leq (N + 1)^2 M^2$, we get:
\begin{displaymath}
\qquad \qquad \| A_N (z) \| \leq (N + 1) M \qquad \text{for all } z \in \overline{\C_0} \, .
\end{displaymath}

Let us consider the function $u \colon \overline{\C_0} \to \overline{\C_0}$ defined by:
\begin{equation}
u (z) = \prod_{j = 1}^n s_j \big( A_N (z) \big) \, .
\end{equation}
This function $u$ is continuous on $\overline{\C_0}$. 

If $\alpha$ denotes a pair $(f_j)$, $(g_i)$ of orthonormal systems of length $n$ of $H$, we set, for $z \in \overline{\C_0}$:
\begin{displaymath}
F_\alpha (z) = \det \big( \langle A_N (z) f_j \mid g_i \rangle \big)_{i, j} \, ,
\end{displaymath}
the function $F_\alpha$ is analytic in $\C_0$ and continuous on $\overline{\C_0}$. By Proposition~\ref{s_1, ..., s_n}, we have 
$u = \sup_\alpha |F_\alpha|$, so that $u$ is subharmonic in $\C_0$. Moreover:
\begin{displaymath}
\qquad\qquad\quad u (z) \leq \| A_N (z)\|^n \leq [(N + 1) M]^n \qquad \text{for } z \in \overline{\C_0} \, ,
\end{displaymath}
and:
\begin{displaymath}
\qquad\qquad\qquad u (z) = \prod_{j = 1}^n s_j (A_N) \leq \prod_{j = 1}^n s_j (A) \qquad \text{for } z \in \partial{\C_0} \, ,
\end{displaymath}
since the operator $D^z \colon H \to H$ is then unitary. Hence we can use the following form of the maximum principle.

\begin{theorem} [maximum principle] 
Let $\Omega$ be an arbitrary domain in $\C$, with $\Omega \neq \C$, and $u \colon \overline{\Omega} \to \R$ a function subharmonic in $\Omega$, 
and continuous and bounded above on $\overline{\Omega}$. Then:
\begin{displaymath}
\sup_{\overline \Omega} u = \sup_{\partial \Omega} u \, .
\end{displaymath}
\end{theorem}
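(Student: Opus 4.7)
The inequality $\sup_{\partial\Omega} u \le \sup_{\overline\Omega} u$ is immediate, so setting $M := \sup_{\partial\Omega} u$, which is finite since $u$ is bounded above, the task becomes to show $u(z) \le M$ for every $z \in \Omega$.

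When $\Omega$ is bounded, $\overline\Omega$ is compact and the continuous function $u$ attains its maximum at some $z^\ast \in \overline\Omega$. If $z^\ast \in \partial\Omega$ there is nothing to prove. If $z^\ast \in \Omega$, the strong maximum principle for subharmonic functions and the connectedness of $\Omega$ force $u$ to be constant on $\Omega$; by continuity it is then constant on $\overline\Omega$, so its value equals $u|_{\partial\Omega}$, which is at most $M$.

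For unbounded $\Omega$ I would run a Phragmén--Lindelöf-type argument. Since $\Omega \neq \C$, fix $z_0 \in \C \setminus \Omega$ and, for $\eps > 0$, consider
\[
v_\eps(z) := u(z) - \eps \log |z - z_0|.
\]
The function $-\log|z-z_0|$ is harmonic on $\Omega$, so $v_\eps$ is subharmonic there. Applying the bounded case (component by component) to $\Omega_R := \Omega \cap B(0,R)$, the supremum of $v_\eps$ on $\overline{\Omega_R}$ is realized on $\partial\Omega_R$. On $\partial\Omega \cap B(0,R)$ one has $v_\eps \le M + \eps\,|\log d|$ with $d := \mathrm{dist}(z_0,\partial\Omega)$ assumed positive; on $\Omega \cap \partial B(0,R)$ one has $v_\eps \le \sup_{\overline\Omega} u - \eps \log(R-|z_0|) \to -\infty$ as $R \to \infty$, using that $u$ is bounded above. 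Fixing $z \in \Omega$ and letting first $R \to \infty$ and then $\eps \to 0$ yields $u(z) \le M$.

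The main obstacle is the possibility that $\C \setminus \Omega$ has empty interior, for instance $\Omega = \C \setminus \R$: then every choice of $z_0$ sits on $\partial\Omega$, one has $d = 0$, and the estimate on $\partial\Omega \cap B(0,R)$ blows up near $z_0$. To salvage the argument one would exhaust $\Omega$ by $\Omega \setminus B(z_0,\delta)$ and let $\delta$ and $\eps$ tend to $0$ in a coordinated way, or replace $-\log|z-z_0|$ by an appropriate Green function of $\Omega$. In the intended application, where $\Omega = \C_0$ is the right half-plane, this pathology disappears entirely: the choice $z_0 = -1$ puts $z_0$ at distance $1$ from $\partial\Omega$, so the Phragmén--Lindelöf step is completely routine.
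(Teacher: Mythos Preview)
The paper does not prove this maximum principle; it quotes the statement and cites \cite[Theorem~15.1]{Bak-Newman} (for $u=|f|$ with $f$ holomorphic) and \cite[Theorem~5.16]{Hayman} (for general subharmonic $u$). So there is no in-house proof to compare against, and the question is simply whether your argument stands on its own.

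Your treatment of the bounded case is correct, and your Phragm\'en--Lindel\"of perturbation $v_\eps(z)=u(z)-\eps\log|z-z_0|$ is correct whenever one can choose $z_0\in\C\setminus\overline\Omega$, so that $d=\mathrm{dist}(z_0,\partial\Omega)>0$. In particular, the application in the paper uses $\Omega=\C_0$, where $z_0=-1$ gives $d=1$, and your argument is then complete and clean; nothing more is needed for the paper's purposes.

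Where there is a genuine gap is the degenerate case $\overline\Omega=\C$, and your two suggested repairs do not work as written. Exhausting by $\Omega\setminus \overline{B(z_0,\delta)}$ introduces the new boundary piece $\Omega\cap\partial B(z_0,\delta)$, on which you only get $v_\eps\le K+\eps\log(1/\delta)$ with $K=\sup_{\overline\Omega}u$; no coordinated choice of $\delta,\eps$ turns $K$ into $M$. Replacing $-\log|z-z_0|$ by the Green function of $\Omega$ fails for parabolic domains such as $\C\setminus\{0\}$, which have no Green function. The standard way to close the gap is to pass to the Riemann sphere and invoke the \emph{extended} maximum principle: since $u$ is bounded above and $\limsup_{z\to\zeta}u(z)\le M$ for every $\zeta\in\partial\Omega\subset\C$, and the only remaining boundary point $\infty$ is polar, it may be discarded and one concludes $u\le M$ on $\Omega$. (Equivalently: if $\C\setminus\Omega$ is polar, the removable-singularity theorem extends $u$ to a bounded-above subharmonic function on all of $\C$, which is constant by Liouville; if $\C\setminus\Omega$ is non-polar, your Phragm\'en--Lindel\"of argument goes through after a harmless reduction.) This is precisely the content of the Hayman--Kennedy reference the paper cites.
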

\noindent
This theorem is proved in \cite[Theorem~15.1, p.~190]{Bak-Newman} for $u = |f|$, with $f \colon \overline \Omega \to \C$ holomorphic in $\Omega$, 
and continuous and bounded on $\overline \Omega$, and in \cite[Theorem~5.16, p.~232]{Hayman}. It follows that:
\begin{displaymath}
\sup_{\Re z \geq 0} u (z) \leq \prod_{j = 1}^n s_j (A) \, . 
\end{displaymath}
In particular $u (1) \leq \prod_{j = 1}^n s_j (A)$, or else:
\begin{displaymath}
\prod_{j = 1}^n s_j (D^{- 1} A_N D) \leq \prod_{j = 1}^n s_j (A) \, .
\end{displaymath}

Now, since the matrix of $A - A_N$ is lower-triangular, the inequality \eqref{ineq Kacnelson}, applied to $A - A_N$, gives 
$\| D^{- 1} (A - A_N) D\| \leq \| A - A_N \| \converge_{N \to \infty} 0$. Moreover, for each $j \geq 1$, the map $T \in {\mathcal L} (H) \mapsto s_j (T)$ 
is continuous, since $|s_j (T_1) - s_j (T_2) | \leq \| T_1 - T_2 \|$. Then, letting $N$ tend to infinity, we obtain that 
\begin{displaymath} 
s_j (D^{- 1} A_N D) \converge_{N \to \infty} s_j (D^{- 1} A D) \, , 
\end{displaymath} 
and the result follows. 
\end{proof}

An alternative proof of Theorem~\ref{ext theo Kacnelson} can be given using antisymmetric tensor products.
\begin{proof}[Alternative proof of Theorem~\ref{ext theo Kacnelson}] 
Let $I$ denote the set of all increasing $n$-tuples $\alpha = (i_1 < i_2 < \cdots < i_n)$ of non-negative integers. Let  $(u_\alpha)_{\alpha \in I}$ be the 
orthonormal basis of $\Lambda^{n}(H)$, the $n$-th exterior power of $H$, defined by:
\begin{displaymath} 
\qquad\qquad u_\alpha = e_{i_1} \wedge e_{i_2} \wedge \cdots \wedge e_{i_n} \, , \quad \alpha \in I \, .
\end{displaymath} 
We use the general fact that:
\begin{displaymath} 
\prod_{j = 1}^n s_j (D^{- 1} A D) = \| \Lambda^{n} (D^{- 1} A D) \| \, ,
\end{displaymath} 
where $\Lambda^n$ denotes the $n$-th skew product. 

Since $\Lambda^n (U V) = \Lambda^n (U)  \Lambda^n(V)$ (\cite[page~10]{Simon}), we get:
\begin{align*} 
\Lambda^n (D^{- 1} A D) 
& = \Lambda^n (D^{- 1})  \Lambda^n (A)  \Lambda^n (D) = \big[ \Lambda^n (D)\big]^{- 1} \Lambda^n (A) \Lambda^n (D) \\ 
& =: \Delta^{- 1} \Lambda^n (A) \Delta \, , 
\end{align*} 
where $\Delta$ is the diagonal operator on the basis $(u_\alpha)$ with diagonal elements $\delta_\alpha = d_{i_1}\cdots d_{i_n}$ if 
$\alpha = (i_1 < i_2 < \cdots < i_n)$.

Now, we claim that  $\Lambda^n (A)$ is lower triangular in the following sense. If $\alpha = (i_1 < i_2 < \cdots < i_n)$ and 
$\beta = (j_1 < j_2 < \cdots < j_n)$  are two elements of $I$, then:
 \begin{equation}\label{elof} 
\delta_\alpha < \delta_\beta \Longrightarrow \big\langle \Lambda^n (A) \, u_\beta , u_\alpha \big\rangle = 0 \, .
\end{equation} 
Indeed, assume that $\big\langle \Lambda^n (A) \, u_\beta , u_\alpha \big\rangle \neq 0$. Since:
\begin{align*}
\big\langle \Lambda^n (A) \, u_\beta , u_\alpha \big\rangle 
& = \big\langle A e_{j_1}\wedge A e_{j_1} \cdots \wedge A e_{j_n} ,  e_{i_1} \wedge e_{i_1} \cdots \wedge e_{i_n} \big\rangle \\
& = \det \big( \langle A e_{j_p}, e_{i_q} \rangle \big)_{1 \leq p, q \leq n} \, ,
\end{align*}
it follows, by definition of determinants, that there exists a permutation $\sigma$ of $\{1, 2, \ldots, n\}$ such that:
\begin{displaymath} 
\prod_{1 \leq k \leq n} \langle A e_{j_k}, e_{i_{\sigma(k)}} \rangle \neq 0 \, , 
\end{displaymath} 
implying that $\ i_{\sigma(k)}\geq j_k$ for each $k$. But then, since $l \mapsto d_l$ is nondecreasing:
\begin{displaymath} 
\delta_\alpha = \prod_{1 \leq k \leq n} d_{i_{\sigma(k)}} \geq \prod_{1 \leq k \leq n} d_{j_k} = \delta_\beta \, .
\end{displaymath} 

Now, \eqref{elof} allows to apply Theorem~\ref{theo Kacnelson} to get the result.
\end{proof}

\noindent {\bf Remark.} We could also remark that the function:
\begin{displaymath} 
u (z) = \prod_{1 \leq j \leq n} s_j (D^{- z} A_N D^z) = \| \Lambda^n (D^{- z} A_N D^z) \|
\end{displaymath} 
 is subharmonic since it is a norm, on $\Lambda^n (H)$, and hence a supremum of moduli of  the holomorphic functions 
$z \mapsto l (D^{- z} A_N D^z)$, for $l$ a linear functional on $\Lambda^n (H)$.
\smallskip

\begin{corollary}
With the notation of Theorem~\ref{theo Kacnelson}, $D^{- 1} A D$ is compact if $A$ is. Moreover, for any $p > 0$, if $A \in S_p (H)$, so does $D^{- 1} A D$, 
and:
\begin{displaymath}
\| D^{- 1} A D \|_p \leq \| A \|_p \, .
\end{displaymath}
\end{corollary}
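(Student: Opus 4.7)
The plan is to combine Theorem~\ref{ext theo Kacnelson} with the tools developed in the subsection on subordination of sequences, so the corollary becomes essentially a bookkeeping exercise.

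First I would establish compactness. By Theorem~\ref{theo Kacnelson}, $D^{-1}AD$ is already a bounded operator on $H$, so its approximation numbers $a_j(D^{-1}AD)$ are well defined. Since $A$ is compact by hypothesis, Theorem~\ref{ext theo Kacnelson} applies and gives
\begin{displaymath}
\prod_{j = 1}^n a_j (D^{- 1} A D) \leq \prod_{j = 1}^n a_j (A)
\end{displaymath}
for every $n \geq 1$, i.e.\ the sequence $\bigl(a_j(D^{-1}AD)\bigr)_{j\geq 1}$ is log-subordinate to $\bigl(a_j(A)\bigr)_{j\geq 1}$. Applying the second inequality of Corollary~\ref{coro log-subordin} yields
\begin{displaymath}
a_{2n}(D^{-1} A D) \leq \sqrt{a_1(A)\, a_n(A)} \, .
\end{displaymath}
As $a_n(A) \to 0$ because $A$ is compact, this forces $a_n(D^{-1}AD) \to 0$, and hence $D^{-1}AD$ is compact.

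For the Schatten-class assertion, I would feed the log-subordination into Proposition~\ref{log-subordin}. That proposition tells us that $\bigl(a_j(D^{-1}AD)^p\bigr) \prec \bigl(a_j(A)^p\bigr)$ for every $p>0$, so
\begin{displaymath}
\sum_{j=1}^n \bigl[ a_j(D^{-1} A D) \bigr]^p \;\leq\; \sum_{j=1}^n \bigl[ a_j(A) \bigr]^p \qquad \text{for all } n \geq 1 \, .
\end{displaymath}
Letting $n \to \infty$ gives $\|D^{-1}AD\|_p^p \leq \|A\|_p^p$; in particular, $A \in S_p(H)$ implies $D^{-1}AD \in S_p(H)$ with $\|D^{-1}AD\|_p \leq \|A\|_p$.

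There is no real obstacle here: both conclusions are formal consequences of the log-subordination, once one has Theorem~\ref{ext theo Kacnelson} in hand. The only point requiring a little care is justifying the passage from "singular numbers tend to zero" to compactness — which is why I prefer to phrase everything in terms of approximation numbers $a_j$, since the identity $s_j = a_j$ is available for compact operators and $a_j$ itself is defined for arbitrary bounded operators; this keeps the argument clean even before compactness of $D^{-1}AD$ is known.
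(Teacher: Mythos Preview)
Your proof is correct and follows exactly the same route as the paper's: the log-subordination from Theorem~\ref{ext theo Kacnelson}, then Corollary~\ref{coro log-subordin} for compactness and Proposition~\ref{log-subordin} for the Schatten-class inequality. Your additional remark about phrasing things via approximation numbers (defined for all bounded operators) rather than singular numbers is a nice bit of hygiene that the paper leaves implicit.
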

\begin{proof}
Since $\big( s_n (D^{- 1} A D) \big)_n$ is log-subordinate to $\big( s_n (A) \big)_n\,$, Corollary~\ref{coro log-subordin} gives the first assertion, and 
Proposition~\ref{log-subordin} gives the second one.
\end{proof}
%
%%%%%%%%%%%%%%%%%%%%%%%%%%%%%%%%%%%%%%%%%%%%%
\subsection {Application to composition operators} 
\smallskip

We consider here general weighted Hilbert spaces of analytic functions on $\D$. 
\smallskip

Let $\beta = (\beta_k)_{k \geq 0}$ be a sequence of positive numbers such that:
\begin{equation} \label{cond sur beta}
\liminf_{k \to \infty} \beta_k^{\ 1/k} \geq 1
\end{equation}
(as we will see right after, this condition ensure that the evaluation maps are bounded) and let $H^{2}(\beta)$ be the Hilbert space of functions 
$f (z) = \sum_{k = 0}^\infty c_k z^k$ such that:
\begin{equation} \label{H2 (beta)}
\Vert f \Vert_{H^2 (\beta)}^2 := \sum_{k = 0}^\infty \beta_k \, |c_k|^2 <\infty \, .
\end{equation}
This is a Hilbert space of analytic functions on $\D$ with a reproducing kernel $K_a$, namely:
\begin{equation}
\qquad \qquad f (a) = \langle f, K_a \rangle \quad \text{for all } f \in H^{2}(\beta) \, , 
\end{equation}
because the evaluations $f \in H^{2}(\beta) \mapsto f (a)$ are continuous:
\begin{displaymath}
\bigg| \sum_{k = 0}^\infty c_k a^k \bigg| 
\leq \bigg( \sum_{k = 0}^\infty \beta_k \, |c_k|^2\bigg)^{1/2} \bigg( \sum_{k = 0}^\infty \beta_k^{ - 1} |a|^{2 k} \bigg)^{1/2} 
< \infty \, ,
\end{displaymath}
thanks to condition \eqref{cond sur beta}. 
\smallskip

The canonical orthonormal basis of $H^2 (\beta)$ is formed by the normalized monomials 
\begin{equation} \label{mono} 
\qquad e_k^{\beta} (z) = \frac{z^k}{\sqrt{\beta_k}} \, \raise 1 pt \hbox{,} \quad k = 0, 1, \ldots \, ; 
\end{equation}
so we have, for all $a \in \D$:
\begin{equation} 
\| K_a \|_{H_\omega^2}^2 = \sum_{n = 0}^\infty |e_n^\beta (a)|^2 = \sum_{n = 0}^\infty \frac{1}{\beta_n} \, |a|^{2 n} \, .
\end{equation} 

We refer to \cite{Co-MacC} or \cite{Zhu} for more on those spaces. See also \cite{Karim-Pascal} for an alternative definition. 
\smallskip

For example, the weighted Dirichlet space ${\mathcal D}^{\, 2}_\alpha$ corresponds to $\beta_k \approx (k + 1)^{1 - \alpha}$. In particular, 
the Hardy space $H^2$ corresponds to $\beta_k = 1$, the Bergman space ${\mathfrak B}^2$ to $\beta_k = 1 / (k + 1)$, and the Dirichlet space 
${\mathcal D}^{\, 2}$ to $\beta_k = (k + 1)$. 
\smallskip

For the weights 
\begin{displaymath} 
\beta_k = \frac{(k + 1)! \, \Gamma (\alpha + 2)}{\Gamma (k + \alpha + 1)} \, \raise 1 pt \hbox{,} 
\end{displaymath} 
we get, using the binomial formula $\sum_{k = 0}^\infty \frac{\Gamma (k + \alpha)}{k! \, \Gamma (\alpha)} \, x^k = (1 - x)^{- \alpha}$ for $|x| < 1$, 
that the reproducing kernels are, for $a \neq 0$:
\begin{align}
\qquad \qquad K_a^\alpha (z) & = \frac{1}{\alpha (\alpha + 1)} \, \frac{(1 - \bar{a} z)^{- \alpha} - 1}{\bar{a} z} 
\, \raise 1 pt \hbox{,} \qquad \text{for } \alpha > 0 \, ; \\
\qquad \qquad K_a^0 (z) & = \frac{1}{\bar{a} z} \, \log \frac{1}{(1 - \bar{a}z)} \, \raise 1 pt \hbox{,}
\end{align}
(with $K_0^\alpha (z) = 1 / ( \alpha + 1)$ and $K_0^0 (z) = 1$). 
\medskip

Let us point out that $\lim_{\alpha \to 0^+} K_a^\alpha (z) = K_a^0 (z)$. 
\smallskip\goodbreak

Let now $\varphi \colon \D \to \D$ be an analytic map. We assume that:
\begin{equation} \label{phi(0)=0} 
\varphi (0) = 0 \, .
\end{equation}
This map $\varphi$ induces \emph{formally} a lower-triangular composition operator $C_\varphi$ on $H^2 (\beta)$ since: 
\begin{displaymath}
\langle C_{\varphi} ( e_j^\beta),  e_i^\beta \rangle = \frac{1}{\sqrt{\beta_i \beta_j}} \, \langle \varphi^j, z^i \rangle = 0 \quad \text{for } i < j \, .
\end{displaymath}

\noindent {\bf Remark.} We can often omit condition \eqref{phi(0)=0}. In fact, let us consider the  automorphisms $\varphi_a \colon \D \to \D$, $a \in \D$, 
given by  $\varphi_{a} (z) = \frac{a - z}{1 - \bar{a} z}$. When $C_{\phi_a}$ is \emph{bounded on $H^2 (\beta)$}, then, with $a = \varphi (0)$, 
the function $\psi = \varphi_a \circ \varphi$ satisfies $\psi (0) = 0$ and $\phi = \phi_a \circ \psi$; hence $C_\phi = C_\psi \circ C_{\phi_a}$ and 
$C_\psi = C_\phi \circ C_{\phi_a}$, so:
\begin{displaymath} 
 \| C_{\phi_a}\|^{- 1} a_n (C_\psi) \leq a_n (C_\varphi) \leq  \| C_{\phi_a}\|\,  a_{n} (C_\psi) \, .
\end{displaymath} 

A necessary condition for having $C_{\phi_a}$ bounded on $H^2 (\beta)$ is that $\phi_a \in H^2 (\beta)$. Since 
$\phi_a (z) = a + \sum_{k = 1}^\infty \bar{a}^{k - 1} (|a|^2 - 1) z^k$, we have $\phi_a \in H^2 (\beta)$ for all $a \in \D$ if 
$\lim_{k \to \infty} \beta_k^{\ 1 /k} = 1$. 
\smallskip

For weighted Dirichlet spaces ${\mathcal D}^{\, 2}_\alpha$, with any $\alpha > - 1$, the automorphisms $\varphi_a$ define bounded composition 
operators on ${\mathcal D}^{\, 2}_\alpha$. In fact, we have, for $f \in {\mathcal D}^{\, 2}_\alpha$:
\begin{align*}
\| f \circ \phi_a \|_{{\mathcal D}^{\, 2}_\alpha}^2 
& = |f (a)|^2 + (\alpha + 1) \int_\D |f ' [\phi_a (z)] |^2 |\phi_a ' (z)|^2 (1 - |z|^2)^\alpha \, dA (z) \\
& = |f (a)|^2 + (\alpha + 1) \int_\D | f ' (w)|^2 (1 - |\phi_a (w)|^2)^\alpha \, dA (w) \, .
\end{align*}
Since:
\begin{displaymath}
\frac{1 - |\phi_a (w)|^2}{1 - |w|^2} = \frac{1 - |a|^2}{|1 - \bar{a} w|^2} \, \raise 1 pt \hbox{,}
\end{displaymath}
we have $1 - |\phi_a (w)|^2 \approx 1 - |w|^2$ and we get:
\begin{displaymath}
\| f \circ \phi_a \|_{{\mathcal D}^{\, 2}_\alpha}^2 \lesssim |f (a)|^2 + (\alpha + 1) \int_\D | f ' (w)|^2 (1 - |w|^2)^\alpha \, dA (w) 
\approx \| f \|_{{\mathcal D}^{\, 2}_\alpha}^2 \, .
\end{displaymath}

For $\alpha \geq 0$, that follows directly from \cite[Theorem~1]{Zorb} (see also \cite[Section~6.12]{Shapiro}, 
\cite[Theorem~1.3 and Proposition~3.1]{Karim-Pascal} or \cite[Theorem~3.1]{Pau-Perez}), since $\phi_a$ is univalent. 
\smallskip

We will write for short $C_\phi^\beta$ to designate the operator $C_\phi$ acting on $H^2 (\beta)$.
\medskip

As an application of the general principles of Section~\ref{subsection operators} we have the following result, whose first items were previously obtained 
by I.~Chalendar and J.~Partington in \cite{Isabelle} and \cite{IsabelleII} (actually (3.b) is also proved in \cite{IsabelleII}, but for values $p\ge1$).
\goodbreak

\begin{theorem} \label{theo compos op} 
Let $H^2 (\beta)$ and $H^2 (\gamma)$ be two weighted Hilbert spaces. Assume that $\gamma$ is dominated by $\beta$ in the sense that the sequence 
$(\beta_k / \gamma_k)$ is increasing, so that the continuous inclusion $H^2 (\beta) \subseteq H^2 (\gamma)$ holds. Then, for $\phi \colon \D \to \D$ with 
$\phi (0) = 0$:
\begin{itemize}
\setlength\itemsep {-0.1 em}
\item [$1)$] if $C_\phi^\beta$ is bounded, $C_\phi^\gamma$ is bounded as well, and $\| C_\phi^\gamma \| \leq \| C_\phi^\beta \|$; 

\item [$2)$] if $C_\phi^\beta$ is compact, so is $C_\phi^\gamma$;

\item [$3)$] the sequence $s^\gamma = \big( s_n (C_\phi^\gamma) \big)_{n \geq 1}$ is log-subordinate to the sequence 
$s^\beta = \big( s_n (C_\phi^\beta) \big)_{n \geq 1}$,  so that:
\vskip -1.5 em \null
\begin{itemize}
\item [\phantom{$3)$} $a) $] $s_{2n} (C_\phi^\gamma) \leq \sqrt{s_{1} (C_\phi^\beta)} \,  \sqrt{s_{n} (C_\phi^\beta) }$, for all $n \geq 1$;

\item [\phantom{$3)$} $b)$] $C_\phi^\beta \in S_p \big(H^2 (\beta) \big) \Longrightarrow C_\phi^\gamma \in S_p \big( H^2 (\gamma) \big)$, 
for any $p > 0$.
\end{itemize}
\end{itemize}
\end{theorem}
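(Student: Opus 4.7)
The plan is to realize both $C_\phi^\beta$ and $C_\phi^\gamma$ as lower-triangular matrices on $\ell_2$ that differ only by conjugation by a diagonal operator with an increasing diagonal, and then invoke the results of Subsection~\ref{subsection operators}, in particular Theorems~\ref{theo Kacnelson} and \ref{ext theo Kacnelson}.

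First I pass to coordinates using the canonical orthonormal bases $(e_k^\beta)_{k \geq 0}$ and $(e_k^\gamma)_{k \geq 0}$ from \eqref{mono}, identifying both $H^2(\beta)$ and $H^2(\gamma)$ with $\ell_2$. Writing $\phi^j(z) = \sum_{i \geq j} b_{i,j}\, z^i$, where the sum starts at $i = j$ precisely because $\phi(0) = 0$, a short computation using $\langle z^i, z^i \rangle_\beta = \beta_i$ yields
\[
\langle C_\phi\, e_j^\beta,\, e_i^\beta \rangle_\beta \;=\; \sqrt{\beta_i / \beta_j}\; b_{i,j}
\]
and the analogous identity with $\gamma$ in place of $\beta$. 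Setting $d_k = \sqrt{\beta_k / \gamma_k}$, which is increasing by the hypothesis on $(\beta_k/\gamma_k)$, a direct comparison of the two matrices then shows that, in these coordinates, $C_\phi^\gamma$ equals $D^{-1} A D$, where $A$ is the matrix of $C_\phi^\beta$ and $D$ is the (possibly unbounded) diagonal operator $D e_k = d_k e_k$ on $\ell_2$. The condition $\phi(0) = 0$ makes $A$ lower-triangular, so that we are exactly in the Kacnel'son setup.

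With this identification in hand, the three items of the theorem become direct applications. Item~(1) is immediate from Theorem~\ref{theo Kacnelson}: since $A$ is lower-triangular and bounded, we get $\|C_\phi^\gamma\| = \|D^{-1} A D\| \leq \|A\| = \|C_\phi^\beta\|$, which in particular gives the boundedness of $C_\phi^\gamma$. Item~(3)---the log-subordination $\prod_{j=1}^n s_j(C_\phi^\gamma) \leq \prod_{j=1}^n s_j(C_\phi^\beta)$ for all $n \geq 1$---is exactly Theorem~\ref{ext theo Kacnelson} applied to the compact operator $A$. Item~(3.a) is the content of \eqref{eq 2 log-sub} in Corollary~\ref{coro log-subordin}, and item~(3.b) follows from Proposition~\ref{log-subordin}: log-subordination implies $u^p \prec v^p$ for every $p > 0$, and summing over all $n$ then gives $\sum_j s_j(C_\phi^\gamma)^p \leq \sum_j s_j(C_\phi^\beta)^p$. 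Finally, item~(2), the compactness statement, follows from item~(3.a), since $s_n(C_\phi^\beta) \to 0$ forces $s_{2n}(C_\phi^\gamma) \to 0$.

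There is no substantive obstacle; the whole argument is essentially a translation of the problem into the Kacnel'son framework. The one point requiring care is the bookkeeping in the matrix identification, where the factors $\sqrt{\beta_i}$ and $\sqrt{\gamma_i}$ arising from the passage between the monomial basis $(z^i)$ and the orthonormal bases $(e_i^\beta)$, $(e_i^\gamma)$ must be tracked so that the ratio of matrix entries comes out as $d_j/d_i$ with $d_k = \sqrt{\beta_k/\gamma_k}$, and so that the hypothesis on $(\beta_k/\gamma_k)$ is exactly what makes $(d_k)$ increasing as required by Theorems~\ref{theo Kacnelson} and \ref{ext theo Kacnelson}.
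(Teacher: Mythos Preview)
Your proof is correct and takes essentially the same approach as the paper: both reduce to showing that $C_\phi^\gamma = D^{-1} A D$ with $A$ unitarily equivalent to $C_\phi^\beta$, $D$ diagonal with increasing entries $d_k = \sqrt{\beta_k/\gamma_k}$, and $A$ lower-triangular, after which Theorems~\ref{theo Kacnelson} and~\ref{ext theo Kacnelson}, Proposition~\ref{log-subordin}, and Corollary~\ref{coro log-subordin} do the work. The paper packages the identification via a unitary $J \colon H^2(\beta) \to H^2(\gamma)$ and a commutative diagram, whereas you compute matrix entries directly in the $\ell_2$ coordinates; these are only presentational differences.
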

\noindent {\bf Remark.} Let us mention that we can apply the previous theorem in the framework of weighted Dirichlet spaces. Indeed, let $0 < \beta < \gamma$ 
and consider the two weights: 
\begin{displaymath} 
\beta_k = \frac{k. k! \,\Gamma (\beta + 2)}{\Gamma (k +\beta + 1)} \qquad \text{and} \qquad 
\gamma_k = \frac{k. k!\,\Gamma (\gamma + 2)}{\Gamma (k + \gamma + 1)} 
\end{displaymath} 
associated with the weighted Dirichlet spaces $\mathcal{D}_{\beta}^{\, 2}$ and  $\mathcal{D}_{\gamma}^{\, 2}$ respectively, with $\gamma > \beta$, so 
that $\mathcal{D}_{\beta}^{\, 2}\subset \mathcal{D}_{\gamma}^{\, 2}$ . In order to apply our comparison Theorem~\ref{theo compos op}, we have to 
show that the sequence $(\beta_k/\gamma_k)$ increases. But
\begin{displaymath} 
\frac{\beta_k}{\gamma_k} = \frac{\Gamma (\beta + 2)}{\Gamma (\gamma + 2)}\, \frac{\Gamma (\gamma + k + 1)}{\Gamma (\beta + k + 1)} 
=: \frac{\Gamma (\beta + 2)}{\Gamma (\gamma + 2)}\, A_k \, ,
\end{displaymath} 
and, setting $h =\gamma - \beta > 0$ and $x_k = \beta + k + 1$, we see that:
\begin{displaymath} 
A_k = \frac{\Gamma (x_{k} + h)}{\Gamma (x_k)} \, \cdot
\end{displaymath} 
Since the function $\Gamma$ is log-convex, the map $x \mapsto \frac{\Gamma (x + h)}{\Gamma (x)}$ increases on $(0, \infty)$, and we get that 
the sequence $(\beta_k/\gamma_k)$ increases. 

\begin{proof} [Proof of Theorem~\ref{theo compos op}] 
We  set $d_k = \sqrt{\beta_k / \gamma_k}$ and $e_k (z) = z^k$. 

Let $J \colon H^2 (\beta) \to H^2 (\gamma)$ the unitary (onto isometry) and diagonal operator defined by $J (e_k) = d_k e_k$, for all $k \geq 0$. 

The operator $A = J C_\phi^\beta J^{- 1}$ maps $H^2 (\gamma)$ into itself and $s_n (A) = s_n (C_\phi^\beta)$ for all $n \geq 1$ (in particular 
$\| A \|_{{\cal L} (H^2 (\gamma) )} = \| C_\phi^\beta \|_{{\cal L} (H^2 (\beta) )}$). Moreover, $A$ has a lower-triangular matrix. 

Now we consider the diagonal operator $D \colon H^2 (\gamma) \to H^2 (\gamma)$ defined by $D (e_k) = d_k e_k$. In general, it is an unbounded operator. 
It is plain that $D^{- 1} J \colon H^2 (\beta) \to H^2 (\gamma)$ is the canonical inclusion, since $(D^{- 1} J) (e_k) = e_k$ for all $k \geq 0$. Hence 
$(D^{- 1} J) C_\phi^\beta = C_\phi^\gamma (D^{- 1} J)$, and since $A J = J C_\phi^\beta$, we have the following commutative diagram:
\begin{displaymath} 
\xymatrix{
& H^2 (\beta) \ar[r]^{C_\phi^\beta} \ar[d]_J & H^2 (\beta) \ar[d]^J & \\
H^2 (\gamma) \ar[r]_D & H^2 (\gamma) \ar[r]_A & H^2 (\gamma) \ar[r]_{D^{ - 1}} & H^2 (\gamma)
}
\end{displaymath} 

By Theorem~\ref{ext theo Kacnelson}, we get:
\begin{displaymath}
\log s (D^{- 1} A D) \prec \log s (A) 
\end{displaymath}
(so we have, in particular, 
$\| D^{- 1} A D \|_{{\cal L} ( H^2 (\gamma) )} \leq \| A \|_{{\cal L} ( H^2 (\gamma) )} = \| C_\phi^\beta \|_{{\cal L} ( H^2 (\beta) )}$). 
But $D^{- 1} A D = C_\phi^\gamma$, and this proves Theorem~\ref{theo compos op}, using Proposition~\ref{log-subordin} and 
Corollary~\ref{coro log-subordin}.
\end{proof}
\goodbreak

\noindent{\bf Remark.} Actually, the same proof gives the following generalization of Theorem~\ref{theo compos op}.
\goodbreak

\begin{theorem} \label{theo compos op-bis}
With the hypothesis of Theorem~\ref{theo compos op}, let $T \colon {\mathcal H}ol (\D) \to {\mathcal H}ol (\D)$ be a linear map such that its 
restriction $T_\beta$ to $H^2 (\beta)$ is bounded from  $H^2 (\beta)$ into $H^2 (\beta)$ and has a matrix in the canonical basis of $H^2 (\beta)$ which is 
lower-triangular. Then:
\begin{itemize}
\setlength\itemsep {-0.1 em}
\item [$1)$] $T_\gamma$ is bounded as well, and $\| T_\gamma \| \leq \| T_\beta \|$; 

\item [$2)$] if $T_\beta$ is compact, so is $T_\gamma$;

\item [$3)$] the sequence of singular numbers $s (T_\gamma) = \big( s_n (T_\gamma) \big)_{n \geq 1}$ is log-subordinate to the sequence 
$s (T_\beta) = \big( s_n (T_\beta) \big)_{n \geq 1}$,  so that:
\vskip -1.2 em \null
\begin{itemize}
\item [\phantom{$3)$} $a) $] $s_{2n} (T_\gamma) \leq \sqrt{s_{1} (T_\beta)} \, \sqrt{s_{n} (T_\beta)}$, for all $n \geq 1$;

\item [\phantom{$3)$} $b)$] $T_\beta \in S_p \big(H^2 (\beta) \big) \Longrightarrow T_\gamma \in S_p \big( H^2 (\gamma) \big)$, for any $p > 0$.
\end{itemize}
\end{itemize}
\end{theorem}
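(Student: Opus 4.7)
The plan is to recycle, essentially verbatim, the proof of Theorem~\ref{theo compos op}, after observing that the only features of $C_\phi$ actually invoked there are (i) that $C_\phi$ is a single linear map on $\mathcal{H}ol(\D)$, so that its restrictions to $H^2(\beta)$ and to $H^2(\gamma)$ are governed by the same formal matrix on the monomials, and (ii) that this matrix is lower-triangular. Both features are precisely what we assume about $T$ in the statement.

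Concretely, I would set $d_k = \sqrt{\beta_k/\gamma_k}$ and $e_k(z)=z^k$, introduce the unitary diagonal operator $J\colon H^2(\beta)\to H^2(\gamma)$ defined by $J(e_k)=d_k e_k$, and the (in general unbounded) diagonal operator $D\colon H^2(\gamma)\to H^2(\gamma)$ defined by $D(e_k)=d_k e_k$; by hypothesis on $(\beta_k/\gamma_k)$ the sequence $(d_k)$ is increasing. Setting $A=JT_\beta J^{-1}$ on $H^2(\gamma)$ we have $s_n(A)=s_n(T_\beta)$ and $\|A\|=\|T_\beta\|$, and $A$ still has a lower-triangular matrix in the canonical basis of $H^2(\gamma)$ because $J$ is diagonal.

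The key verification is the identity $D^{-1}AD=T_\gamma$, carried out at the level of monomials. Writing $Te_k=\sum_{i\geq k} t_{i,k}\,e_i$, the \emph{same} expansion is valid for $T_\beta$ and $T_\gamma$ because $T$ is a single map on $\mathcal{H}ol(\D)$; a direct computation then gives $Ae_k=\sum_{i\geq k} t_{i,k}(d_i/d_k)\,e_i$ and hence $D^{-1}ADe_k=\sum_{i\geq k} t_{i,k}\,e_i=T_\gamma e_k$. Consequently, Theorem~\ref{theo Kacnelson} applied to the lower-triangular $A$ yields boundedness and $\|T_\gamma\|\leq \|A\|=\|T_\beta\|$, which is item~(1); when $T_\beta$ (equivalently $A$) is compact, Theorem~\ref{ext theo Kacnelson} gives that $(s_n(T_\gamma))$ is log-subordinate to $(s_n(T_\beta))$, from which Corollary~\ref{coro log-subordin} furnishes $s_{2n}(T_\gamma)\leq \sqrt{s_1(T_\beta)\,s_n(T_\beta)}$, hence (3a), and also (2) because this quantity tends to $0$; Proposition~\ref{log-subordin} then converts log-subordination into $\ell_p$-domination, yielding (3b).

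The only subtlety I anticipate is that $D$ is in general unbounded, so the relation $D^{-1}AD=T_\gamma$ must first be read algebraically on the dense span of the monomials. This is harmless: once Theorem~\ref{theo Kacnelson} is applied, both sides extend to bounded operators on $H^2(\gamma)$ which agree on a total set, so the identity holds in $\mathcal{L}(H^2(\gamma))$. Since every ingredient of the original argument transfers unchanged, no new estimate is needed, and I do not expect any genuine technical obstacle beyond this bookkeeping.
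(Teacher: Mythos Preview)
Your proposal is correct and is exactly the approach the paper intends: the authors simply note that ``the same proof'' as for Theorem~\ref{theo compos op} goes through, and you have faithfully carried out that transfer, with the same diagonal operators $J$ and $D$, the identification $D^{-1}AD=T_\gamma$, and the appeals to Theorems~\ref{theo Kacnelson} and~\ref{ext theo Kacnelson}, Corollary~\ref{coro log-subordin}, and Proposition~\ref{log-subordin}. Your extra remark on the unboundedness of $D$ and the density argument is a welcome clarification but introduces no deviation from the paper's route.
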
 
%

%%%%%%%%%%%%%%%%%%%%%%%%%%%%%%%%%
\subsection{Application to conditional multipliers} \label{sec: conditional multipliers}

We first recall the following well-known proposition (and give a short proof, for sake of completeness). Note that this is not shared by the 
Dirichlet spaces $\mathcal{D}^{\, 2}_\alpha$ when $\alpha \leq 0$ (\cite[Theorem~10]{Taylor}; see also \cite[Theorem~2.7]{Stegenga}, 
\cite[Theorem~A]{Kerman-Sawyer}, and \cite[Theorem~4.2]{Wu}). Recall that it is well-known that the space ${\mathcal M} (H^2)$ of multipliers of 
$H^2$ is isometric to $H^\infty$.
\goodbreak

\begin{proposition} \label{multipliers}
For every $\gamma > - 1$, the space $\mathcal{M} (\mathfrak{B}^2_\gamma)$ of multipliers of $\mathfrak{B}^2_\gamma$ is isometric to the space 
$H^\infty$.\par

If $H$ is a Hilbert space of analytic functions on $\D$, containing the constants, and with reproducing kernels $K_a$, $a \in \D$, then the 
space $\mathcal{M} (H)$ of multipliers of $H$ is contained contractively into the space $H^\infty$.
\end{proposition}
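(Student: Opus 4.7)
The plan is to handle the second, more general statement first, then deduce the first by combining it with an easy direct estimate.

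\smallskip

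\emph{Step 1 (reproducing kernels as eigenvectors of the adjoint).} Assume $H$ is a Hilbert space of analytic functions on $\D$ containing the constants and with reproducing kernels $K_a$. For $\phi \in \mathcal{M}(H)$, write $M_\phi$ for multiplication by $\phi$, so that $\|\phi\|_{\mathcal{M}(H)} = \|M_\phi\|$. For every $a \in \D$ and every $f \in H$, the reproducing property yields
\begin{displaymath}
\langle M_\phi f , K_a \rangle = (\phi f)(a) = \phi(a)\, f(a) = \langle f , \overline{\phi(a)}\, K_a \rangle \, ,
\end{displaymath}
so $M_\phi^\ast K_a = \overline{\phi(a)}\, K_a$. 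Because $1 \in H$, we have $\langle 1, K_a \rangle = 1$, hence $K_a \neq 0$. Consequently
\begin{displaymath}
|\phi(a)| = \frac{\| M_\phi^\ast K_a \|}{\|K_a\|} \leq \| M_\phi^\ast\| = \| M_\phi\| = \|\phi\|_{\mathcal{M}(H)} \, ,
\end{displaymath}
giving $\phi \in H^\infty$ with $\|\phi\|_\infty \leq \|\phi\|_{\mathcal{M}(H)}$. This proves the second part.

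\smallskip

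\emph{Step 2 (reverse inequality for weighted Bergman spaces).} Now let $\gamma > -1$ and $\phi \in H^\infty$. The norm of $\mathfrak{B}^2_\gamma$ is an $L^2$-norm against the finite positive measure $dA_\gamma = (\gamma + 1)(1-|z|^2)^\gamma \, dA(z)$, so for every $f \in \mathfrak{B}^2_\gamma$,
\begin{displaymath}
\|\phi f\|_{\mathfrak{B}^2_\gamma}^2 = \int_\D |\phi(z)|^2 |f(z)|^2 \, dA_\gamma(z) \leq \|\phi\|_\infty^2 \, \|f\|_{\mathfrak{B}^2_\gamma}^2 \, .
\end{displaymath}
Since $\phi f$ is analytic, this shows $\phi \in \mathcal{M}(\mathfrak{B}^2_\gamma)$ with $\|\phi\|_{\mathcal{M}(\mathfrak{B}^2_\gamma)} \leq \|\phi\|_\infty$. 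The space $\mathfrak{B}^2_\gamma$ contains the constants and has reproducing kernels, so Step~1 applies and gives the reverse inequality $\|\phi\|_\infty \leq \|\phi\|_{\mathcal{M}(\mathfrak{B}^2_\gamma)}$. Thus $\mathcal{M}(\mathfrak{B}^2_\gamma) = H^\infty$ isometrically.

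\smallskip

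There is essentially no obstacle here; the only point requiring a moment of care is the nonvanishing of $K_a$, which is why the hypothesis ``$H$ contains the constants'' is invoked. Note that the crucial use of $\gamma > -1$ is that $dA_\gamma$ is a positive measure on $\D$ for which the norm is a genuine $L^2$-integral, so that pointwise multiplication is obviously bounded by $\|\phi\|_\infty$; this is exactly the feature that fails for the Dirichlet-type weights $\alpha \leq 0$.
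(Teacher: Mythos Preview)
Your proof is correct and follows essentially the same approach as the paper: the reproducing-kernel eigenvector identity $M_\phi^\ast K_a = \overline{\phi(a)}\,K_a$ (with $K_a \neq 0$ because $\langle 1, K_a\rangle = 1$) gives the contractive inclusion $\mathcal{M}(H) \hookrightarrow H^\infty$, and for $\mathfrak{B}^2_\gamma$ the reverse inequality is the trivial $L^2(dA_\gamma)$ estimate. The paper's argument is the same, only presented in a slightly different order.
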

\begin{proof} 
If $h f \in H$ for all $f \in H$, then, taking $f = \ind$, we have $h \in H$, so $h$ is analytic. The same proof as in \cite[Proposition~3.1]{Attele} shows that 
$h \in H^\infty$. For sake of completeness we give a short different proof. \par

In fact, we have, for all $a \in \D$:
\begin{equation} \label{adve} 
\qquad M_h^\ast (K_a) = \overline{h (a)} \, K_a \quad \text{for all } a \in \D \, ; 
\end{equation} 
hence $|h (a)| \, \| K_a\| \leq \| M_h^\ast\| \, \| K_a\|$, and, since $\| K_a\|$ is not null, 
that proves that $h \in H^\infty$ and $\| h \|_\infty \leq \| M_h \|$. 

Hence $\mathcal{M} (H) \subseteqq H^\infty$, contractively. 
\smallskip

When $H = \mathfrak{B}^2_\gamma$, we have the reverse inclusion. Indeed, for every $h \in H^\infty$, one clearly has $h f \in \mathfrak{B}^2_\gamma$ and 
$\| h f \|_{\mathfrak{B}^2_\gamma} \leq \| h \|_\infty \| f \|_{\mathfrak{B}^2_\gamma}$ for all $f \in \mathfrak{B}^2_\gamma$, so the multiplication 
operator $M_h \colon \mathfrak{B}^2_\gamma \to \mathfrak{B}^2_\gamma$ is bounded with norm $\leq \| h \|_\infty $. 
\end{proof}

Let now $\varphi$ be an analytic self-map of $\D$ and $H = H^{2}(\beta)$ be a weighted Hilbert space of analytic functions on $\D$, with reproducing 
kernel $K_a$, $a\in \D$, on which $C_\phi$ acts boundedly . We denote its multiplier set, respectively multiplier set  conditionally to $\varphi$, by:  
\begin{equation} 
 \mathcal{M} (H)  = \{w \in H \tq  w f \in H \text{ for each } f \in H \} 
\end{equation} 
and
\begin{equation} 
\mathcal{M} (H, \varphi) = \{w \in H \tq w \, (f \circ \varphi) \in H \text{ for all } f \in H \} \, .
\end{equation} 

We have $\mathcal{M} (H) \subseteq {\mathcal M} (H, \phi)$.
\smallskip

The set $\mathcal{M} (H, \varphi)$ plays an important role in the study of weighted composition operators.
\medskip\goodbreak

\begin{definition}
A Hilbert space $H$ of analytic functions on $\D$, containing the constants, and with reproducing kernels $K_a$, $a \in \D$, is said 
\emph{admissible} if: 
\begin{enumerate}
\setlength\itemsep {-0.05 em}

\item [$(i)$]  $H^2$ is continuously embedded in $H$;  

\item [$(ii)$] $\mathcal{M} (H) = H^\infty$; 

\item [$(iii)$] the automorphisms of $\D$ induce bounded composition operators on $H$;

\item [$(iv)$] $\displaystyle \frac{\Vert K_a\Vert_H}{\Vert K_b\Vert_H} \leq h \bigg(\frac{1 - |b|}{1 - |a|} \bigg)$ for $a, b \in \D$ close to 
$\partial \D$, where $h \colon \R^+\to \R^+$ is an non-decreasing function.

\end{enumerate}
\end{definition}

Note that $(i)$ implies that $\Vert f \Vert_H \leq C \, \Vert f \Vert_{H^2}$ for all $f \in H^2$, for some positive constant $C$, and so ($B_H$ and $B_{H^2}$ 
being the unit ball of $H$ and $H^2$ respectively):
\begin{displaymath} 
\Vert K_{a}\Vert_{H} = \sup_{f \in B_H} |f (a)| \geq C^{- 1} \sup_{f \in B_{H^2}} |f (a) | = C^{- 1} (1 - |a|^2)^{- 1/2} \, ,
\end{displaymath} 
implying that:
\begin{displaymath} 
\lim_{|a|\to 1^-}\Vert K_a\Vert_H = \infty \, .
\end{displaymath} 

\noindent \emph{Examples.} 
\smallskip

1) The weighted Bergman space ${\mathfrak B}_\gamma^2$, with $\gamma > - 1$ is admissible. 

Indeed,  we know that it is continuously embedded in $H^2 = {\mathfrak B}_{- 1}^2$; condition $(ii)$ is Proposition~\ref{multipliers}; 
condition $(iii)$ is  satisfied according to the Remark before Theorem~\ref{theo compos op}, and  
$\Vert K_a \Vert^2 = \frac{1}{(1 - |a|^2)^{\gamma + 2}}\,$, giving $(iv)$. 
\smallskip

2) More generally, we have the following result.
\begin{proposition} \label{prop admissible}
For any decreasing sequence $\beta$ such that the automorphisms of $\D$ induce bounded composition operators on $H^2 (\beta)$, the space $H^2 (\beta)$ is 
admissible.
\end{proposition}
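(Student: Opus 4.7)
The plan is to verify the four admissibility axioms. Axiom (i) follows directly from $\beta$ being non-increasing, since $\beta_k \le \beta_0$ gives $\|f\|_{H^2(\beta)}^2 \le \beta_0 \|f\|_{H^2}^2$, and axiom (iii) is just the standing hypothesis of the proposition.

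For axiom (ii), Proposition~\ref{multipliers} already provides $\mathcal{M}(H^2(\beta)) \subseteq H^\infty$ contractively. For the reverse inclusion I would apply Theorem~\ref{theo compos op-bis} to the multiplication operator $M_h$ for $h \in H^\infty$: it is bounded on $H^2$ with norm $\|h\|_\infty$, and its matrix in the monomial basis is lower-triangular because $h(z)z^j$ only involves powers $z^k$ with $k \ge j$. Taking the Hardy weight $\beta^{(1)}_k \equiv 1$ to dominate $\beta^{(2)}_k = \beta_k$---the required monotonicity of $(\beta^{(1)}_k/\beta^{(2)}_k) = (1/\beta_k)$ being exactly the hypothesis that $\beta$ decreases---the theorem transfers the boundedness to $H^2(\beta)$ with the same norm bound, yielding the isometric equality $\mathcal{M}(H^2(\beta)) = H^\infty$.

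Axiom (iv) is the main obstacle. Since $\|K_a\|^2 = \sum_{k \ge 0} |a|^{2k}/\beta_k$ depends only on $|a|$, setting $N(r) = \sum_{k \ge 0} r^{2k}/\beta_k$ reduces the problem to bounding the ratio $N(r)/N(s)$ by a non-decreasing function of $(1-s)/(1-r)$ for $s \le r$ close to $1$. I would combine the reproducing-kernel identities $K_a = (C_{\phi_a})^{\ast} K_0$ (using $\phi_a(0)=a$) and $K_0 = (C_{\phi_b})^{\ast} K_b$ (using $\phi_b(b)=0$) to obtain
\begin{displaymath}
\frac{\|K_a\|}{\|K_b\|} \le \|C_{\phi_a}\| \cdot \|C_{\phi_b}\|,
\end{displaymath}
thereby reducing (iv) to controlling $\|C_{\phi_c}\|$ by a non-decreasing function of $1/(1-|c|)$. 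The hard step is promoting the merely qualitative boundedness of $C_{\phi_c}$ given by hypothesis (iii) into the needed quantitative estimate; my strategy is to use the decreasing character of $\beta$ together with the explicit action of $\phi_c$ on the monomial basis (via the Taylor coefficients of $\phi_c^n$) to read off a growth rate as $|c| \to 1$, with a closed-graph/continuity argument on the map $c \mapsto C_{\phi_c}$ as a fallback.
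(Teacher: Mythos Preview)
Your treatment of (i), (ii) and (iii) is fine and matches the paper. The gap is in (iv).

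Your factorization $\|K_a\|/\|K_b\| \le \|C_{\phi_a}\|\,\|C_{\phi_b}\|$ is correct, but it cannot yield a bound of the form $h\big((1-|b|)/(1-|a|)\big)$. Even if you succeed in proving $\|C_{\phi_c}\| \le g\big(1/(1-|c|)\big)$ for some increasing $g$, the resulting estimate $g\big(1/(1-|a|)\big)\,g\big(1/(1-|b|)\big)$ depends on $|a|$ and $|b|$ separately: take $a=b$ with $|a|\to 1$; the ratio $(1-|b|)/(1-|a|)$ stays equal to $1$, yet your bound blows up. Passing through $K_0$ destroys the scale-invariance that condition (iv) demands. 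Neither the Taylor-coefficient computation nor a closed-graph argument on $c\mapsto C_{\phi_c}$ can repair this, because the obstruction is in the reduction, not in the quantitative estimate.

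The paper's device is to fix a \emph{single} automorphism $T(z)=(2z+1)/(z+2)$, whose iterates $a_n=T^n(0)$ satisfy $(1-a_{n+1})\asymp (1-a_n)$ with fixed ratios, so they form a geometric grid in $[0,1)$. Given $0<y<x<1$, one locates $y$ and $x$ in this grid at positions $m$ and $n$; then $C_T^\ast K_z = K_{T(z)}$ gives $\|K_x\|/\|K_y\|\le \|C_T\|^{\,n-m+1}$, while the geometric spacing forces $n-m$ to be controlled by $\log\big((1-y)/(1-x)\big)$. The point is that only the fixed finite number $\|C_T\|$ enters, and the exponent---not the base---carries the dependence on $a,b$; this is precisely what makes the bound a function of the ratio alone. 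If you want to salvage your approach, you should map $b$ to $a$ by a single automorphism $\psi$ and observe that its M\"obius parameter $(a-b)/(1-ab)$ depends only on $(1-|b|)/(1-|a|)$ as $|a|,|b|\to 1$; but then you still need local uniform boundedness of $c\mapsto\|C_{\phi_c}\|$ on compacta of $\D$, and the paper's iteration argument is both cleaner and uses only the hypothesis as stated.
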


Recall that $H^2 (\beta)$ is defined in \eqref{H2 (beta)}. A particular case is obtained as follows.
Let $\omega \colon (0, 1) \to \R_+$ be an integrable function such that, for some positive and locally bounded function $\rho\colon \R_+\to \R_+$, we have: 
\begin{equation} \label{andalso} 
\qquad \qquad \qquad \qquad \frac{\omega (y)}{\omega (x)} \leq \rho \bigg( \frac{y}{x} \bigg) \quad \text{ for all } x, y \in (0, 1) \, ,
\end{equation}
and let $H^2_\omega$ be the space of analytic functions $f \colon \D \to \C$ such that:
\begin{equation} 
\Vert f \Vert_{H^2_\omega}^{2} := \int_{\D} |f (z)|^2 \, \omega (1 - |z|^2) \, dA (z) < \infty \, .
\end{equation} 

Such spaces are used in \cite{Karim-Pascal} and in \cite{LQR-radius}. We have $H_\omega^2 = H^2 (\beta)$ with:
\begin{equation} \label{coto}
\beta_n = 2 \int_{0}^{1} r^{2 n + 1} \omega (1 - r^2) \, dr = \int_{0}^{1} t^{n} \omega (1 - t ) \, dt \, .
\end{equation} 
Indeed, since $\beta_n = \int_{0}^{1}(1 -  t)^{n} \omega ( t ) \, dt$, the sequence $\beta = (\beta_n)_n$ is decreasing. Moreover, the fact that the 
automorphisms of $\D$ induce bounded composition operators on $H_\omega^2$ is proved as in  the Remark before Theorem~\ref{theo compos op}, namely:
\begin{align*}
\Vert f \circ \varphi_a \Vert_{H_\omega^2}^2 
& = \int_{\D} |f (w)|^2 \, |\varphi_{a}' (w)|^2 \, \omega (1 - |\varphi_{a} (w)|^2)  \, dA (w) \\
& \leq \bigg( \frac{1 + |a|}{1 - |a|} \bigg)^2 
\int_{\D} |f (w)|^2 \, \rho \bigg(\frac{1 - |\varphi_{a} (w)|^2}{1 - |w|^2} \bigg) \, \omega ( 1 - |w|^2)  \, dA (w) \\
& \leq \bigg( \frac{1 + |a|}{1 - |a|} \bigg)^2 c_{\rho, a} \int_{\D} |f (w)|^2 \, \omega(1-|w|^2) \, dA (w) \\
& =: \kappa_a \, \Vert f \Vert_{H_\omega^2}^2 \, ,
\end{align*}
where we used that $| \phi_a ' (w)| \leq \frac{1 + |a|}{1 - |a|}$, that $\frac{1 - |\varphi_{a} (w)|^2}{1 - |w|^2} \leq \frac{1 + |a|}{1 - |a|}$, and that 
$\rho$ is locally bounded. 
\smallskip

Note that  ${\mathfrak B}_{\gamma}^{2}$, $\gamma > - 1$, corresponds to $\omega (t) = (\gamma  + 1) \, t^\gamma$.
%\smallskip

%
\begin{proof} [Proof of Proposition~\ref{prop admissible}] 
Condition $(i)$ is satisfied because $\beta$ is decreasing. Moreover, since $\beta$ is decreasing, Theorem~\ref{theo compos op-bis}, applied to 
$T = M_w$, with $w \in H^\infty$, ensures that 
$H^\infty = {\mathcal M} (H^2) \subseteq {\mathcal M} \big(H^2 (\beta) \big)$, and, for all $w \in H^\infty$:
\begin{displaymath} 
\| M_w \colon H^2 (\beta) \to H^2 (\beta) \| \leq \| M_w \colon H^2 \to H^2 \| = \| w \|_\infty \, .
\end{displaymath} 
Now, Proposition~\ref{multipliers} implies that $H^\infty = {\mathcal M} \big(H^2 (\beta) \big)$ and 
\begin{displaymath} 
\| M_w \colon H^2 (\beta) \to H^2 (\beta) \| = \| w \|_\infty 
\end{displaymath} 
for all  $w \in H^\infty$. 

It remains to show that, for $H = H^2 (\beta)$, the condition $(iii)$ implies the condition $(iv)$.

Since $H^2 (\beta)$ is isometrically rotation invariant, it is clear that $\|K_a\|=\|K_{|a|}\|$; hence, by the maximum principle, 
$\|K_x \| \le \|K_y \|$, for $0 \le x \le y < 1$. 

Assume now that $0 < y < x < 1$. Let $T$ be the disk automorphism:
\begin{equation} \label{automorphism}
T (z) = \frac{ 2 z + 1}{z + 2} \, , \qquad z \in \D \, .
\end{equation} 
The fixed points of $T$ are $1$ and $-1$, and $T (0) = 1/2$. We define the sequence $(a_n)_{n \ge 0}$ by induction, with:
\begin{displaymath} 
a_0 = 0 \, , \qquad a_{n + 1} = T (a_n) \, .
\end{displaymath} 
We see that:
\begin{equation} \label{primera}
1 - a_{n + 1} = \int_{a_n}^1 T ' (x) \, dx = \int_{a_n}^1 \frac{3}{(x + 2)^2} \, dx \le \frac{3}{4}\, (1 - a_n) \, ;
\end{equation}
so $(a_n)_n$ is increasing and converges to $1$. In the same way, we see that:
\begin{equation} \label{segunda}
1 - a_{n + 1} = \int_{a_n}^1 T ' (x) \, dx = \int_{a_n}^1 \frac{3}{(x + 2)^2} \, dx \ge \frac{1}{3} \,  (1 - a_n) \, .
\end{equation}

Since $0 < y < x < 1$, we can find $m \le n$ such that:
\begin{displaymath} 
a_{m - 1} < y < a_m \, , \qquad \text{and}\qquad a_{n - 1} < x < a_n \, .
\end{displaymath} 
We have $\| K_x \| \leq \| K_{a_n} \|$ and $\| K_y \| \geq \| K_{a_{m - 1}} \|$. 
Since $C_T^* K_z = K_{T (z)}$ for all $z \in \D$, we have:
\begin{displaymath} 
\frac{\|K_x \|}{\|K_y \|} \le \frac{\|K_{a_n}\|}{\|K_{a_{m - 1}}\|} \le \| C_T^*\|^{n - m + 1} = \alpha^{n - m + 1} \, ,
\end{displaymath} 
with $\alpha = \|C_T\| \ge 1$. 
Applying \eqref{primera} and \eqref{segunda}, we get:
\begin{displaymath} 
\frac{1 - y}{1 - x} \ge \frac{1 - a_m}{1 - a_{n - 1}} \ge \frac{1 - a_m}{3 (1 - a_{n})}
\ge \frac{1}{3} \Bigl( \frac{4}{3}\Bigr)^{n - m} \, .
\end{displaymath} 

It suffices now to take $s \ge 0$ such that $(4/3)^s = \alpha$, and $A > 0$ large enough in order that, with the increasing function 
$h (t) = \max\{A t^s, 1\}$, $t > 0$, we have:
\begin{displaymath} 
h \Bigl(\frac{1 - y}{1 - x}\Bigr) \ge \frac{A \,}{3^s} \, \alpha^{n - m} \ge \frac{A \,}{3^s \alpha} \frac{\|K_x\|}{\|K_y\|}
\ge \frac{\|K_x\|}{\|K_y \|}\, \cdot \qedhere
\end{displaymath} 
\end{proof}
\bigskip

Let us come back to the conditional multipliers. 
In general, we obviously have:
\begin{equation} \label{inclusion relative multipliers}
H^\infty \subseteq \mathcal{M} (H, \varphi) \subseteq H \, .
\end{equation} 

The extreme cases were characterized by Attele (\cite{Attele-2}) when $H = H^2 = {\mathfrak B}_{- 1}^{2}$ (and 
Contreras and Hern{\'a}ndez-D{\'\i}az in \cite{Contreras} for the spaces $H^p$)  as follows. 
\smallskip\goodbreak

\begin{theorem} [Attele] \label{th-attele}
We have: 
\begin{enumerate}
\setlength\itemsep {-0.05 em}

\item [$1)$] $\mathcal{M} (H^2, \varphi) = H^2$ if and only if $\Vert \varphi \Vert_\infty < 1$. 

\item [$2)$] $\mathcal{M} (H^2, \varphi) = H^\infty$ if and only if $\varphi$ is a finite Blaschke product.
\end{enumerate}
\end{theorem}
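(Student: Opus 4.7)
The plan is to apply the closed graph theorem to the bilinear map $(w,f)\mapsto w(f\circ\varphi)\in H^2$ and then test the resulting estimate on the reproducing kernels $K_a(z)=1/(1-\bar a z)$ of $H^2$, for which $K_a(a)=\|K_a\|_{H^2}^2=1/(1-|a|^2)$ and $|g(a)|\le \|g\|_{H^2}\|K_a\|_{H^2}$ for all $g\in H^2$.

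For 1), the implication $\|\varphi\|_\infty<1\Rightarrow\mathcal{M}(H^2,\varphi)=H^2$ is immediate: $\varphi(\D)$ being relatively compact in $\D$, every $f\in H^2$ is uniformly bounded on $\varphi(\D)$, so $f\circ\varphi\in H^\infty$ and $w(f\circ\varphi)\in H^2$ for all $w\in H^2$. For the converse, assuming $\mathcal{M}(H^2,\varphi)=H^2$, the map $f\mapsto w(f\circ\varphi)$ from $H^2$ to $H^2$ has closed graph for each fixed $w$ (convergence in $H^2$ forces pointwise convergence on $\D$) and is therefore bounded; symmetrically in $w$, so the uniform boundedness principle yields a bilinear estimate $\|w(f\circ\varphi)\|_{H^2}\le C\|w\|_{H^2}\|f\|_{H^2}$. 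Substituting $w=K_a$ and $f=K_{\varphi(a)}$, and bounding the left-hand side from below by evaluating $w(f\circ\varphi)$ at $a$, all but one pair of kernel norms cancel, leaving $\|K_{\varphi(a)}\|_{H^2}\le C$, i.e. $|\varphi(a)|^2\le 1-C^{-2}$ uniformly on $\D$.

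For 2), the inclusion $H^\infty\subseteq\mathcal{M}(H^2,\varphi)$ is automatic. Assuming $\varphi$ is a finite Blaschke product and $w\in\mathcal{M}(H^2,\varphi)$, the closed graph theorem applied to the single-variable map $f\mapsto w(f\circ\varphi)$ produces $C_w$ with $\|w(f\circ\varphi)\|_{H^2}\le C_w\|f\|_{H^2}$; testing with $f=K_{\varphi(a)}$ and evaluating at $a$ gives the pointwise bound
\begin{displaymath}
|w(a)|\;\le\;C_w\,\sqrt{\frac{1-|\varphi(a)|^2}{1-|a|^2}}\,,\qquad a\in\D.
\end{displaymath}
Since a finite Blaschke product extends analytically across $\T$ with non-vanishing derivative, the ratio under the square root extends continuously to $\overline{\D}$ and is therefore uniformly bounded; hence $w\in H^\infty$.

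The main obstacle is the remaining converse of 2): if $\varphi$ is \emph{not} a finite Blaschke product, one must exhibit an unbounded $w\in\mathcal{M}(H^2,\varphi)$. I would split into cases. If $\varphi$ is not inner, there is a boundary set $E\subset\T$ of positive measure on which $|\varphi^\ast|\le 1-\delta$, so $f\circ\varphi^\ast$ is uniformly bounded on $E$ for every $f\in H^2$; one can then place a logarithmic singularity of some $w\in H^2\setminus H^\infty$ at a point of $E$, controlling the integral over $\T\setminus E$ via the Carleson-measure reformulation $w\in\mathcal{M}(H^2,\varphi)\Leftrightarrow(\varphi^\ast)_\ast(|w|^2dm)$ is a Carleson measure on $\overline{\D}$. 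If $\varphi$ is inner but not a finite Blaschke product, then either $\varphi^\ast$ fails to extend continuously to some $\xi\in\T$ or $\varphi$ has infinitely many Blaschke zeros; in either situation the ratio $(1-|\varphi(a)|^2)/(1-|a|^2)$ is unbounded near a suitable $\xi$, leaving room for an unbounded $w$ saturating the displayed pointwise inequality. Carrying out this construction carefully, in the spirit of Attele's original argument, is the delicate step.
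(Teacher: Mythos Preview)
The paper does not give its own proof of this theorem: it is quoted as a result of Attele, and the paper remarks only that ``a key tool for the most delicate second necessary condition is the use of inner and outer functions.'' In the generalization (Theorem~\ref{th-nopom}), the paper reproves the analogues of 1) and of the \emph{sufficient} direction of 2), but for the \emph{necessary} direction of 2) it reduces to the $H^2$ case and then invokes Attele's theorem as a black box.

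Your arguments for 1) and for the sufficiency in 2) are correct and are essentially the same reproducing-kernel computations the paper carries out in Theorem~\ref{th-nopom}. The only cosmetic difference is that the paper uses the adjoint mapping equation $(M_w C_\varphi)^\ast K_z = \overline{w(z)}\,K_{\varphi(z)}$ and reads off $\|K_{\varphi(z)}\|\le C$ (respectively $|w(z)|\le C\,\|K_z\|/\|K_{\varphi(z)}\|$) directly, whereas you obtain the same inequalities by point-evaluating $w\,(f\circ\varphi)$ at $a$; these are dual formulations of the same estimate.

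For the necessary direction of 2) your proposal is only a sketch, and you say so. Two comments. First, the Carleson-measure reformulation you invoke, namely $w\in\mathcal{M}(H^2,\varphi)$ iff the push-forward $(\varphi^\ast)_\ast(|w|^2\,dm)$ is a Carleson measure on $\overline\D$, is indeed the right tool and is exactly the device Attele uses; your non-inner case then goes through once one checks that a logarithmic singularity concentrated on a set where $|\varphi^\ast|\le 1-\delta$ contributes only to the harmless part of the measure. Second, in the inner-but-not-finite-Blaschke case, the observation that $(1-|\varphi(a)|^2)/(1-|a|^2)$ is unbounded is correct (this is the failure of an angular derivative), but it does not by itself produce a $w\in\mathcal{M}(H^2,\varphi)\setminus H^\infty$: one still has to manufacture an explicit $w$ and verify the Carleson condition, which is where the inner--outer machinery enters in Attele's argument. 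So the gap you identify is real, and the paper offers no shortcut around it.
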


A key tool for the most delicate second necessary condition is the use of inner and outer functions. We no longer have this tool at our disposal for the 
admissible spaces $H = H^{2}(\beta)$, but we can nevertheless state the following analog result.
\goodbreak

\begin{theorem} \label{th-nopom}  
Let $\varphi$ an analytic self-map of $\D$ and $H$ be an admissible Hilbert space on which $C_\phi$ acts boundedly.  We have: 
\begin{enumerate}
\setlength\itemsep {-0.05 em}

\item [$1)$] $\mathcal{M} (H^2, \varphi) \subseteq \mathcal{M} (H,\varphi)$; 

 \item [$2)$] $\mathcal{M} (H,\varphi) = H$ if and only if $\Vert \varphi \Vert_\infty < 1$;
 
\item [$3)$] $\mathcal{M} (H, \varphi) = H^\infty$ if and only if $\varphi$ is a finite Blaschke product.
\end{enumerate}
\end{theorem}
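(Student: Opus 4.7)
I would address the three items in the order (2), (1), (3), since (3) relies on (1).

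\emph{Part (2).} The direction $(\Leftarrow)$ is immediate: if $\Vert\varphi\Vert_\infty < 1$, then for every $g\in H$ the composition $g\circ\varphi$ extends analytically across $\overline{\D}$, hence belongs to $H^\infty = \mathcal{M}(H)$ by admissibility~$(ii)$, so that $w(g\circ\varphi)\in H$ for any $w\in H$. For $(\Rightarrow)$, the closed graph theorem applied in each variable, combined with the Banach--Steinhaus theorem, yields $C>0$ with $\Vert f(g\circ\varphi)\Vert_H \leq C\Vert f\Vert_H\Vert g\Vert_H$ for all $f,g\in H$. Testing with the normalized kernels $f=K_a/\Vert K_a\Vert_H$ and $g=K_{\varphi(a)}/\Vert K_{\varphi(a)}\Vert_H$ and using the reproducing property at $a$ gives
\[
\Vert K_a\Vert_H \, \Vert K_{\varphi(a)}\Vert_H = |f(a)\,g(\varphi(a))| = |\langle f(g\circ\varphi),K_a\rangle| \leq C\,\Vert K_a\Vert_H,
\]
hence $\Vert K_{\varphi(a)}\Vert_H \leq C$ for every $a\in\D$. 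Since admissibility~$(i)$ forces $\Vert K_z\Vert_H\to\infty$ as $|z|\to 1$ (as noted right after the definition), this prevents $\varphi(\D)$ from approaching $\partial\D$.

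\emph{Part (1).} Let $w\in\mathcal{M}(H^2,\varphi)$; the closed graph theorem makes $T := M_w C_\varphi$ bounded on $H^2$. Using admissibility~$(iii)$ and the automorphism-reduction described in the Remark after Theorem~\ref{theo compos op}, I reduce to the case $\varphi(0)=0$. Then $\varphi^j$ vanishes at $0$ to order at least $j$, so the Taylor expansion of $w\varphi^j$ starts at degree $j$ and the matrix of $T$ in the monomial basis $\{z^n\}$ is lower-triangular. Theorem~\ref{theo compos op-bis} then transfers boundedness of $T$ from $H^2$ to $H$, proving $w(g\circ\varphi)\in H$ for every $g\in H$, i.e.\ $w\in\mathcal{M}(H,\varphi)$.

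\emph{Part (3).} For $(\Rightarrow)$, suppose $\varphi$ is not a finite Blaschke product. Attele's Theorem~\ref{th-attele} supplies some $w\in\mathcal{M}(H^2,\varphi)\setminus H^\infty$; Part~(1) upgrades this to $w\in\mathcal{M}(H,\varphi)=H^\infty$, a contradiction. For $(\Leftarrow)$, take $w\in\mathcal{M}(H,\varphi)$; closed graph makes $M_w C_\varphi$ bounded on $H$, and the adjoint satisfies $(M_w C_\varphi)^*K_b = \overline{w(b)}K_{\varphi(b)}$, yielding
\[
|w(b)| \leq \Vert M_w C_\varphi\Vert \, \Vert K_b\Vert_H / \Vert K_{\varphi(b)}\Vert_H.
\]
For a finite Blaschke product, the identity $(1-|b_\alpha(z)|^2)/(1-|z|^2) = (1-|\alpha|^2)/|1-\bar\alpha z|^2$ on each factor $b_\alpha(z)=(\alpha-z)/(1-\bar\alpha z)$, multiplied over the finitely many factors, shows that $(1-|\varphi(z)|)/(1-|z|)$ is uniformly bounded on $\D$. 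Admissibility~$(iv)$ then bounds $\Vert K_b\Vert_H/\Vert K_{\varphi(b)}\Vert_H$ by a constant for $b$ near $\partial\D$; combined with the boundedness of the analytic $w$ on compact subsets of $\D$, this gives $w\in H^\infty$.

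The main obstacle is Part~(1): performing the reduction to $\varphi(0)=0$ cleanly and checking that Theorem~\ref{theo compos op-bis} genuinely applies in the admissible setting, which tacitly requires identifying $H$ with a weighted space $H^2(\gamma)$ in which the monomials form an orthogonal basis (a consequence of rotation-invariance of the norm, possibly after renorming by averaging over rotations). The Blaschke estimate in Part~(3)$(\Leftarrow)$ is elementary once one commits to finitely many factors, but deserves care at points where $\varphi$ has a critical boundary behavior.
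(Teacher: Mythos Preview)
Your proof is correct and follows essentially the same route as the paper: the same automorphism reduction and lower-triangular transfer for Part~(1), the same kernel-testing for Part~(2) (the paper tests $w=K_z$ in the adjoint identity $(M_w C_\varphi)^\ast K_z=\overline{w(z)}\,K_{\varphi(z)}$ rather than your pair of normalized kernels, but the conclusion $\Vert K_{\varphi(z)}\Vert_H\le C$ is reached identically), and the same appeal to Attele plus condition~$(iv)$ for Part~(3). Your flagged concern about Part~(1) is legitimate and shared by the paper itself: the paper also invokes Theorem~\ref{theo compos op-bis} without justifying why a general admissible $H$ fits the $H^2(\gamma)$ framework, so your argument is neither more nor less complete than the original on this point.
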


Note that the assumption that $C_\phi$ acts boundedly on $H$ is automatically satisfied when $H = H^2 (\beta)$ with $\beta$ decreasing, by 
Theorem~\ref{theo compos op}. 
\goodbreak
\begin{proof} \par
$1)$ Suppose first that $\varphi (0) = 0$. Let $w \in \mathcal{M} (H^2, \varphi)$. The weighted composition operator $M_w C_\varphi$ is bounded on $H^2$, 
and moreover lower triangular on the canonical basis; applying Theorem~\ref{theo compos op-bis},~1), we get that $M_w C_\varphi$ is bounded on $H$ as well,  
that is $w \in \mathcal{M} (H,\varphi)$. 

In the general case, let $\varphi (0) = a$, so that $(\varphi_a \circ \varphi) (0) = 0$.  Property $(iii)$  implies that 
\begin{displaymath} 
\mathcal{M} (H^2,\varphi) = \mathcal{M} (H^2, \varphi_a \circ \varphi) \subseteq \mathcal{M} (H,\varphi_a \circ \varphi) = \mathcal{M} (H,\varphi)  \, ,
\end{displaymath} 
since $f \in H^2$ if and only if $f \circ \varphi_a\in H^2$ and $f \in H$ if and only if $f \circ \varphi_a \in H$. 
\smallskip

$2)$ The necessary condition is proved as in \cite{Attele-2} for $H^2$; we recall some details. We start from the (obvious, but useful) mapping equation:
\begin{equation} \label{maeq} 
(M_w C_\varphi)^{\ast} (K_z) = \overline{w (z)} \, K_{\varphi (z)} \, .
\end{equation}
The assumption implies the existence of a constant $C$ such that:
\begin{displaymath} 
\qquad \qquad \qquad \qquad \Vert M_w C_\varphi \Vert _{\mathcal{L}(H)}\leq C \, \Vert w \Vert_H \qquad \text{ for all } w \in H \, .
\end{displaymath} 
As a consequence, for given $z \in \D$:
\begin{displaymath} 
\Vert (M_{w} C_{\varphi})^{\ast} (K_z) \Vert_{H} \leq C \, \Vert w \Vert_H \Vert K_z \Vert_H \, ,
\end{displaymath} 
that is, in view of \eqref{maeq}:
\begin{equation} \label{maeqbis} 
|w (z)| \, \Vert K_{\varphi (z)} \Vert_{H} \leq C \, \Vert w \Vert_H \Vert K_z \Vert_H \, .
\end{equation} 
Testing this inequality with $w = K_z$ and simplifying by $\Vert K_z \Vert_{H}^2$, we get that $\Vert K_{\varphi (z)} \Vert_{H}\leq C$. 
Since $\lim_{|a|\to 1^-} \Vert K_a \Vert_H = \infty$, as a consequence of $(i)$, this implies that $\Vert \varphi \Vert_\infty  < 1$, by this same consequence. 
\smallskip

For the sufficient condition, observe that if $\| \phi \|_\infty < 1$, then $f \circ \phi \in H^\infty$ for all $f \in H$.  Since $\mathcal{M} (H) = H^\infty$, 
according to $(iv)$, we get that $f \circ \phi \in \mathcal{M} (H)$ and therefore $w (f \circ \phi) = M_{f \circ \phi} \, w \in H$ for all $w \in H$. That means 
that $H \subseteq {\mathcal M} (H, \phi)$. Therefore, by \eqref{inclusion relative multipliers}, we have ${\mathcal M} (H, \phi) = H$.
\smallskip

$3)$ The sufficient condition goes as follows: finite Blaschke products $\varphi$ clearly satisfy (and actually are characterized by: see \cite{Attele-2}):
\begin{displaymath} 
R_\varphi := \sup_{z \in \D} \frac{1 - |\varphi (z)|}{1 - |z|} < \infty \, . 
\end{displaymath} 
Let now $w \in \mathcal{M} (H, \varphi)$, so that $C := \Vert M_w C_\varphi \Vert < \infty$. We may assume that $\Vert w \Vert_H \leq 1$. The mapping 
equation \eqref{maeqbis} gives, for $z \in \D$:
\begin{displaymath} 
|w (z)| \, \Vert K_{\varphi (z)} \Vert_H \leq C \, \Vert K_{z} \Vert_H \, .
\end{displaymath} 
By $(ii)$, this implies that, for $|z|$ close enough to $1$:
\begin{displaymath} 
|w (z) | \leq C \, \frac{\Vert K_z \Vert_H}{\Vert K_{\varphi (z)} \Vert_H} \leq C \, h\bigg( \frac{1 - |\varphi (z)|}{1 - |z|} \bigg) 
\leq C \, h (R_\varphi) \, .
\end{displaymath} 
This means that $w \in H^\infty$. 
\smallskip

Finally, for the  necessary condition, assume that $\mathcal{M} (H, \varphi) = H^\infty$. Then $\mathcal{M} (H^2, \varphi) = H^\infty$, by $1)$, and then 
$\varphi$ is a finite Blaschke product by Attele's theorem (Theorem~\ref{th-attele}). 
\end{proof}
\smallskip

\noindent{\bf Remark.} In Proposition~\ref{prop admissible}, we assume that the automorphisms of $\D$ induce bounded composition operators on 
$H^2 (\beta)$. It is known (\cite[Theorem~1]{KRMA}) that this is not always the case. Let us give a simpler proof, for a particular case. 
Let $\beta_n = \exp (- \sqrt n)$, and consider the space $H^2 (\beta)$. We then have, for $0 < r = \e^{- \eps} < 1$:
\begin{displaymath} 
\Vert K_r \Vert^2 = \sum_{n = 0}^\infty r^{2 n} \exp (\sqrt n) = \sum_{n = 0}^\infty \e^{- 2 n \varepsilon} \exp (\sqrt n) =: S (\varepsilon) \, .
\end{displaymath} 
We easily see (using e.g. the Euler-MacLaurin formula) that, when $\varepsilon \to 0^{+}$:  
\begin{displaymath}  
S (\varepsilon) \sim I (\varepsilon) := \int_{0}^\infty \exp (\sqrt{t} - 2 \, \varepsilon t) \, dt
= (4 \varepsilon^{2})^{- 1} \int_{0}^\infty \exp \bigg( \frac{\sqrt{x} - x}{2 \, \varepsilon} \bigg) \, dx \, .
\end{displaymath} 
We use the Laplace theorem (\cite{DIE}   p.125) on the equivalence of integrals:
\begin{displaymath} 
\int_{0}^\infty \e^{A \varphi (x)} \, dx \sim \sqrt{2 \pi(|\varphi '' (x_0)|)^{- 1}}\, A^{- 1/2} \e^{A \varphi (x_0)} \, , \quad \text{as } A \to \infty \, ,
\end{displaymath} 
and apply it to $A = 1 / 2\varepsilon$ and to the function $\varphi (x) = \sqrt x - x$, which takes its maximum at $x_0 = 1/4$, with $\varphi (x_0) = 1/4$. 
We get that:
\begin{displaymath} 
S (\varepsilon) \approx \varepsilon^{- 3/2} \exp \bigg( \frac{1}{8\varepsilon}\bigg) \approx (1 - r)^{- 3/2} \exp\bigg( \frac{1}{8 (1 - r)} \bigg) \, .
\end{displaymath}
Now, consider the automorphism $T$ of $\D$ given by \eqref{automorphism}. For $r < 1$, we have 
$1 - T (r) \sim (1 - r) T ' (1) = (1 - r) / 3$; so:
\begin{displaymath} 
\frac{1}{1 - T (r)} - \frac{1}{1 - r} = \frac{1}{1 - r} \bigg( \frac{1 - r}{1 - T (r)} - 1 \bigg) \sim  \frac{1}{1 - r} \bigg( \frac{1}{T ' (1)} - 1\bigg) 
=  \frac{2}{1 - r} \, \raise 1 pt \hbox{,}
\end{displaymath} 
and that implies that:
\begin{displaymath}  
\frac{\Vert K_{T (r)} \Vert^2}{\Vert K_r\Vert^2} \approx 
\exp \bigg[ \frac{1}{8}\bigg(\frac{1}{1 - T (r)} - \frac{1}{1 - r} \bigg) \bigg] 
\converge_{r \to 1^-} \infty \, .
\end{displaymath}
Since $K_{T (r)} = C_T^\ast (K_r)$, this implies that $C_T^\ast$, and hence $C_T$, is not bounded on  $H^{2}(\beta)$.
\goodbreak

%%%%%%%%%%%%%%%%%%%%%%%%%%%%%%%%%%%%%%%%%%%%%%%%%%%%%%%%%%%%%%%%%%%%%%%%%%%%%%%
%%%%%%%%%%%%%%%%%%%%%%%%%%%%%%%%%%%%%%%%%%%%%%%%%%%%%%%%%%%%%%%%%%%%%%%%%%%%%%%
\section{\!\! Schatten classes for Hardy spaces and Bergman spaces} \label{Hardy-Bergman} 

We know that if a composition operator $C_\phi$ is compact on the Hardy space $H^2$, then it is compact on the Bergman space ${\mathfrak B}^2$ 
(see \cite[Proposition~2.7 and Theorem~3.5]{MacCluer-Shapiro}. Theorem~\ref{theo pas Bergman-Schatten} below shows that we cannot expect better. 
\smallskip

Let us begin by a preliminary result. Recall that the $2$-Carleson function of the analytic map $\phi \colon \D \to \D$ is:
\begin{equation} 
\rho_{\phi, 2} (h) = \sup_{\xi \in \T} A_\phi \big( W (\xi, h) \big) \, , 
\end{equation} 
where $A$ is the normalized area measure on $\D$, $A_\phi$ is the pull-back measure of $A$ by $\phi$, i.e. $A_\phi (B) = A [\phi^{- 1} (B)]$ for all Borel 
sets $B \subseteq \D$. 
It is well-known (see \cite{Hastings}) that $\rho_{\phi, 2} (h) = {\rm O}\, (h^2)$, due to the fact that all 
composition operators $C_\phi$ are bounded on ${\mathfrak B}^2$, and that $C_\phi$ is compact on ${\mathfrak B}^2$ if and only if 
$\rho_{\phi, 2} (h) = {\rm o}\, (h^2)$. For Schatten classes, we have the following result. 

\begin{proposition} \label{lemme Schattenitude} 
If the composition operator $C_\phi \colon {\mathfrak B}^2 \to {\mathfrak B}^2$ is in the Schatten class $S_p ({\mathfrak B}^2)$ for some $p \in (0, \infty)$, 
then:
\begin{equation} \label{formule Schattenitude} 
\rho_{\phi, 2} (h) = {\rm o}\, \bigg( h^2 \Big( \log \frac{1}{h} \Big)^{- 2 / p} \bigg) \, .
\end{equation} 
\end{proposition}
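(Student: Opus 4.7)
The plan is to deduce the estimate from Luecking's characterization of Schatten class composition operators on the Bergman space (recalled in Section~2.3), combined with a dyadic covering by Hastings--Luecking boxes and Hölder's inequality. Write $\alpha_{m,j} := (2^{2m} A_\phi(R_{m,j}))^{p/2}$, so that the hypothesis $C_\phi \in S_p(\mathfrak{B}^2)$ translates into the summability $\sum_{m,j} \alpha_{m,j} < \infty$, and hence $\varepsilon_n := \sum_{m > n,\, j} \alpha_{m,j} \to 0$ as $n \to \infty$. We also have $A_\phi(R_{m,j}) = 2^{-2m} \alpha_{m,j}^{2/p}$.

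Fix $\xi \in \T$ and $h = 2^{-n}$. A direct geometric verification shows that the Carleson window $W(\xi, h)$ is contained in the union of the $R_{m,j}$ with $m > n$ and $j$ in some set $J_m(\xi)$ of cardinality $\lesssim 2^{m-n}$. Therefore
\[
A_\phi(W(\xi, h)) \;\leq\; \sum_{m > n}\; \sum_{j \in J_m(\xi)} 2^{-2m} \alpha_{m,j}^{2/p} \, .
\]
For $p \geq 2$ I would apply Hölder's inequality with exponents $p/2$ and $p/(p-2)$ to this double sum. The first factor collects a geometric series: using $|J_m(\xi)| \lesssim 2^{m-n}$, one computes $\sum_{m > n,\, j \in J_m(\xi)} 2^{-2mp/(p-2)} \lesssim 2^{-2np/(p-2)}$, which after raising to the power $(p-2)/p$ yields $\lesssim 2^{-2n}$. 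The second factor is bounded by $\varepsilon_n^{2/p}$. This already gives the clean estimate
\[
A_\phi(W(\xi, h)) \;\leq\; C\, 2^{-2n}\, \varepsilon_n^{2/p}
\]
and in particular the previously known compactness rate $\rho_{\phi,2}(h) = o(h^2)$. The case $0 < p < 2$ is reduced to $p = 2$ via the Schatten inclusion $S_p \subseteq S_2$.

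The main obstacle is the extra factor $(\log 1/h)^{-2/p}$: the plain Hölder bound above only uses $\varepsilon_n \to 0$, whereas we need an improvement equivalent to $\varepsilon_n^{2/p} = o(n^{-2/p})$ (i.e.\ $n\, \varepsilon_n \to 0$), and this does not follow from mere summability. To upgrade the estimate I would combine the Luecking bound with the fact that $C_\phi \in S_p(\mathfrak{B}^2)$ also forces $a_n(C_\phi) = o(n^{-1/p})$, and with the test-function lower bound
\[
a_n(C_\phi, \mathfrak{B}^2)^2 \;\gtrsim\; \frac{\rho_{\phi,2}(h_n)}{h_n^2} \, ,
\]
valid for a sequence $h_n$ decaying geometrically in $n$ (this is obtained by applying $C_\phi$ to normalized reproducing kernels $k_a$ at a hyperbolic lattice of points $a$ approaching $\partial \D$, exploiting their almost-orthogonality). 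Combining the two gives $\rho_{\phi,2}(h_n) \lesssim h_n^2 \cdot o(n^{-2/p})$; after identifying $n \asymp \log(1/h_n)$ and interpolating for intermediate $h$, we obtain the desired $o(h^2 (\log 1/h)^{-2/p})$ rate. The delicate step is the quantitative geometric lower bound for $a_n$ at scale $h_n = r^n$ --- it is precisely there that the logarithm enters.
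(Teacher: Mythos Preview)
Your first approach via H\"older is fine as far as it goes (for $p\ge 2$), and you correctly diagnose that it only yields $\rho_{\phi,2}(h)=o(h^2)$. But the second approach, where the actual work is supposed to happen, is not a proof: the claimed inequality
\[
a_n(C_\phi)^2 \;\gtrsim\; \frac{\rho_{\phi,2}(h_n)}{h_n^{\,2}} \qquad \text{with } h_n=r^n
\]
is neither proved nor referenced, and it is not a standard fact. Reproducing-kernel arguments give lower bounds of the type $a_n\ge \inf_j \|K_{\phi(u_j)}\|/\|K_{u_j}\|\cdot I_v^{-2}$ for $n$ \emph{chosen} points $u_j$; turning this into a universal bound in terms of the Carleson function $\rho_{\phi,2}$ at a single dyadic scale $h_n=r^n$ requires knowing how many well-separated preimages $\phi$ has near the extremal window, which is exactly the kind of structural information one does not control in general. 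You yourself flag this as ``the delicate step''; as written, it is a restatement of the conclusion, not a proof of it. (A side remark: your reduction ``for $p<2$ use $S_p\subseteq S_2$'' is also wrong, since it would only deliver the exponent $-1$ in place of $-2/p$; but the first approach was already acknowledged as insufficient.)

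The paper's argument avoids the approximation-number route entirely. From Luecking's criterion one gets directly that the sequence
\[
u_n := \Big(\tfrac{\rho_{\phi,2}(2^{-n})}{4^{-n}}\Big)^{p/2}
\]
is summable. The missing ingredient is a \emph{regularity} property of the Bergman Carleson function, proved elsewhere by the authors: $\rho_{\phi,2}(\eps h)\le C_0\,\eps^2\,\rho_{\phi,2}(h)$ for $0<\eps\le 1$. This forces $u_n\le C_0^{p/2}u_k$ whenever $n\ge k$, and then an elementary Tauberian lemma (``$\sum u_n<\infty$ plus $u_n\lesssim u_k$ for $n\ge k$ implies $u_n=o(1/n)$'') gives $n\,u_n\to 0$, which is exactly the logarithmic gain you were missing. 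The whole point is that the extra $(\log 1/h)^{-2/p}$ comes from this quasi-monotonicity of $u_n$, not from any lower bound on singular numbers.
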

\begin{proof}
We follow the proof of \cite[Proposition~3.4]{LLQR-2008}. By \cite[Corollary~2]{Luecking} and \cite[Proposition~3.3]{LLQR-2008}, 
$C_\phi \in S_p ({\mathfrak B}^2)$ if and only if:
\begin{equation} \label{condition Luecking}
\sum_{n = 1}^\infty \bigg( \sum_{j = 0}^{2^n - 1} 4^{n p / 2} [ A_\phi (W_{n, j}) ]^{p / 2}\bigg)  < \infty \, ,
\end{equation} 
where $W_{n, j} = W (\e^{2 j i \pi / 2^n}, 2^{- n})$.

Observing that, for $h = 2^{- n}$, we have:
\begin{displaymath}
\big[\rho_{\phi, 2} (2^{- n}) \big]^{p/2} \leq \sum_{j = 0}^{2^n - 1} \big[ A_\phi (W_{n, j}) \big]^{p/2}  \, ,
\end{displaymath}
\eqref{condition Luecking} yields:
\begin{displaymath}
\sum_{n = 1}^\infty \big[ \rho_{\phi, 2} (2^{- n})\big]^{p/2} 4^{n p/2} < +\infty \, .
\end{displaymath}

By \cite[Theorem~3.1]{LLQR-TAMS}, we have a constant $C_0 > 0$ such that:
\begin{displaymath} 
\rho_{\phi, 2} (\eps h) \leq C_0 \, \eps^2 \rho_{\phi, 2} (h) 
\end{displaymath} 
for $0 < \eps \leq 1$ and $0 < h < 1$. Hence, if we set:
\begin{displaymath} 
u_n = \bigg( \frac{\rho_{\phi, 2} (2^{- n})}{4^{- n}} \bigg)^{p / 2} \, \raise 1 pt \hbox{,}
\end{displaymath} 
we have, for $n \geq k$:
\begin{displaymath} 
u_n \leq C_0^{p / 2} \, u_k \, . 
\end{displaymath} 
The following lemma, whose proof is postponed, then shows that:
\begin{equation}\label{eq condition necessaire}
n \, \bigg(\frac{\rho_{\phi, 2} (2^{- n})}{4^{- n}}\bigg)^{p/2} \converge_{n \to \infty} 0 \, .
\end{equation}
\begin{lemma} \label{classical lemma} 
Let $\sum u_n$ be a convergent series of positive numbers such that $u_n \leq C \, u_k$ for $n \geq k$, for some positive constant $C$. Then 
$u_n =  {\rm o}\, (1 / n)$. 
\end{lemma}

To finish the proof, it remains to consider, for every $h \in (0,1/2)$, the integer $n$ such that 
$2^{- n - 1} < h \leq 2^{- n}$; then \eqref{eq condition necessaire} gives:
\begin{displaymath}
\lim_{h \to 0^+} \bigg( \frac{\rho_{\phi, 2} (h)}{h^2} \bigg)^{p/2} \log(1/h) = 0 \, ,
\end{displaymath}
as announced. 
\end{proof}
\begin{proof} [Proof of Lemma~\ref{classical lemma}] 
It is classical. We recall it for convenience. Let:
\begin{displaymath} 
v_n = \sum_{n / 2 < k \leq n} u_k \, .
\end{displaymath} 
Since the series $\sum u_n$ converges, on the one hand, we have $v_n \converge_{n \to \infty} 0$, and on the other hand: 
\begin{displaymath} 
v_n \geq \frac{n}{2} \, C^{- 1} u_n \, . \qedhere
\end{displaymath} 
\end{proof}
\goodbreak

\begin{theorem} \label{theo pas Bergman-Schatten} 
There exists a symbol $\phi$ for which the composition operator $C_\phi$ is compact on the Hardy space $H^2$, but is not in any Schatten class 
$S_p ({\mathfrak B}^2)$ of the Bergman space ${\mathfrak B}^2$ with $p < \infty$.
\end{theorem}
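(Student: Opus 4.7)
The plan is to rely on Proposition~\ref{lemme Schattenitude} (the log-loss necessary condition for membership in $S_p(\mathfrak{B}^2)$) together with the MacCluer criterion for compactness on $H^2$. Concretely, it suffices to produce a symbol $\phi$ satisfying both
\begin{equation*}
m_\phi\bigl(W(\xi,h)\bigr) = {\rm o}(h) \qquad \text{uniformly in } \xi \in \T \, ,
\end{equation*}
and, for every $p > 0$,
\begin{equation*}
\limsup_{h \to 0^+} \frac{\rho_{\phi,2}(h)\bigl(\log 1/h\bigr)^{2/p}}{h^2} > 0 \, .
\end{equation*}
The first condition makes $C_\phi$ compact on $H^2$ by the Carleson-window criterion, and the second, contrapositive to Proposition~\ref{lemme Schattenitude}, excludes $C_\phi$ from every Schatten class of $\mathfrak{B}^2$. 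A simple sufficient form of the second condition is the existence of a sequence $(\xi_n, h_n) \in \T \times (0,1)$ with $h_n \to 0$ and $A_\phi\bigl(W(\xi_n, h_n)\bigr) \gtrsim h_n^2 / \psi(\log 1/h_n)$ for some slowly growing function $\psi$ (say $\psi = \log\log$), so that the ratio with $h_n^2$ beats any negative power of $\log(1/h_n)$.

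The construction I would attempt is a univalent Riemann map $\phi$ of $\D$ onto a simply connected domain $\Omega \subset \D$ with countably many cusp-shaped contacts with $\partial\D$ placed at well-separated points $\xi_n \in \T$, one for each scale $h_n = 2^{-n}$. Each finger is modelled on a thin triangular cusp tangent to $\partial\D$ at $\xi_n$, of tangential width roughly $h_n / \psi(n)$ and radial depth $h_n$. By the area formula, the Bergman pull-back $A_\phi(W(\xi_n, h_n))$ picks up the Euclidean area of that cusp (after a bounded Koebe distortion), giving the desired $h_n^2/\psi(n)$ lower bound, whereas the one-dimensional boundary contribution of each cusp to $m_\phi$ comes from a single tangential contact point and can be kept $ = {\rm o}(h_n)$. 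Summing over cusps, one obtains the uniform $H^2$-compactness estimate.

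The main technical obstacle is to carry out the harmonic-measure estimate $m_\phi(W(\xi,h)) = {\rm o}(h)$ \emph{uniformly} in $\xi$, not only at the special points $\xi_n$: the conformal pull-back by $\phi$ distributes mass over the whole boundary, and one must control the influence at window $W(\xi,h)$ coming from cusps at other scales. Standard tools are Koebe distortion for $\phi$ and Beurling-type estimates for harmonic measure on cuspidal domains; with enough angular separation and summable parameters $1/\psi(n)$, these estimates should close. A secondary technical point is that one must check that the area estimate in each cusp survives the conformal distortion (no pathological pinching), which is again handled by Koebe. Once the geometry is fixed, the two estimates (i)--(ii) follow by direct computation and Proposition~\ref{lemme Schattenitude} finishes the proof.
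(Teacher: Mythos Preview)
Your strategic framework is exactly right and matches the paper: use the necessary condition of Proposition~\ref{lemme Schattenitude} to block Schatten membership on $\mathfrak{B}^2$, while verifying MacCluer's $o(h)$ criterion for compactness on $H^2$. The target lower bound $\rho_{\phi,2}(h) \gtrsim h^2/(\log\log 1/h)^2$ is also precisely what the paper obtains.

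Where you diverge sharply from the paper is in the construction of $\phi$. You propose a Riemann map onto a domain with \emph{infinitely many} cusps at separated boundary points $\xi_n$, one per scale. The paper instead uses a single explicit formula, a variant of the Shapiro--Taylor map: with $V_\eps = \{|z| < \eps,\ \Re z > 0\}$, set $f(z) = z\,\log(-\log z)$, let $g \colon \D \to V_\eps$ be conformal with $g(1)=0$, and take $\phi = \exp(-f\circ g)$. This map has a \emph{single} contact point with $\T$ (at $\xi=1$), and its $H^2$-compactness is already established in the authors' earlier work. A short computation of $\Re f$ and $\Im f$ on $V_\eps$ then gives $A_\phi\bigl(W(1,h)\bigr) \gtrsim h^2/(\log\log 1/h)^2$ directly.

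The payoff of the paper's approach is that it completely sidesteps the obstacle you yourself flag as ``the main technical obstacle'': the uniform-in-$\xi$ harmonic-measure estimate $m_\phi(W(\xi,h)) = o(h)$ across infinitely many interacting cusps. With one contact point and an explicit formula, that uniformity is trivial away from $\xi=1$ and handled by direct asymptotics near $\xi=1$. Your multi-cusp construction is not wrong in spirit, but the Beurling/Koebe control you invoke is genuinely delicate to make uniform and summable, and your proposal stops at ``should close''. The lesson here is that one contact point is enough --- the slow function $\psi$ governing the area-versus-harmonic-measure discrepancy can be built into the local geometry at a single cusp rather than distributed over countably many.
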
 
\begin{proof}
We use a variant of the Shapiro-Taylor map (\cite[Section~4]{Shapiro-Taylor}) introduced in \cite[Theorem~5.6]{LLQR-2008} for showing that there is 
a compact composition operator on $H^2$ which is in no Schatten class $S_p (H^2)$ for $p < \infty$. Let:
\begin{equation} \label{V eps}
V_\eps = \{ z \in \C \tq \Re z > 0 \text{ and } |z| < \eps \} 
\end{equation} 
We set:
\begin{equation} 
f (z) = z \log ( - \log z) \, ,
\end{equation} 
where $\log$ is the principal determination of the logarithm. For $\eps > 0$ small enough, we have $\Re f (z) > 0$  for $z \in V_\eps$. Let 
$g \colon \D \to V_\eps$ be the conformal map from $\D$ onto $V_\eps$ sending $\T = \partial \D$ onto $\partial V_\eps$, and with $g (1) = 0$ 
and $g' (1) = - \eps / 4$. 
Explicitly, $g$ is the composition of the following maps: a) $\sigma \colon z \mapsto -z$ from $\D$ onto itself; 
b) $\gamma \colon z \mapsto \frac{z + i}{1 + iz}$ from $\D$ onto $P=\{ \Im z > 0\}$; 
c) $s \colon z \mapsto \sqrt z$ from $P$ onto $Q =\{\Re z > 0\,, \Im z >0\}$; 
d) $\gamma^{-1} \colon z \mapsto \frac{ z -i }{ 1 -iz}$ from $Q$ onto $V = \{ |z | < 1\,, \Re z >0\}$, and 
e) $h_\eps \colon z \mapsto \eps z$ from $V$ onto $V_\eps$.

We then set:
\begin{equation} \label{Shapiro-Taylor map}
\phi = \exp ( - f \circ g) \, .
\end{equation} 

This analytic function $\phi$ maps $\D$ into itself and we proved in \cite[Theorem~5.6]{LLQR-2008} that $C_\phi$ is compact on $H^2$. 

For $z = r \e^{i \alpha} \in V_\eps$, we have (see \cite[proof of Theorem~5.6]{LLQR-2008}):
\begin{align} 
\Re f (z) & = r \log \log \frac{1}{r} \bigg( \cos \alpha + \frac{\alpha \sin \alpha}{\log (1 / r) \log \log (1/r)} \bigg) \\
& \phantom{\Re f (z)  = r \log \log \frac{1}{r} \bigg( \cos \alpha + } + {\rm o}\, \bigg( \frac{1}{\log (1 /r) \log \log (1/r)} \bigg) \notag \\
\Im f (z) & = r \log \log \frac{1}{r} \bigg( \sin \alpha - \frac{\alpha \cos \alpha}{\log (1 / r) \log \log (1/r)} \bigg) \\
& \phantom{\Re f (z)  = r \log \log \frac{1}{r} \bigg( \cos \alpha + } + {\rm o}\, \bigg( \frac{1}{\log (1 /r) \log \log (1/r)} \bigg) \, . \notag
\end{align} 
It follows that:
\begin{equation}
0  < \Re f (z) \lesssim r \log \log \frac{1}{r}  \qquad \text{and} \qquad |\Im f (z)| \lesssim  r \log \log \frac{1}{r} \, \cdot
\end{equation}

Now, assume that $r \leq h / \log \log (1 / h)$. Then $r \log \log (1 / r) \lesssim h$. Since $g' (1) \neq 0$,  $g$ is bi-Lipschitz in a neighborhood of $1$; hence 
$|\phi (u) | \approx 1 - \Re f [g (u)]$ and $|\arg \phi (u)| \approx |\Im f [g (u)] |$, so we have:
\begin{displaymath} 
A_\phi \big( W (1, h) \big) \gtrsim \big( h / \log \log (1 / h) \big)^2. 
\end{displaymath} 
Therefore:
\begin{equation} 
\rho_{\phi, 2} (h) \geq A_\phi \big( W (1, h) \big) \gtrsim \frac{h^2}{ \big(\log \log (1 / h) \big)^2}  \, \raise 1 pt \hbox{,}
\end{equation} 
so \eqref{formule Schattenitude} cannot be satisfied. Hence $C_\phi \notin S_p ({\mathfrak B}^2)$, whatever $p < \infty$.
\end{proof}
\medskip\goodbreak

When $C_\phi \in S_p (H^2)$, we actually have a better behavior on the Bergman space.
\goodbreak

\begin{theorem} \label{Sp/2 Bergman}
For every $p > 0$, we have $C_\phi \in S_{p / 2} ({\mathfrak B}^2)$ when $C_\phi \in S_p (H^2)$.
\end{theorem}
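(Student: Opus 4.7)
The plan is to invoke Luecking's characterization of Schatten class membership in its continuous, Nevanlinna-counting-function form, and to exploit a simple pointwise comparison between the Hardy and Bergman counting functions. Setting $N_\phi(w) = \sum_{z \in \phi^{-1}(w)} \log(1/|z|)$ and $N_{\phi, 2}(w) = \sum_{z \in \phi^{-1}(w)} (1 - |z|^2)^2$, Luecking's theorem in continuous form asserts that $C_\phi \in S_p(H^2)$ if and only if
\begin{displaymath}
\int_\D \bigg( \frac{N_\phi(w)}{1 - |w|^2} \bigg)^{p/2} \frac{dA(w)}{(1 - |w|^2)^2} < \infty \, ,
\end{displaymath}
and $C_\phi \in S_{p/2}(\mathfrak B^2)$ if and only if
\begin{displaymath}
\int_\D \bigg( \frac{N_{\phi, 2}(w)}{(1 - |w|^2)^2} \bigg)^{p/4} \frac{dA(w)}{(1 - |w|^2)^2} < \infty \, .
\end{displaymath}
These continuous statements are equivalent to the discrete Hastings--Luecking conditions recalled in Section~2.

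The decisive ingredient will be the pointwise inequality $N_{\phi, 2}(w) \leq 4 N_\phi(w)^2$. To see it, I would first note that $1 - |z|^2 \leq 2(1 - |z|) \leq 2 \log(1/|z|)$, so $(1 - |z|^2)^2 \leq 4 \log^2(1/|z|)$. Combining with the elementary fact $\sum_i a_i^2 \leq \bigl(\sum_i a_i\bigr)^2$ for nonnegative $a_i$ and summing over $z \in \phi^{-1}(w)$ gives
\begin{displaymath}
N_{\phi, 2}(w) \leq 4 \sum \log^2(1/|z|) \leq 4 \bigl( \sum \log(1/|z|) \bigr)^2 = 4 N_\phi(w)^2 \, .
\end{displaymath}

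Dividing by $(1 - |w|^2)^2$ and raising to the $p/4$-th power then yields the pointwise bound $\bigl(N_{\phi, 2}(w)/(1 - |w|^2)^2\bigr)^{p/4} \leq 4^{p/4} \bigl(N_\phi(w)/(1 - |w|^2)\bigr)^{p/2}$. Integrating against the invariant area $dA(w)/(1 - |w|^2)^2$ and invoking the $S_p(H^2)$ hypothesis yields immediately the Luecking condition for $S_{p/2}(\mathfrak B^2)$, whence $C_\phi \in S_{p/2}(\mathfrak B^2)$.

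The main technical obstacle is the passage between the discrete Hastings--Luecking formulation stated in the paper and the Nevanlinna counting function form invoked above. One option is to cite the equivalence as a standard fact; another is to rerun the argument discretely, with $2^n m_\phi(R_{n, j})$ and $2^{2n} A_\phi(R_{n, j})$ serving as proxies for $N_\phi/(1 - |w|^2)$ and $N_{\phi, 2}/(1 - |w|^2)^2$ respectively. In the discrete route, the naive pointwise bound $A_\phi(R_{n, j}) \lesssim m_\phi(R_{n, j})^2$ can fail at individual boxes, so the argument must proceed via averages over hyperbolic neighborhoods -- which is precisely what the Nevanlinna framework packages cleanly.
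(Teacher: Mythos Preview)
Your argument is correct, and in fact cleaner than the paper's own proof of this particular statement. The paper proves Theorem~\ref{Sp/2 Bergman} via the discrete Luecking criterion together with a non-trivial Carleson-measure comparison $A_\phi[W(\xi,h)] \lesssim \big(m_\phi[W(\xi,C h)]\big)^2$, imported from \cite{LLQR-Math-Ann}, and then has to unpack the enlarged windows $W'_{n,j}$ into unions of smaller windows and Hastings--Luecking boxes. Your route bypasses all of this: the pointwise bound $N_{\phi,2}\le 4\,N_\phi^{\,2}$ is just the $\ell^2\le\ell^1$ inequality on the preimage sums, and the conclusion then drops out of the continuous Luecking--Zhu criterion in one line. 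Amusingly, this is exactly the mechanism the paper uses a few pages later to prove the generalization (Theorem~\ref{generalization}), where the inequality $[N_{\phi,\beta_2}]^{1/\beta_2}\le[N_{\phi,\beta_1}]^{1/\beta_1}$ for $\beta_1<\beta_2$ plays the same role; your proof is the special case $\beta_1=1$, $\beta_2=2$.

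Your final paragraph is unnecessary hedging. The continuous Nevanlinna-counting-function characterization is the Luecking--Zhu theorem \cite{Luecking-Zhu}, cited and used verbatim in the paper; you may simply invoke it rather than worry about rederiving it from the discrete box formulation. There is no need to contemplate a discrete version of $A_\phi(R_{n,j})\lesssim m_\phi(R_{n,j})^2$ (which, as you correctly suspect, can fail boxwise and is what forces the paper's proof through the heavier machinery of \cite{LLQR-Math-Ann}).
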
 

In particular, if $C_\phi$ is Hilbert-Schmidt on $H^2$, then it is nuclear on ${\mathfrak B}^2$ (since on Hilbert spaces the nuclear operators coincide with those 
in the Schatten class $S_1$). 

\begin{proof}
We proved in \cite[formula (3.26), page 3963]{LLQR-TAMS}, as a consequence of the main result of \cite{LLQR-Math-Ann}, that for some positive constants 
$C, C'$, we have:
\begin{displaymath} 
A_\phi [W (\xi, h)] \leq C \, \big( m_\phi [W (\xi, C h) ] \big)^2 
\end{displaymath} 
for all $\xi \in \T$ and $0 < h < 1$ small enough. We may assume, enlarging $C'$ if needed, that $C' = 2^N$ for some positive integer $N$.
Hence if $W_{n, j} = W (\e^{2 j i \pi / 2^n}, 2^{- n})$ and 
$W'_{n, j} = W (\e^{2 j i \pi / 2^n}, 2^N 2^{- n})$, for $n > N$, we have:
\begin{displaymath} 
\sum_{j = 0}^{2^n - 1} \big( 4^n A_\phi ( W_{n, j}) \big)^{p / 4} \leq C \sum_{j = 0}^{2^n - 1} \big( 2^n m_\phi (W'_{n, j}) \big)^{p / 2} \, .
\end{displaymath} 
Now, each Carleson window $W'_{n, j}$ of size $2^N 2^{- n}$ is contained in the union of $2^N$ other Carleson windows 
$W_{n, j_1}, \ldots, W_{n, j_{2^N}}$ of size $2^{- n}$ and of less than $N 2^{N  - 1}$ Hastings-Luecking boxes $R_{\nu, j_l}$ with $\nu \leq n - 1$. Hence:
\begin{displaymath} 
\begin{split}
\sum_{j = 0}^{2^n - 1} \big( 4^n A_\phi ( W_{n, j}) \big)^{p / 4} 
\leq C_p 2^N \sum_{j = 0}^{2^n - 1} & \big( 2^n m_\phi (W_{n, j}) \big)^{p / 2} \\ 
& + C_p N 2^{N - 1} \sum_{j = 0}^{2^n - 1} \big( 2^n m_\phi (R_{n, j}) \big)^{p / 2} \, ,
\end{split}
\end{displaymath} 
(since, for $a, b \geq 0$, we have $(a + b)^r \leq a^r + b^r$ if $0 < r \leq 1$, and $(a + b)^r \leq 2^{r - 1} (a^r + b^r)$ if $r \geq 1$). 
\smallskip

It follows, thanks to \cite[Corollary~2]{Luecking} and \cite[Proposition~3.3]{LLQR-2008}, that $C_\phi \in S_p (H^2)$ implies 
$C_\phi \in S_{p / 2} ({\mathfrak B}^2)$. 
\end{proof}
\medskip

Theorem~\ref{Sp/2 Bergman} is sharp, as said by the following result.
\goodbreak

\begin{theorem} \label{Sp/2 pas mieux}
For every $p$ with $0 < p < \infty$, there exists a symbol $\phi$ for which the composition operator $C_\phi$ is in the Schatten class $S_p (H^2)$ on the Hardy 
space, but is not in any Schatten class $S_q ({\mathfrak B}^2)$ of the Bergman space with $q < p / 2$. 
\end{theorem}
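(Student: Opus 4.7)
The plan is to modify the Shapiro--Taylor-type construction in the proof of Theorem~\ref{theo pas Bergman-Schatten}, inserting a doubly-logarithmic correction parameter adjusted to~$p$. With $g \colon \D \to V_\eps$ the conformal map of that proof (so $g (1) = 0$ and $g$ is bi-Lipschitz near~$1$), set $a = 2 / p$ and pick $b > 2 / p$. Define
\[
f (z) = z \, \bigl( \log (1 / z) \bigr)^a \bigl( \log \log (1 / z) \bigr)^b
\]
with the principal branches, and $\phi = \exp (- f \circ g)$. For $\eps > 0$ small enough, $\Re f > 0$ on $V_\eps$, so $\phi$ is a symbol.

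Following the asymptotic expansion done in the proof of Theorem~\ref{theo pas Bergman-Schatten} (itself refining \cite[proof of Theorem~5.6]{LLQR-2008}), I would write, for $z = r \, \e^{i \alpha} \in V_\eps$ and $L = \log (1 / r)$, $M = \log L$:
\begin{align*}
\Re f (z) &= r \, L^a M^b \Bigl( \cos \alpha + \tfrac{a \, \alpha \sin \alpha}{L} + o (1 / L) \Bigr) \, , \\
\Im f (z) &= r \, L^a M^b \Bigl( \sin \alpha - \tfrac{a \, \alpha \cos \alpha}{L} + o (1 / L) \Bigr) \, .
\end{align*}
Combined with the bi-Lipschitz behavior of $g$ near $1$, these estimates, applied in the interior of $V_\eps$ ($|\alpha| < \pi / 2$) and on its boundary ($\alpha = \pm \pi / 2$, where $\Re f$ comes only from the $O(1 / L)$ correction), yield two-sided estimates
\[
m_\phi \bigl( W (1, h) \bigr) \approx \frac{h}{(\log (1 / h))^a (\log \log (1 / h))^b} \, , \quad A_\phi \bigl( W (1, h) \bigr) \approx \frac{h^2}{(\log (1 / h))^{2 a} (\log \log (1 / h))^{2 b}} \, ,
\]
with the masses of $m_\phi$ and $A_\phi$ concentrated in a small neighborhood of $\xi = 1$.

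For the positive part, I would apply Luecking's characterization (with $\gamma = - 1$) recalled in Section~2.3. Since $m_\phi$ is localized near $\xi = 1$, only $O (1)$ Hastings-Luecking boxes $R_{n, j}$ per scale contribute, and
\[
\sum_{n \geq 1} \sum_j \bigl( 2^n m_\phi (R_{n, j}) \bigr)^{p / 2} \approx \sum_{n \geq 1} \frac{1}{n \, (\log n)^{b p / 2}} < \infty \, ,
\]
since $a p / 2 = 1$ and $b p / 2 > 1$ by our choice of $b$. Hence $C_\phi \in S_p (H^2)$. For the negative part, Proposition~\ref{lemme Schattenitude} says that $C_\phi \in S_q (\mathfrak{B}^2)$ forces $\rho_{\phi, 2} (h) = {\rm o} \bigl( h^2 (\log (1 / h))^{- 2 / q} \bigr)$. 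But the lower estimate on $A_\phi (W (1, h))$ gives
\[
\frac{\rho_{\phi, 2} (h)}{h^2 (\log (1 / h))^{- 2 / q}} \gtrsim \frac{(\log (1 / h))^{2 / q - 4 / p}}{(\log \log (1 / h))^{2 b}} \, ,
\]
and for $q < p / 2$ the exponent $2 / q - 4 / p$ is strictly positive, so the ratio blows up, contradicting the necessary condition. Therefore $C_\phi \notin S_q (\mathfrak{B}^2)$ for any $q < p / 2$.

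The hard part will be the careful asymptotic analysis of $f$ uniformly on $V_\eps$: establishing two-sided matching bounds (not just upper estimates) for $\Re f$ and $\Im f$, tracking the doubly-logarithmic correction accurately enough to extract the correct behavior on $\partial V_\eps$ (where $\cos \alpha = 0$, so the leading real part arises only from the $a \, \alpha \sin \alpha / L$ term), and verifying that $m_\phi$ and $A_\phi$ are sufficiently concentrated near~$\xi = 1$ that the Luecking sums reduce, up to a bounded number of contributions per scale, to the geometric-type series computed above.
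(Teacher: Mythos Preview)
Your proposal is correct and follows essentially the same route as the paper: the same modified Shapiro--Taylor symbol $\phi = \exp(- f \circ g)$ with $f(z) = z\,(-\log z)^{2/p}\,[\log(-\log z)]^s$, the same lower estimate on $A_\phi\bigl(W(1,h)\bigr)$, and the same application of Proposition~\ref{lemme Schattenitude} for the negative part. The only differences are cosmetic: the paper takes $s > 1/p$ and simply cites \cite[Theorem~5.4]{LLQR-2008} for $C_\phi \in S_p(H^2)$, whereas you take $b > 2/p$ and propose to verify Luecking's criterion directly---your parameter choice is exactly what that direct verification requires.
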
 

Before giving the proof, let us mention that this theorem implies (in a strong way) a separation between Schatten classes by composition operators on Bergman 
spaces. Curiously, we did not find any reference for this result.

Indeed, for every $r > 0$, there exists a symbol $\phi$ for which the composition operator $C_\phi$ is in the Schatten class $S_{2r} (H^2)$ on the Hardy space, 
hence in the Schatten class $S_r({\mathfrak B}^2)$ on the Bergman space by Theorem~\ref{Sp/2 Bergman}, but which is not in any Schatten class 
$S_q ({\mathfrak B}^2)$ of the Bergman space for $q < r$.

\begin{proof}
Again, we use the variant of the Shapiro-Taylor map introduced in \cite[Theorem~5.4]{LLQR-2008} in order to have a composition operator in $S_p (H^2)$ but 
not in $S_q (H^2)$ for $q < p\,$. For $\eps > 0$ small enough, we set, for $z \in V_\eps$, where $V_\eps$ is defined in \eqref{V eps}:
\begin{equation} 
f (z) = z (- \log z)^{2 / p} [\log (- \log z)]^s \, ,
\end{equation} 
with $s > 1 / p\,$.

We set:
\begin{equation} \label{modified Shapiro-Taylor map}
\phi = \exp (- f \circ g) \, ,
\end{equation} 
where $g$ is as in the proof of Theorem~\ref{theo pas Bergman-Schatten}. Then $\phi \colon \D \to \D$ is analytic and we proved in 
\cite[Theorem~5.4]{LLQR-2008} that $C_\phi \in S_p (H^2)$. 

For $z = r \e ^{i \alpha} \in V_\eps$, we have (see \cite[Lemma~5.5]{LLQR-2008}:
\begin{align} 
0 < \Re f (z) & \lesssim r \bigg( \log \frac{1}{r} \bigg)^{2 / p} \bigg( \log \log \frac{1}{r} \bigg)^s \\
|\Im f (z) | & \lesssim r \bigg( \log \frac{1}{r} \bigg)^{2 / p} \bigg( \log \log \frac{1}{r} \bigg)^s \, .
\end{align} 

As in the proof of Theorem~\ref{theo pas Bergman-Schatten}, that implies that:
\begin{equation} \label{cond Schatten-Bergman} 
\rho_{\phi, 2} (h) \geq A_\phi \big( W (1, h) \big) \gtrsim \frac{h^2}{(\log 1/ h)^{4 / p} (\log \log 1 / h)^{2 s}} 
\end{equation} 
By Proposition~\ref{lemme Schattenitude}, if $C_\phi$ is in $S_q ({\mathfrak B}^2)$, we have:
\begin{displaymath} 
\rho_{\phi, 2} (h) = {\rm o} \bigg( \frac{h^2}{(\log 1 / h)^{2 / q}} \bigg) \, \raise 1 pt \hbox{,}
\end{displaymath} 
but, due to \eqref{cond Schatten-Bergman}, this is not possible for $q < p / 2$. Therefore $C_\phi \notin S_q$.
\end{proof}

\noindent {\bf Remark.} Actually Theorem~\ref{Sp/2 Bergman} has a more general form.
\goodbreak

\begin{theorem} \label{generalization}
Let ${\mathfrak B}_{\gamma_1}^2$ and ${\mathfrak B}_{\gamma_2}^2$ be two weighted Bergman space of parameter 
$\gamma_1$ and $\gamma_2$, with $\gamma_2 > \gamma_1 \geq - 1$. Then, for every $p > 0$ and any symbol $\phi$, we have:
\begin{itemize}
\setlength\itemsep {-0.1 em}
\item [$1)$] $C_\phi \in S_p ({\mathfrak B}_{\gamma_1}^2)$ implies $C_\phi \in S_{\tilde p} ({\mathfrak B}_{\gamma_2}^2)$, with 
$\tilde p = \frac{\gamma_1 + 2}{\gamma_2 + 2} \, p < p$; 

\item [$2)$] when $\phi$ is finitely valent, the converse holds. 

\end{itemize}
\end{theorem}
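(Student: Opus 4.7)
The plan is to imitate the proof of Theorem~\ref{Sp/2 Bergman}, replacing the key measure inequality $A_\phi[W(\xi,h)] \leq C\,(m_\phi[W(\xi,Ch)])^2$ (borrowed from \cite{LLQR-Math-Ann}) by the natural weighted analogue
\begin{equation}\label{weighted-comparison}
A_{\gamma_2,\phi}\bigl(W(\xi,h)\bigr) \leq C\,\bigl(A_{\gamma_1,\phi}(W(\xi,C h))\bigr)^{(\gamma_2+2)/(\gamma_1+2)}
\end{equation}
valid for all $\xi\in\T$ and $h$ small enough. For $\gamma_1=-1$, $\gamma_2=0$ this is precisely the inequality already invoked; in the general case one should recover \eqref{weighted-comparison} by a dyadic splitting of $W(\xi,h)$ into annuli $\{1-2^{-k}h \le 1-|z| < 2^{-k+1}h\}$, on which $(1-|z|^2)^{\gamma_2-\gamma_1}\approx (2^{-k}h)^{\gamma_2-\gamma_1}$, then applying the $\gamma_1=-1$ case fibrewise via the Hardy-to-Bergman transference of \cite{LLQR-Math-Ann}. (The exponent $(\gamma_2+2)/(\gamma_1+2)$ is forced by the homogeneity $A_\gamma(W(\xi,h))\approx h^{\gamma+2}$.)

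Once \eqref{weighted-comparison} is available, item $1)$ proceeds as follows. Choose $\tilde p = \frac{\gamma_1+2}{\gamma_2+2}p$, so that $(\tilde p/2)\cdot(\gamma_2+2)/(\gamma_1+2) = p/2$. By Luecking's criterion recalled in the introduction, $C_\phi\in S_p({\mathfrak B}_{\gamma_1}^2)$ is equivalent to the summability of $\sum_{n,j}\bigl(2^{n(\gamma_1+2)}A_{\gamma_1,\phi}(R_{n,j})\bigr)^{p/2}$, and an equivalent statement holds in terms of Carleson windows $W_{n,j}$ after standard comparisons between $R_{n,j}$ and $W_{n,j}$. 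Apply \eqref{weighted-comparison} to each $W_{n,j}$, raise to the power $\tilde p/2$, and absorb the dilation constant $C'=2^N$ via the same covering trick as in the proof of Theorem~\ref{Sp/2 Bergman}: each enlarged window $W'_{n,j}=W(\e^{2ji\pi/2^n},2^N 2^{-n})$ is covered by $2^N$ windows of size $2^{-n}$ plus finitely many Hastings--Luecking boxes of lower level, so the resulting sum splits into two pieces, both dominated by the $S_p({\mathfrak B}_{\gamma_1}^2)$-norm sum via the inequalities $(a+b)^r\leq a^r+b^r$ for $0<r\le 1$ and $(a+b)^r\leq 2^{r-1}(a^r+b^r)$ for $r\ge 1$. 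This yields $C_\phi\in S_{\tilde p}({\mathfrak B}_{\gamma_2}^2)$.

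For item $2)$, the converse, one needs the reverse of \eqref{weighted-comparison}, namely
\begin{equation}\label{reverse-comparison}
A_{\gamma_1,\phi}\bigl(W(\xi,h)\bigr) \leq C\,\bigl(A_{\gamma_2,\phi}(W(\xi,C h))\bigr)^{(\gamma_1+2)/(\gamma_2+2)},
\end{equation}
and this is precisely where the finite valence of $\phi$ enters. Indeed, if $\phi$ takes each value at most $N_0$ times, then by a change of variables $A_{\gamma,\phi}(W(\xi,h))$ is comparable, up to the factor $N_0$, to $\int_{\phi(\D)\cap W(\xi,h)}(1-|w|^2)^\gamma dA(w)$; since $(1-|w|^2)\approx h$ on $W(\xi,h)$ (up to the dyadic annular decomposition above), the two weighted measures are related by essentially the power $(\gamma_2+2)/(\gamma_1+2)$, yielding \eqref{reverse-comparison}. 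The rest of the argument is symmetric to part $1)$, with the roles of $\gamma_1$ and $\gamma_2$ swapped.

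The main obstacle is the derivation of \eqref{weighted-comparison} in full generality: the cited result \cite{LLQR-Math-Ann} is stated for the unweighted case, and one must either re-run its proof with a weight $(1-|z|^2)^{\gamma_2-\gamma_1}$ inside the relevant integral estimates, or, more elegantly, interpolate between the known cases $\gamma_1=-1$ and the trivial case $\gamma_1=\gamma_2$ using the geometry of Carleson windows. The finite-valent converse \eqref{reverse-comparison} is easier once the change-of-variables formula is set up, but requires care in the boundary behavior since the $\gamma_i$ may exceed $0$.
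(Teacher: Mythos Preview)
Your plan diverges from the paper's proof and carries a genuine gap.

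The paper bypasses the pull-back measures $A_{\gamma,\phi}$ entirely and uses instead the Luecking--Zhu criterion \cite{Luecking-Zhu}: $C_\phi\in S_p(\mathfrak{B}_\gamma^2)$ if and only if
\[
\int_\D \Bigl(N_{\phi,\gamma+2}(z)\,\bigl(\log\tfrac{1}{|z|}\bigr)^{-(\gamma+2)}\Bigr)^{p/2}\,\frac{dA(z)}{(1-|z|^2)^2}<\infty\,,
\]
where $N_{\phi,\beta}(z)=\sum_{\phi(w)=z}(\log\tfrac{1}{|w|})^\beta$. Part~1) then reduces to the \emph{pointwise} inequality $[N_{\phi,\beta_2}]^{1/\beta_2}\le[N_{\phi,\beta_1}]^{1/\beta_1}$ for $\beta_1<\beta_2$, which is nothing but the monotonicity of $\ell_\beta$-norms applied to the sequence $\bigl(\log\tfrac{1}{|w|}\bigr)_{\phi(w)=z}$; substituting and using $\tilde p\,(\gamma_2+2)=p\,(\gamma_1+2)$ finishes in two lines. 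Part~2) is the reverse pointwise bound $N_{\phi,\beta_1}\le s^{(\beta_2-\beta_1)/\beta_2}[N_{\phi,\beta_2}]^{\beta_1/\beta_2}$, immediate from H\"older's inequality together with the valence bound $n_\phi\le s$.

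Your route through the measure inequality \eqref{weighted-comparison} is not carried out: you yourself call its derivation ``the main obstacle'' and offer only a sketch. The dyadic-annulus idea is not convincing, since the comparison in \cite{LLQR-Math-Ann} does not localize to annuli in any evident way, and the suggested ``interpolation'' between $\gamma_1=-1$ and $\gamma_1=\gamma_2$ has no clear meaning for these families of measures. More seriously, your argument for the converse \eqref{reverse-comparison} is incorrect as written: the change of variables for $A_{\gamma,\phi}(W)=\int_{\phi^{-1}(W)}(1-|z|^2)^\gamma\,dA(z)$ keeps the weight on the \emph{preimage} side, and finite valence alone gives no comparison between $(1-|z|^2)^\gamma$ and $(1-|\phi(z)|^2)^\gamma$ --- for a lens map, for instance, these differ by a genuine power. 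The Nevanlinna-counting-function approach sidesteps all of this because the functions $N_{\phi,\beta}$ are compared pointwise via an elementary $\ell_p$ inequality, with no geometry of $\phi$ required beyond the valence count.
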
 

Note that this theorem gives another, though less explicit, proof of Theorem~\ref{theo pas Bergman-Schatten} and of Theorem~\ref{Sp/2 pas mieux}, as a 
direct consequence of \cite[Theorem~5.4 and Theorem~5.6]{LLQR-2008} since the symbols used in the proof of these theorems (and in that of the above 
Theorem~\ref{theo pas Bergman-Schatten} and Theorem~\ref{Sp/2 pas mieux}) are univalent. In fact, that the Shapiro-Taylor map, defined in 
\eqref{Shapiro-Taylor map}, is univalent is proved in \cite[Lemma~4.1~(a)]{Shapiro-Taylor}. As well,  the modified Shapiro-Taylor map, defined in 
\eqref{modified Shapiro-Taylor map}, is univalent. In fact, its derivative $f ' (z)$ is the sum of three terms, the dominant one being 
$(- \log z)^{2/p}\,\big( \log( - \log z) \big)^s$; it ensues that, for $\eps$ small enough and $z = r \, \e^{i t}$ with $0 < r < \eps$ and $|t| < \pi/2$, 
we have:
\begin{displaymath} 
\Re  f ' (z) \geq \frac{1}{2} (\log 1/r)^{2/p} (\log\log 1/r)^s > 0 \, ,
\end{displaymath} 
and it follows that $f$ is univalent in $V_\eps$. In both cases, the symbol $\phi = \exp (- f \circ g)$ is univalent. 

\begin{proof}
Recall that D.~Luecking and K.~H.~Zhu proved in \cite[Theorem~1 and Theorem~3]{Luecking-Zhu} that, for $\gamma \geq - 1$, we have 
$C_\phi \in S_p ({\mathfrak B}_{\gamma}^2)$ if and only if:
\begin{equation} 
\int_\D \bigg( N_{\phi, \gamma + 2} (z) \, \Big( \log \frac{1}{|z|} \Big)^{- (\gamma + 2)} \bigg)^{p / 2} \, \frac{dA (z)}{(1 - |z|^2)^2} < \infty \, , 
\end{equation} 
where $N_{\phi, \beta}$ ($\beta \geq 1$) is the \emph{weighted Nevanlinna counting function}, defined as:
\begin{equation} \label{weighted Nevanlinna} 
N_{\phi, \beta} (z) = \sum_{\phi (w) = z} \Big( \log \frac{1}{|w|} \Big)^\beta 
\end{equation} 
if $z \in \phi (\D) \setminus \{\phi (0) \}$, and $N_{\phi, \beta} (z) = 0$ otherwise. 

As said in the introduction, for $\gamma = -1$ we have ${\mathfrak B}_{- 1}^2 = H^2$. 

Now, for $1 \leq \beta_1 < \beta_2$, the $\ell_{\beta_2}$-norm is smaller than the $\ell_{\beta_1}$-norm; so we have:
\begin{equation}
\big[ N_{\phi, \beta_2} \big]^{1 / \beta_2} \leq \big[ N_{\phi, \beta_1} \big]^{1 / \beta_1} \, .
\end{equation} 
It follows that, for $- 1 \leq \gamma_1 < \gamma_2$:
\begin{align*}
\int_\D \bigg( N_{\phi, \gamma_2 + 2} (z) \, & \Big( \log \frac{1}{|z|} \Big)^{- (\gamma_2 + 2)} \bigg)^{\tilde p / 2} \, \frac{dA (z)}{(1 - |z|^2)^2} \\ 
& \leq \int_\D \bigg( \big[ N_{\phi, \gamma_1 + 2} (z) \big]^{\frac{\gamma_2 + 2}{\gamma_1 + 2}} 
\, \Big( \log \frac{1}{|z|} \Big)^{- (\gamma_2 + 2)} \bigg)^{\tilde p / 2} \, \frac{dA (z)}{(1 - |z|^2)^2} \\
& = \int_\D \bigg( N_{\phi, \gamma_1 + 2} (z) \, \Big( \log \frac{1}{|z|} \Big)^{- (\gamma_1 + 2)} \bigg)^{p / 2} \, \frac{dA (z)}{(1 - |z|^2)^2} 
\, \raise 1 pt \hbox{,}
\end{align*}
and that proves that $C_\phi \in S_{\tilde p}  ({\mathfrak B}_{\gamma_2}^2)$ if $C_\phi \in S_p ( {\mathfrak B}_{\gamma_1}^2)$.
\smallskip

Now, if $\phi$ is $s$-valent, we have:
\begin{displaymath} 
n_\phi (z) := \sum_{\phi (w) = z} 1 = {\rm card}\, \{w \in \D \tq \phi (w) = z\} \leq s \, .
\end{displaymath} 
Using H\"older's inequality, we get, for $1 \leq \beta_1 < \beta_2$:
\begin{displaymath} 
N_{\phi, \beta_1} (z) \leq [n_\phi (z)]^{(\beta_2 - \beta_1) / \beta_2} \big[ N_{\phi, \beta_2} (z) \big]^{\beta_1 / \beta_2} 
\leq s^{(\beta_2 - \beta_1) / \beta_2} \big[ N_{\phi, \beta_2} (z) \big]^{\beta_1 / \beta_2} \, .
\end{displaymath} 
Therefore:
\begin{align*}
\int_\D & \bigg( N_{\phi, \gamma_1 + 2} (z) \, \Big( \log \frac{1}{|z|} \Big)^{- (\gamma_1 + 2)} \bigg)^{p / 2} \, \frac{dA (z)}{(1 - |z|^2)^2} \\
& \leq s^{ p \, (\beta_2 - \beta_1) / 2 \beta_2} \int_\D \bigg( N_{\phi, \gamma_2 + 2} (z) 
\, \Big( \log \frac{1}{|z|} \Big)^{- (\gamma_2 + 2)} \bigg)^{\tilde p / 2} \, \frac{dA (z)}{(1 - |z|^2)^2} \, \raise 1 pt \hbox{,}
\end{align*}
and $C_\phi \in S_{\tilde p} (({\mathfrak B}_{\gamma_2}^2)$ implies that $C_\phi \in S_p ( {\mathfrak B}_{\gamma_1}^2)$.
\end{proof}
\goodbreak

%%%%%%%%%%%%%%%%%%%%%%%%%%%%%%%%%%%%%%%%%%%%%%%%%%%%%%%%%%%%%%%%%%%%%%%%%%%%%%
\section{Two examples} \label{examples}

\subsection {Preliminaries} 

Theorem \ref{theo compos op} can be successfully applied to $H^2 (\beta) = \mathcal{D}^{\, 2}_\alpha$ and  
$H^2 (\gamma) = \mathcal{D}^{\, 2}_{\alpha'}$ with $- 1 < \alpha < \alpha'$. But as we will see now with the example of the cusp map $\chi$, or of 
the lens maps, it does not provide as sharp estimates as wished. For example, with 
$H^2 (\beta) =\mathcal{D}^{\, 2} \subseteq H^2 (\gamma) = \mathcal{D}^{\, 2}_{\alpha}$ where $\alpha > 0$, it  gives, using 
\cite[Theorem~3.1]{LLQR-Arkiv}:
\begin{displaymath} 
a_{n} \big(C_\chi^{{\mathcal D}^{\, 2}_\alpha} \big) \lesssim \sqrt{ a_{n/2} (C_\chi^{{\mathcal D}^{\, 2}})} \lesssim \exp(- b \, \sqrt n) \, ,
\end{displaymath} 
while we will see, using the point of view of weighted composition operators, that actually:
\begin{displaymath} 
a_n (C_\chi^{{\mathcal D}^{\, 2}_\alpha}) \lesssim \exp \big(- b \, (n/\log n) \big) \, .
\end{displaymath} 

We now elaborate on this point of view.
\smallskip

Let $H$ be a Hilbert space of analytic functions on $\D$ whose set of multipliers $\mathcal{M} (H)$ is isometrically $H^\infty$. For example, this is the 
case for $H = {\mathfrak B}^2_\gamma$ for all $\gamma > - 1$, as recalled by Proposition~\ref{multipliers}.

Through a standard averaging argument, we easily have the following result (see \cite[Lemma~2.2]{LQR-JAT}).
\begin{proposition} \label{class} 
Let $H$ be a Hilbert space of analytic functions on $\D$ such that $\mathcal{M} (H) = H^\infty$. 
Let $z = (z_j)$ be a sequence of distinct points of $\D$ which is an interpolation sequence for $H^\infty$ with constant $I_z$. Then, the sequence 
$(K_{z_j})$ is a Riesz sequence for $H$ and moreover, for all $\lambda_1, \ldots, \lambda_n, \ldots \in \C$, we have:
\begin{displaymath} 
I_z^{- 2} \sum_j |\lambda_j|^2\,\| K_{z_j} \|^2 \leq \Big\| \sum_j \lambda_j K_{z_j} \Big\|^2 
\leq I_z^2 \sum_j |\lambda_j|^2 \, \| K_{z_j} \|^2 \, .
\end{displaymath} 
\end{proposition}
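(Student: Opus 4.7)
\medskip

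\noindent\textbf{Proof proposal.} The plan is to exploit the isometric identification $\mathcal{M}(H)=H^\infty$ in the form of the adjoint-eigenvector identity $M_f^\ast K_a = \overline{f(a)}\,K_a$, and then combine it with a Rademacher/Steinhaus averaging over the phases of an $H^\infty$-interpolating function. First, given any unimodular sequence $\varepsilon=(\varepsilon_j)$, the hypothesis that $(z_j)$ is an interpolation sequence for $H^\infty$ with constant $I_z$ gives a function $f_\varepsilon\in H^\infty$ with $f_\varepsilon(z_j)=\varepsilon_j$ and $\|f_\varepsilon\|_\infty\le I_z$. Because $\mathcal{M}(H)=H^\infty$ isometrically, the operator $M_{f_\varepsilon}^\ast\colon H\to H$ has norm at most $I_z$ and satisfies $M_{f_\varepsilon}^\ast K_{z_j}=\overline{\varepsilon_j}\,K_{z_j}$, hence for every finitely supported $(\lambda_j)$
\begin{displaymath}
\Big\|\sum_j \lambda_j\,\overline{\varepsilon_j}\,K_{z_j}\Big\| \;\le\; I_z\,\Big\|\sum_j \lambda_j K_{z_j}\Big\|.
\end{displaymath}

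Substituting $\lambda_j\mapsto\lambda_j\varepsilon_j$ in this inequality gives the symmetric statement
\begin{displaymath}
I_z^{-1}\Big\|\sum_j \lambda_j K_{z_j}\Big\| \;\le\; \Big\|\sum_j \lambda_j\varepsilon_j K_{z_j}\Big\| \;\le\; I_z\,\Big\|\sum_j \lambda_j K_{z_j}\Big\|,
\end{displaymath}
valid for every unimodular $\varepsilon$. The averaging step is now standard: taking $\varepsilon=(\varepsilon_j)$ to be independent Steinhaus (or Rademacher) variables, the cross terms in the inner-product expansion vanish in expectation, so that
\begin{displaymath}
\mathbb{E}\,\Big\|\sum_j \lambda_j\varepsilon_j K_{z_j}\Big\|^2 \;=\; \sum_j |\lambda_j|^2\,\|K_{z_j}\|^2.
\end{displaymath}
Squaring the two-sided inequality above and taking expectations yields exactly the claimed Riesz bounds $I_z^{-2}\sum_j|\lambda_j|^2\|K_{z_j}\|^2\le\|\sum_j\lambda_j K_{z_j}\|^2\le I_z^2\sum_j|\lambda_j|^2\|K_{z_j}\|^2$. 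A density argument (or a passage from finitely supported to $\ell^2$ sequences via partial sums) handles infinite sums.

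There is no real obstacle here: everything reduces to the identity $M_f^\ast K_a=\overline{f(a)}K_a$ combined with the fact that $M_{f_\varepsilon}$ has operator norm $\le I_z$, which is precisely what the isometric identification $\mathcal{M}(H)=H^\infty$ provides. The only mild point to verify is the measurability/integrability needed for the averaging, which is automatic for finite sums and extends by monotone convergence; note also that the two-sided inequality for finite sums implies in particular that $\sum_j \lambda_j K_{z_j}$ converges in $H$ whenever $\sum_j|\lambda_j|^2\|K_{z_j}\|^2<\infty$, which is what makes the statement meaningful for infinite sequences.
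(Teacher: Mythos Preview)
Your proof is correct and is exactly the ``standard averaging argument'' the paper alludes to (the paper does not spell out the proof, merely referring to \cite[Lemma~2.2]{LQR-JAT}): interpolate unimodular values, use $M_{f_\varepsilon}^\ast K_{z_j}=\overline{\varepsilon_j}K_{z_j}$ together with $\|M_{f_\varepsilon}\|\le I_z$, and average out the cross terms. There is nothing to add.
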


In \cite[Lemma 2.6]{GDHL}, we used Proposition~\ref{class} to prove an estimate from below (the proof was given only for $H = H^2$ and $w \in H^\infty$). 
We  slightly improve this estimate here,  with nearly the same proof, as follows. 

\begin{theorem} \label{sale} 
Let $H$ be a Hilbert space of analytic functions on $\D$ such that $\mathcal{M} (H) = H^\infty$. 
Let $\phi \colon \D \to \D$ be a symbol and $M_w \, C_\phi \colon H \to H$ an associated weighted composition operator with weight $w \in H$. 
We assume that $M_w \, C_\phi$ is bounded.  Let $u = (u_j)_{1 \leq j \leq n}$ be a sequence of length $n$ of points of $\D$ and $v_j = \phi (u_j)$, and 
assume that the points $v_j$ are distinct. Let $I_v$ be the interpolation constant of $v = (v_j)_{1 \leq j \leq n}$. Then, the approximation numbers of 
$M_w \, C_\phi$ satisfy:
\begin{displaymath} 
a_n (M_w \, C_\phi) \ge \inf_{1 \leq j \leq n} \bigg( |w (u_j)|\, \frac{\| K_{v_j} \|}{ \| K_{u_j} \|} \bigg) \, I_{v}^{- 2} \, .
\end{displaymath} 
\end{theorem}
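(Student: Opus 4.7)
The plan is to adapt the determinantal argument of \cite[Lemma~2.6]{GDHL} to the generality of Theorem~\ref{sale}. The strategy is to construct $n$ test functions on which $T = M_w C_\phi$ acts diagonally, then compare to the action of any rank-$(n{-}1)$ operator $R$ through a determinant identity.

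First I would invoke Proposition~\ref{class} to see that the normalized kernels $\hat K_{v_j}:=K_{v_j}/\|K_{v_j}\|$ form a Riesz sequence in $H$ with Riesz constants $I_v^{-1}$ and $I_v$. Taking $(g_j)_{j=1}^n$ to be its dual Riesz basis in $V:=\operatorname{span}\{K_{v_j}\}$, one has $\langle g_j,\hat K_{v_k}\rangle=\delta_{jk}$ --- equivalently $g_j(v_k)=\|K_{v_k}\|\,\delta_{jk}$ --- the same Riesz constants by Riesz-basis duality, and therefore $\|g_j\|\le I_v$ and the Bessel bound $\sum_j|\langle h,g_j\rangle|^2\le I_v^2\|h\|^2$ for every $h\in H$. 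The mapping identity $(Tf)(u_k)=w(u_k)f(v_k)$ applied to $f=g_j$ gives $(Tg_j)(u_k)=w(u_k)\|K_{v_k}\|\,\delta_{jk}$, so the $n\times n$ matrix $N_{jk}:=\langle Tg_j,\hat K_{u_k}\rangle=(Tg_j)(u_k)/\|K_{u_k}\|$ is diagonal with $N_{jj}=d_j:=w(u_j)\|K_{v_j}\|/\|K_{u_j}\|$. For any $R\in\mathcal{L}(H)$ with $\operatorname{rank}(R)<n$, form the parallel matrix $M_{jk}:=\langle Rg_j,\hat K_{u_k}\rangle$: as the bilinear-form evaluation of a rank-$(n{-}1)$ operator, $M$ has rank strictly less than $n$, so $\det M=0$. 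Setting $E:=N-M$, the relation $M=N-E$ combined with the invertibility of $N$ gives $\det(I-N^{-1}E)=0$, so $1$ is an eigenvalue of $N^{-1}E$, whence
\[
\|E\|_{\mathrm{op}}\;\ge\;\|N^{-1}\|_{\mathrm{op}}^{-1}\;=\;\min_{1\le j\le n}|d_j|\;=\;\inf_{1\le j\le n}\Bigl(|w(u_j)|\,\tfrac{\|K_{v_j}\|}{\|K_{u_j}\|}\Bigr).
\]

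The heart of the argument, and the main obstacle, is the matching upper bound $\|E\|_{\mathrm{op}}\le I_v^2\,\|T-R\|$. I would write $E_{jk}=\langle g_j,(T-R)^*\hat K_{u_k}\rangle$ and express the bilinear form $\sum_{j,k}\bar\mu_j E_{jk}\lambda_k$ as $\bigl\langle (T-R)^*\!\sum_k\bar\lambda_k\hat K_{u_k},\ \sum_j\bar\mu_j g_j\bigr\rangle$. One factor $I_v$ comes directly from the Bessel bound for $(g_j)$, giving $\|\sum_j\bar\mu_j g_j\|\le I_v\|\mu\|$. The second factor $I_v$ must control $\|\sum_k\bar\lambda_k\hat K_{u_k}\|$, and the plan is to obtain this by pushing the $u$-side combination forward through $T^*$ --- using $T^*\hat K_{u_k}=\overline{d_k}\hat K_{v_k}$ --- so as to transfer it to the $v$-side, where the Riesz upper bound for $(\hat K_{v_k})$ applies. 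Combining yields $\min_j|d_j|\le I_v^2\|T-R\|$; taking the infimum over rank-$(n{-}1)$ operators $R$ and using $a_n(T)=\inf_R\|T-R\|$ delivers the theorem. The delicate point is executing this final estimate cleanly, without any spurious $\sqrt{n}$-type loss from naive entrywise bounds on $E$.
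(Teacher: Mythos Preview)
Your determinantal scheme is genuinely different from the paper's argument, which is shorter: the paper works with Bernstein numbers of $T^\ast$ on the subspace $E=\operatorname{span}\{K_{u_j}\}$, computes $T^\ast\big(\sum_j\lambda_jK_{u_j}\big)=\sum_j\lambda_j\overline{w(u_j)}\,K_{v_j}$, applies the Riesz lower bound for $(K_{v_j})$ from Proposition~\ref{class}, pulls out $\delta=\inf_j|w(u_j)|\,\|K_{v_j}\|/\|K_{u_j}\|$, and then uses the Riesz lower bound for $(K_{u_j})$ together with the elementary inequality $I_u\le I_v$ (if $f(v_j)=a_j$ then $(f\circ\phi)(u_j)=a_j$ and $\|f\circ\phi\|_\infty\le\|f\|_\infty$) to conclude $a_n(T)=b_n(T^\ast)\ge \delta\,I_v^{-2}$. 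No determinants, no dual bases.

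Your approach can be made to work, but the stated plan for the second factor has a gap. You need the upper bound $\bigl\|\sum_k\bar\lambda_k\hat K_{u_k}\bigr\|\le I_v\|\lambda\|$, and you propose to obtain it by ``pushing through $T^\ast$'' via $T^\ast\hat K_{u_k}=\overline{d_k}\,\hat K_{v_k}$. But applying $T^\ast$ only gives an upper bound on $\|T^\ast h\|$ from the Riesz upper bound on the $v$-side; deducing an upper bound on $\|h\|$ itself would require a \emph{lower} bound on $T^\ast$ restricted to $\operatorname{span}\{K_{u_k}\}$, which is essentially what you are trying to prove. The identity $T^\ast\hat K_{u_k}=\overline{d_k}\,\hat K_{v_k}$ runs the wrong way for this purpose.

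The fix is exactly the observation the paper uses: since the $v_j$ are distinct so are the $u_j$, and the pull-back by $\phi$ shows $I_u\le I_v$; hence Proposition~\ref{class} applied to the sequence $u$ gives directly $\bigl\|\sum_k\bar\lambda_k\hat K_{u_k}\bigr\|\le I_u\|\lambda\|\le I_v\|\lambda\|$. With this in hand your bilinear estimate yields $\|E\|_{\mathrm{op}}\le I_v^2\|T-R\|$ and the proof closes. So the determinantal route is valid once patched, but it is more elaborate than the paper's three-line Bernstein-number computation, and both ultimately rest on the same two ingredients: Proposition~\ref{class} for each of the sequences $u$ and $v$, and the inequality $I_u\le I_v$.
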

%
  
%The assumption $w \in H^\infty$ can be replaced by the boundedness of $M_w$ on $C_\phi (H)$. This is the case when $C_\phi$ is assumed bounded 
%on $\mathcal{D}^{\, 2}_\alpha$ and $H = \mathfrak{B}_\alpha^2$, as will be clear in the proof of Theorem~\ref{diri}.

%
\begin{proof}
Recall that the approximation numbers $a_{n} (S)$ of an operator $S$ on a Hilbert space coincide with its Bernstein numbers $b_{n} (S)$. Let $E$ be the span of 
$K_{u_j}$, with $1 \leq j \leq n$. Set $\delta = \inf_{1 \leq j \leq n} \frac{\Vert K_{v_j} \Vert}{\Vert K_{u_j} \Vert} \, |w (u_j)|$. Take 
$f = \sum_{j = 1}^n \lambda_j \, K_{u_j}$ in the unit sphere of $E$; we hence have 
$\sum_{j = 1}^n |\lambda_j|^2 \Vert K_{u_j} \Vert^2 \geq I_{u}^{- 2}$.  
Setting $T = M_w \, C_\phi$, we see that $T^{\ast} (f) = \sum_{j = 1}^n \lambda_j \overline{w (u_j)} \, K_{v_j}$, so that
\begin{align*}
\Vert T^{\ast} (f) \Vert^2 
& \geq  I_{v}^{- 2} \sum_{j = 1}^n |\lambda_j|^2 |w (u_j)|^2 \Vert K_{v_j}\Vert^2 
\geq  I_{v}^{- 2} \delta^2 \sum_{j = 1}^n |\lambda_j|^2 \Vert K_{u_j} \Vert^2 \\
& \geq \delta^2 I_{v}^{- 2} I_{u}^{- 2}  \geq \delta^2 I_{v}^{- 4} \, .
\end{align*}
In the last inequality, we used the obvious inequality $I_u \leq I_v$ (if $f (v_j) = a_j$, $j = 1, \ldots, n$, then $(f \circ \phi) (u_j) = a_j$ for $j = 1, \ldots, n$, and 
$\|f \circ \phi \|_\infty \leq \| f\|_\infty$). 
\smallskip

Hence $a_{n} (T) = a_{n} (T^{\ast}) = b_{n} (T^{\ast}) \geq \delta I_{v}^{- 2}$.
\end{proof}
\medskip
\goodbreak

In order to apply Theorem~\ref{sale} for weighted Dirichlet spaces, we will use the following process. 

First, it suffices to prove the lower estimate with $({\mathcal D}^{\, 2}_\alpha)^\ast$ instead of ${\mathcal D}^{\, 2}_\alpha$, where: 
\begin{equation} 
({\mathcal D}^{\, 2}_\alpha)^\ast = \{ f \in {\mathcal D}^{\, 2}_\alpha \tq f (0) = 0 \} 
\end{equation} 
is the hyperplane of ${\mathcal D}^{\, 2}_\alpha$ of functions vanishing at $0$. 
\smallskip

The derivation $\Delta$ is by definition a unitary operator from $(\mathcal{D}^{\, 2}_\alpha)^\ast$ onto  $\mathfrak{B}^2_\alpha$. For any symbol 
$\phi$ vanishing at $0$,  we have the following diagram, where $w = \phi '$:
\begin{equation} 
(\mathcal{D}^{\, 2}_\alpha)^\ast \xrightarrow{\phantom{M} \Delta \phantom{M}} \mathfrak{B}^2_\alpha 
\xrightarrow{\, M_w C_\phi\, } \mathfrak{B}^2_\alpha \xrightarrow{\ \Delta^{ - 1}\,} (\mathcal{D}^{\, 2}_\alpha)^\ast \, ,
\end{equation} 
with obviously (since $\phi ' \, (f ' \circ \phi) = (f \circ \phi) '$):
\begin{displaymath} 
C_\phi^{(\mathcal{D}^{\, 2}_\alpha)^\ast} = \Delta^{- 1} (M_w C_\phi) \, \Delta \, ,
\end{displaymath} 
which shows that $C_{\varphi}^{(\mathcal{D}^{\, 2}_\alpha)^\ast}$, acting on the delicate space $(\mathcal{D}^{\, 2}_\alpha)^\ast$ is unitarily equivalent 
to the weighted composition operator $M_w C_\phi$ acting on the more robust space $ \mathfrak{B}^2_\alpha$ (in that 
$\mathcal{M} (\mathfrak{B}^2_\alpha) = H^\infty$). 
Moreover,  $C_\phi^{(\mathcal{D}^{\, 2}_\alpha)^\ast}$ and $M_w C_\phi \colon \mathfrak{B}^2_\alpha \to \mathfrak{B}^2_\alpha$ have the same 
approximation numbers. 
\goodbreak

%%%%%%%%%%%%%%%%%%%%%%
\subsection {The cusp map on weighted Dirichlet spaces} \label{cusp}

First, we recall the definition of the cusp map $\chi$. We begin by defining:
\begin{equation} \label{chi-zero}
\chi_0 (z) = \frac{\displaystyle \Big( \frac{z - i}{i z - 1} \Big)^{1/2} - i} {\displaystyle - i \, \Big( \frac{z - i}{i z - 1} \Big)^{1/2} + 1} \, \cdot 
\end{equation}
That defines a conformal mapping from $\D$ onto the right half-disk 
\begin{displaymath} 
D = \{z \in \D \tq \Re z > 0\} 
\end{displaymath} 
such that $\chi_0 (1) = 0$, $\chi_0 (- 1) = 1$, 
$\chi_0 (i) = - i$, $\chi_0 (- i) = i$, and $\chi_0 (0) = \sqrt{2} - 1$. Then we set: 
\begin{equation} \label{chi-1-2-3} 
\chi_1 (z) = \log \chi_0 (z), \quad \chi_2 (z) = - \frac{2}{\pi}\, \chi_1 (z) + 1, \quad \chi_3 (z) = \frac{a}{\chi_2 (z)}  \, \raise 1pt \hbox{,} 
\end{equation}
and finally:
\begin{equation} \label{chi} 
\chi (z) = 1 - \chi_3 (z) \, ,
\end{equation}
where:
\begin{equation} \label{definition de a}
a = 1 - \frac{2}{\pi} \log (\sqrt{2} - 1) = 1.56 \ldots \, \in (1, 2) 
\end{equation} 
is chosen in order that $\chi (0) = 0$. The image $\Omega$ of the (univalent) cusp map $\chi$ is formed by the intersection of the  inside of the disk 
$D \big(1 - \frac{a}{2} \raise 1pt \hbox{,} \frac{a}{2} \big)$ and the outside of the two closed disks  
$\overline{D \big(1 + \frac{i a}{2} \raise 1pt \hbox{,} \frac{a}{2} \big)}$ and $\overline{D \big(1  - \frac{ i a}{2} \raise 1pt \hbox{,} \frac{a}{2} \big)}$. 
\smallskip

Since $\chi$ is injective, it follows from Zorboska's characterization in \cite{Zorb} (see also \cite[Section~6.12]{Shapiro}) that the composition operator 
$C_\chi$ is bounded on ${\mathcal D}^{\, 2}_\alpha$ for $\alpha \geq 0$. In particular $\chi \in {\mathcal D}^{\, 2}_\alpha$. 
\smallskip

\begin{theorem} \label{diri} 
Let $\chi$ be the cusp map acting  on the Dirichlet space $\mathcal{D}^{\, 2}_\alpha$. Then, for some positive constants $b'_\alpha > b_\alpha > 0$, 
depending only on $\alpha$, we have, for all $n \geq 1$:
\begin{equation} \label{fri} 
\qquad  \qquad  \e^{- b'_\alpha \, n/ \log n}\lesssim a_n (C_\chi) \lesssim \e^{- b_\alpha \, n/ \log n} \qquad \text{for } \alpha > 0 \, ,
\end{equation}
and
\begin{equation} \label{approx numb Dirichlet} 
\qquad \qquad\qquad  \e^{- b'_0 \sqrt n} \lesssim a_n (C_\chi) \lesssim \e^{- b_0 \sqrt n} \qquad\quad \text{for } \alpha = 0 \, .
\end{equation}
\end{theorem}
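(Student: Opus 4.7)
The case $\alpha = 0$ is precisely \eqref{cusp-Dirichlet}, proved in \cite{LLQR-Arkiv}, so I focus on $\alpha > 0$. Because $\chi(0) = 0$, the operator $C_\chi$ leaves invariant the codimension-one subspace $(\mathcal{D}^{\, 2}_\alpha)^\ast$ of functions vanishing at the origin, and on that subspace the derivation $\Delta f = f'$ is a unitary onto $\mathfrak{B}^2_\alpha$ that intertwines $C_\chi$ with the weighted composition operator $T := M_{\chi'} C_\chi$, as recalled just before Section~\ref{cusp}. Thus the approximation numbers of $C_\chi^{\mathcal{D}^{\, 2}_\alpha}$ and of $T$ on $\mathfrak{B}^2_\alpha$ agree up to an index shift, and I work with $T$ on the Bergman side, where $\mathcal{M}(\mathfrak{B}^2_\alpha) = H^\infty$ by Proposition~\ref{multipliers} and the reproducing kernels satisfy $\| K_a \|_{\mathfrak{B}^2_\alpha}^2 \approx (1-|a|^2)^{-(\alpha+2)}$.

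\emph{Upper bound.} Since $\chi(0) = 0$, we have $T(z^k) = \chi'(z)\chi(z)^k$ vanishing at $0$ at least to order $k$, so the matrix of $T$ in the monomial basis is lower-triangular. The weights of $H^2 = \mathfrak{B}^2_{-1}$ are $\beta_k \equiv 1$ and those of $\mathfrak{B}^2_\alpha$ are $\gamma_k \approx (k+1)^{-(\alpha+1)}$, so the ratio $\beta_k / \gamma_k \approx (k+1)^{\alpha+1}$ is increasing. Theorem~\ref{theo compos op-bis} then gives that $\big( s_n(T^{\mathfrak{B}^2_\alpha}) \big)$ is log-subordinate to $\big( s_n(T^{H^2}) \big)$; this last sequence coincides with $\big( a_n(C_\chi^{H^2}) \big)$ by the same $\Delta$-identification applied to $H^2 = \mathcal{D}^{\, 2}_1$. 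Combining with the known upper bound \eqref{n sur log n} and inequality \eqref{eq 2 log-sub} of Corollary~\ref{coro log-subordin}, I get
\[
a_{2n}(C_\chi^{\mathcal{D}^{\, 2}_\alpha}) \lesssim \sqrt{a_n(C_\chi^{H^2})} \lesssim \e^{-c_1' n/(2 \log n)},
\]
which yields the desired upper bound with some constant $b_\alpha > 0$.

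\emph{Lower bound.} I apply Theorem~\ref{sale} to $T$ on $\mathfrak{B}^2_\alpha$. A direct computation based on the explicit formulas \eqref{chi-zero}--\eqref{chi} produces the real-axis asymptotics $1 - \chi(u) \approx 1/\log\big(1/(1-u)\big)$ and $|\chi'(u)| \approx (1-\chi(u))^2 / (1-u)$ as $u \to 1^-$. Together with the kernel formula above, these yield
\[
|\chi'(u)| \, \frac{\| K_{\chi(u)} \|}{\| K_u \|} \approx (1 - \chi(u))^{(2-\alpha)/2} (1-u)^{\alpha/2}.
\]
I choose $u_j = 1 - \rho_n^{\,j}$ for $j = 1, \ldots, n$, with $\rho_n \in (0,1)$ tuned so that $\log(1/\rho_n) \approx 1/\log n$; then $v_j := \chi(u_j)$ satisfies $1 - v_j \approx (\log n)/j$, and a direct Blaschke-product computation on the real axis shows that the $H^\infty$-interpolation constant $I_v$ of the finite sequence $(v_j)_{j \le n}$ grows only polynomially in $n$. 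Plugging in, the minimum over $j \le n$ of $|\chi'(u_j)| \, \| K_{v_j} \| / \| K_{u_j} \|$ is of order $\e^{-c n/\log n}$ up to a polynomial factor in $n$ (the minimum is attained at $j = n$ when $\alpha \le 2$, and at some interior index $j^\ast \approx \log n$ when $\alpha > 2$, where it is even better). After absorbing polynomial factors into the exponent, Theorem~\ref{sale} gives $a_n(C_\chi^{\mathcal{D}^{\, 2}_\alpha}) \gtrsim \e^{-b'_\alpha n/\log n}$.

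\emph{Main obstacle.} The upper bound is essentially immediate from the comparison theorem of Section~\ref{subsection operators}, so the heart of the argument lies in the lower bound. The delicate point is the joint calibration of three competing ingredients: the kernel ratio $\| K_{v_j} \| / \| K_{u_j} \|$ (which favours $\rho_n$ close to $1$), the weight $|\chi'(u_j)|$ (whose explicit asymptotics at the cusp tip must be read off from \eqref{chi-zero}--\eqref{chi}), and the $H^\infty$-interpolation constant $I_v$ of the resulting sequence on the real axis (which forces $\rho_n$ to stay bounded away from $1$). The choice $\log(1/\rho_n) \approx 1/\log n$ is the natural balancing point, and the work is to verify that under this choice $I_v$ grows at most polynomially while the minimum factor has the announced order $\e^{-c n/\log n}$.
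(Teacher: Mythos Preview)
Your upper-bound argument has a genuine gap. You apply Theorem~\ref{theo compos op-bis} to $T = M_{\chi'} C_\chi$ with $H^2(\beta) = H^2 = \mathfrak{B}^2_{-1}$, but this requires $T$ to be bounded on $H^2$, and it is not: $T(\mathds{1}) = \chi'$, and the asymptotic $\chi'(z) \approx \big((1-z)\log^2\frac{1}{|1-z|}\big)^{-1}$ near $z=1$ (Lemma~\ref{ajout}) forces $\chi' \notin H^2$. The related identification you invoke is also off by one index: the derivation $\Delta$ sends $(\mathcal{D}^{\,2}_1)^\ast = (H^2)^\ast$ unitarily onto $\mathfrak{B}^2_1$, not onto $H^2$, so $a_n(C_\chi^{H^2})$ equals $s_n(T^{\mathfrak{B}^2_1})$, not $s_n(T^{H^2})$. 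If you repair this by comparing $\mathfrak{B}^2_1$ with $\mathfrak{B}^2_\alpha$ (equivalently, applying Theorem~\ref{theo compos op} directly to $C_\chi$ on $\mathcal{D}^{\,2}_1 = H^2$ versus $\mathcal{D}^{\,2}_\alpha$), the monotonicity of $\beta_k/\gamma_k \approx (k+1)^{\alpha-1}$ holds only for $\alpha \ge 1$. For $0 < \alpha < 1$ the subordination goes the wrong way, and the best the comparison method yields (starting from $\mathcal{D}^{\,2}_0$) is $\e^{-c\sqrt{n}}$, as the paper itself remarks at the opening of Section~\ref{examples}. This is why the paper builds the direct machinery of Lemmas~\ref{1}--\ref{3} and Proposition~\ref{4} to obtain the sharp $\e^{-c\,n/\log n}$ upper bound; the comparison route you propose is used in the paper only as a supplement for $\alpha > 1$.

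Your lower-bound strategy (Theorem~\ref{sale} applied to $M_{\chi'} C_\chi$ on $\mathfrak{B}^2_\alpha$) is exactly the paper's, but your choice of test points does not work. With $u_j = 1 - \rho_n^{\,j}$ and $\log(1/\rho_n) \approx 1/\log n$, you get $1 - v_j \approx (\log n)/j$, so $v_1$ lies outside $\D$ for large $n$, and more importantly the ratios $(1-v_{j+1})/(1-v_j) = j/(j+1) \to 1$, so the $v_j$ are not pseudohyperbolically separated and the interpolation constant $I_v$ is \emph{not} polynomially bounded. The paper instead makes the $v_j$ geometric, $1 - v_j = \e^{-j\varepsilon}$, which gives the clean estimate $I_v \lesssim \e^{c_0/\varepsilon}$; the $u_j$ are then doubly exponential, and the optimal choice $\varepsilon = \frac{1}{n}\log(n/\log n)$ balances the kernel/weight term against $I_v^{-2}$ to produce $\e^{-b'_\alpha n/\log n}$.
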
 
\smallskip

Actually, the proof shows that, for $\alpha > 0$, the constant $b_\alpha$ can be chosen as $c\min(1,\alpha)$, and $b'_\alpha$ can be chosen as 
$c' \max(1, \alpha)$, where $c$, $c'$ are absolute positive constants.
\smallskip

Note that the case $\alpha < 0$ is not relevant since then composition operators $C_\phi$ that are compact on ${\mathcal D}^{\, 2}_\alpha$ must satisfy 
$\| \phi \|_\infty < 1$ (\cite{Shapiro-PAMS}).
\smallskip

The estimates \eqref{approx numb Dirichlet} was first proved in \cite[Theorem~3.1]{LLQR-Arkiv}, with ad hoc methods. We give here a more transparent 
proof of the lower bound, based on weighted composition operators acting on ${\mathfrak B}^2_\alpha$, and that works for all $\alpha \geq 0$. Here the 
weight is $\chi '$. Since $\chi \in {\mathcal D}^{\, 2}_\alpha$, we have $\chi ' \in {\mathfrak B}^2_\alpha$. 
\smallskip

We have the following estimates (the first one was given in \cite[Lemma~4.2]{LQR-Fennicae}).
\begin{lemma} \label{ajout} 
When $r \to 1^{-}$, it holds:
\begin{equation} \label{jeudi} 
1 - \chi (r) \approx \frac{1}{\log [1 / (1 - r)]} \qquad \text{and} \qquad \chi ' (r) \approx \frac{1}{(1 - r)\log^{2} [1 / (1 - r)]} \, \cdot
\end{equation}
\end{lemma}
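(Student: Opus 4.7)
The plan is to perform explicit asymptotic expansions of the intermediate maps $\chi_0,\chi_1,\chi_2,\chi_3$ as $r\to 1^-$ and assemble them. The crucial preliminary observation is that, although the cusp map $\chi$ has a boundary singularity at $1\in\T$, the auxiliary map $\chi_0$ is in fact analytic in a full neighborhood of $1$: the Möbius transformation $\zeta(z):=(z-i)/(iz-1)$ sends $1$ to $-1$ with $\zeta'(1)=-i$, so the image under $\zeta$ of a small disk around $1$ stays in the upper half-plane away from the negative real axis, allowing a single-valued analytic branch of the square root. Equivalently, $\chi_0$ is a conformal map that sends the analytic arc $\T$ near $1$ to the analytic arc $i\R$ near $0$, and so extends analytically past $z=1$.

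Expanding $\zeta(r)=-1+i(1-r)+O((1-r)^2)$ and taking the chosen analytic square root yields $\zeta(r)^{1/2}=i+(1-r)/2+O((1-r)^2)$. Substituting into the definition of $\chi_0$ gives
\begin{displaymath}
\chi_0(r)=\frac{(1-r)/2+O((1-r)^2)}{2+O(1-r)}=\frac{1-r}{4}+O\bigl((1-r)^2\bigr)\,.
\end{displaymath}
Taking logarithms, $\chi_1(r)=\log(1-r)+O(1)$, hence $\chi_2(r)=-(2/\pi)\chi_1(r)+1=(2/\pi)\log[1/(1-r)]+O(1)$. Since $1-\chi(r)=\chi_3(r)=a/\chi_2(r)$, the first claimed equivalence $1-\chi(r)\approx 1/\log[1/(1-r)]$ follows at once.

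For the derivative estimate, the chain rule gives $\chi'(r)=-\chi_3'(r)=a\,\chi_2'(r)/\chi_2(r)^2$ with $\chi_2'(r)=-(2/\pi)\chi_0'(r)/\chi_0(r)$. The analyticity of $\chi_0$ at $1$ now pays off: differentiating the expansion above is legitimate and produces $\chi_0'(r)=-1/4+O(1-r)$, whence
\begin{displaymath}
\frac{\chi_0'(r)}{\chi_0(r)}=-\frac{1}{1-r}+O(1)\qquad\text{as }r\to 1^-\,.
\end{displaymath}
Consequently $\chi_2'(r)\sim 2/[\pi(1-r)]$, and combining with $\chi_2(r)^2\approx \log^2[1/(1-r)]$ we obtain the announced $\chi'(r)\approx 1/\bigl[(1-r)\log^2[1/(1-r)]\bigr]$.

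The only delicate point is the justification of differentiating the asymptotic expansion of $\chi_0$, and this rests entirely on the fact that $\chi_0$ is analytic (not just $C^1$) at the boundary point $1$. Once that is granted, the rest is routine bookkeeping of leading terms in a four-step chain of elementary operations (Möbius, square root, logarithm, reciprocal). I do not anticipate any other obstacle.
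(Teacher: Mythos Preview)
Your proposal is correct and follows essentially the same route as the paper: expand $\chi_0$ near $r=1$ to get $\chi_0(r)\sim(1-r)/4$, pass through $\chi_1,\chi_2,\chi_3$ by elementary calculus, and assemble. The only cosmetic difference is that the paper obtains $\chi_0'(r)$ from the closed-form identity $\chi_0(r)=\tan\bigl[\tfrac{1}{2}\arctan\tfrac{1-r}{1+r}\bigr]$ (giving $\chi_0'(r)=-\tfrac{1+\chi_0(r)^2}{2(1+r^2)}$ exactly), whereas you invoke analyticity of $\chi_0$ at $z=1$ to differentiate the Taylor expansion; both are valid and equally short.
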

\begin{proof} 
For $r \in (0,1)$, we have (\cite[Lemma~4.2]{LQR-Fennicae}):
\begin{displaymath} 
\chi_0 (r) = \tan \bigg[ \frac{1}{2} \, \arctan \bigg(\frac{1 - r}{1 + r}\bigg) \bigg] = \tan \Big( \frac{\pi}{8} - \frac{1}{2}\, \arctan r \Big) \, ;
\end{displaymath} 
hence $\chi_0 (r) \approx 1 - r$ and:
\begin{displaymath} 
\chi_0 ' (r) = - \frac{1 +[\chi_0 (r)]^2}{2(1 + r^2)} \, \cdot
\end{displaymath} 
Using the definitions \eqref{chi-1-2-3} and \eqref{chi}, we get: 
\begin{displaymath} 
\chi_2 ' = - \frac{2}{\pi} \, \frac{\chi_0 '}{\chi_0} \quad \text{and} \quad \chi_3 ' = - \frac{a \, \chi_2 '}{\chi_2^2} \, \cdot 
\end{displaymath} 
So we have, when $r \to 1^{-}$:
\begin{displaymath} 
\chi_2 (r) \approx \log [1 / (1 - r)]  
\end{displaymath} 
and:
\begin{displaymath} 
\chi '_2 (r) = \bigg(\frac{1 +[\chi_0 (r)]^2}{\pi(1 + r^2)} \bigg) \, \frac{1}{\chi_0 (r)} \approx \frac{1}{\chi_0 (r)} 
\approx \frac{1}{1 - r} \, \cdot 
\end{displaymath} 
The result follows.
\end{proof}
\begin{proof} [Proof of Theorem~\ref{diri}] \hfill \par
\smallskip

\noindent {\sl Proof of the lower bound.}

We  choose $0 < u_j < 1$ so as to get via \eqref{jeudi}, with $v_j = \chi (u_j)$ and $\varepsilon > 0$ to be adjusted later:
\begin{equation} \label{venerdi} 
1 - v_j = \e^{- j \varepsilon} \, ;
\end{equation}
hence:
\begin{equation} 
\log \Big(\frac{1}{1 - u_j} \Big) \approx \e^{j\varepsilon} \, .
\end{equation} 

The interpolation constant $I_v$ of the sequence $v = (v_j)_j$ satisfies $I_v \lesssim 1 / \delta_v^2$, where $\delta_v$ is its Carleson constant (see 
\cite[Chapter~VII, Theorem~1.1]{Garnett}). Since $\frac{1 - v_{j + 1}}{1 - v_j} = \e^{- \eps}$, we have (\cite[Lemma~6.4]{LQR-JAT}), for some 
positive constants $c_1$, $c_2$:
\begin{displaymath} 
\delta_v \geq \exp \Big( - \frac{c_1}{1 - \e^{- \eps}} \Big) \geq \e^{ - c_2 / \eps} \, ;
\end{displaymath} 
hence, with $c_0 = 2 \, c_2$:
\begin{displaymath} 
I_v \lesssim \e^{\, c_0 / \eps} 
\end{displaymath} 
(where the implicit constant does not depend on $\eps$).
\smallskip\goodbreak

The reproducing kernel of the Bergman space ${\mathfrak B}^2_\alpha$ satisfies:
\begin{displaymath} 
\| K_a \| = \frac{1}{(1 - |a|^2)^{1 + \alpha/2}} \approx \frac{1}{(1 - |a|)^{1 + \alpha/2}} \, \cdot
\end{displaymath} 
Using \eqref{jeudi} and \eqref{venerdi}, we get, for $1 \leq j \leq n$:
\begin{align*}
|\chi '(u_j)| \, \frac{\| K_{v_j} \|}{\| K_{u_j} \|}
& \gtrsim \frac{1}{(1 - u_j) \, \log^{2} [1 / (1 - u_{j})]} \, \frac{(1 - u_j)^{1 + \alpha/2}}{(1 - v_j)^{1 + \alpha/2}} \\ 
& = \frac{1}{\log^{2} [1 / (1 - u_{j})]} \, \frac{(1 - u_j)^{\alpha/2}}{\ (1 - v_j)^{1 + \alpha/2}} \\
& \approx \frac{1}{\e^{ 2 j \eps}} \, \frac{\exp ( - \frac{\alpha}{2} \, \e^{j \eps})}{\e^{- j \eps (1 + \alpha / 2)}} 
= \exp \bigg( - \bigg[ \frac{\alpha}{2} \,  \e^{j \eps} + j \eps \Big( 1 - \frac{\alpha}{2} \Big) \bigg] \bigg) \\
& \gtrsim \exp \bigg( - \bigg[ \frac{\alpha}{2} \,  \e^{n \eps} + n \eps \Big( 1 - \frac{\alpha}{2} \Big) \bigg]\bigg) \, ,
\end{align*}
since the function $t \mapsto \frac{\alpha}{2} \, \e^t + t \, \big( 1 - \frac{\alpha}{2} \big)$ is increasing; in fact, its derivative is positive.
\smallskip

Theorem~\ref{sale} with $w = \chi '$ gives:
\begin{equation} \label{sharp estimate}
a_n^{{\mathcal B}_\alpha^{\, 2}}(M_{\chi '} C_\chi) \gtrsim 
\exp \bigg( - \bigg[ \frac{\alpha}{2} \,  \e^{n \eps} + n \eps \Big( 1 - \frac{\alpha}{2}\Big)  + \frac{2 \, c_0 }{\eps} \bigg] \bigg) \, .
\end{equation} 
\smallskip 

{\sl Case $\alpha = 0$}. In this case, we have:
\begin{displaymath}
a_n^{{\mathfrak B}_0^2}(M_{\chi '} C_\chi) \gtrsim \exp \bigg( - \bigg[n \eps + \frac{2 \, c_0 }{\eps} \bigg] \bigg) \, .
\end{displaymath}
Taking $\eps = 1 / \sqrt n$, we get:
\begin{displaymath}
a^{{\mathcal D}_0^{\, 2}}_n (C_\chi) = a^{{\mathfrak B}_0^2}_n (M_{\chi '} C_\chi) \gtrsim \e^{ - c \sqrt{n}} 
\end{displaymath}
for some positive absolute constant $c$.
\smallskip

{\sl Case $\alpha > 0$}. We take $\eps = \frac{1}{n} \, \log \big( \frac{n}{\log n} \big)$ and we get:
\begin{displaymath} 
a^{{\mathfrak B}_\alpha^2}_n (M_{\chi '} C_\chi)\gtrsim\exp \bigg( - \bigg[ \frac{\alpha}{2} \,  \frac{n}{\log(n)} +\log\Big(\frac{n}{\log(n)}\Big)  
+ 2 \, c_0 \frac{n}{\log(n)-\log(\log(n))}\bigg]\bigg) \, ,
\end{displaymath}
Since $\log\big(\frac{n}{\log(n)}\big)  + 2 \, c_0 \frac{n}{\log(n)-\log(\log(n))} = {\rm O}\, \big( \frac{n}{\log(n)}\big)$, we get:
\begin{displaymath} 
a^{{\mathcal D}_\alpha^{\, 2}}_n (C_\chi)\gtrsim \exp (- b'_\alpha  \, n / \log n) \, ,
\end{displaymath}
with $b'_\alpha = c' \max (\alpha, 1)$, where $c'$ is some positive absolute constant.
\medskip\goodbreak

%%%%%%%%%%%%%%%%%%%%%%%%%%%
\noindent {\sl Proof of the upper bound.} 

For $\alpha = 0$, the upper bound is proved in \cite[Theorem~3.1]{LLQR-Arkiv}; for $\alpha > 0$, we will follow that proof, with the same notation, but 
with the weighted Nevanlinna counting function $N_{\chi, \alpha}$ instead of the counting function $n_\chi$. Recall that the weighted Nevanlinna counting 
function of the analytic function $\phi \colon \D \to \D$ is:
\begin{displaymath} 
N_{\phi, \alpha} (w) = \sum_{\phi (z) = w} (1 - |z|^2)^\alpha \, .
\end{displaymath} 
Note that this definition is slightly different, though equivalent, from that given in \eqref{weighted Nevanlinna}, but it is more convenient here.

Since the cusp map $\chi$ is univalent, we have:
\begin{equation} \label{thtis} 
\qquad \!\! N_{\chi, \alpha} (w) = 
\left\{
\begin{array} {ll}
(1 - |\chi^{- 1} (w)|^2)^\alpha & \qquad \text{for } w \in \chi (\D) \, , \\ 
\ 0 & \qquad \text{otherwise.} 
\end{array}
\right.
\end{equation}

The Schwarz lemma gives $N_{\chi, \alpha} (w) \leq (1 - |w|^2)^\alpha$, but the following lemma gives the better estimate:
\begin{equation} \label{Nevanlinna cusp}
\qquad \qquad \qquad N_{\chi, \alpha} (w) \lesssim \e^{- c_0 \alpha / (1 - |w|)} \, , \quad \text{for } w \in \chi (\D)
\end{equation} 
\begin{lemma} \label{hope}  
We have:
\begin{displaymath} 
\qquad \quad 1 -| \chi (z) |\gtrsim \frac{1}{\log [1/|1 - z|]} \quad  \text{for  all } z \in \D \, ,
\end{displaymath} 
so that, for some positive constant $c_0$:
\begin{displaymath} 
\qquad 1 - |\chi^{- 1} (w)| \leq |1 - \chi^{- 1} (w)| \lesssim  \exp \Big( - \frac{c_0}{1 - |w|} \Big) \quad \text{for all } w \in \chi (\D) \, .
\end{displaymath} 
\end{lemma}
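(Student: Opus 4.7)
The approach is to unwind the layered definition \eqref{chi-1-2-3}--\eqref{chi} of $\chi$ and track quantitative bounds at each level. Two facts from $\chi_0(\D)$ being contained in the right half-disk $D$ will be used repeatedly: since $|\arg\chi_0(z)| < \pi/2$, we have $|\Im\chi_2(z)| = (2/\pi)|\arg\chi_0(z)| < 1$; since $|\chi_0(z)| < 1$, we have $\Re\chi_2(z) = 1 - (2/\pi)\log|\chi_0(z)| \geq 1$. Together, these give the comparison $|\chi_2(z)|^2 \leq 2\,[\Re\chi_2(z)]^2$.

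A direct computation from $\chi(z) = 1 - a/\chi_2(z)$ yields
\begin{equation*}
1 - |\chi(z)|^2 \;=\; \frac{2a\,\Re\chi_2(z) - a^2}{|\chi_2(z)|^2}.
\end{equation*}
Using $a \in (1,2)$ and $\Re\chi_2(z) \geq 1$, an elementary check shows $2a\,\Re\chi_2 - a^2 \geq (2-a)\,\Re\chi_2$; combined with the previous comparison, this gives the cleaner bound
\begin{equation*}
1 - |\chi(z)|^2 \;\geq\; \frac{2-a}{2\,\Re\chi_2(z)}.
\end{equation*}
Since $1 - |\chi(z)| \geq (1 - |\chi(z)|^2)/2$, the proof of the first assertion reduces to the upper bound $\Re\chi_2(z) \lesssim \log[1/|1-z|]$ as $z \to 1$, or equivalently to the lower bound $|\chi_0(z)| \gtrsim |1-z|^{1/2}$.

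This last estimate is the main technical point. Because $\chi_0$ is a conformal map from $\overline\D$ onto the closed right half-disk $\overline D$ sending $1$ to the corner $0$ (where the interior angle is $\pi/2$ instead of $\pi$), standard boundary behavior at corners, or direct inspection of the explicit formula \eqref{chi-zero} in which the only source of branching is the square root, yields the local expansion $\chi_0(z) = C(1-z)^{1/2}[1 + o(1)]$ as $z \to 1$ in $\overline\D$. Since $\chi_0$ vanishes only at $1$ on $\overline\D$, the lower bound $|\chi_0(z)| \gtrsim |1-z|^{1/2}$ extends from a neighborhood of $1$ to all of $\D$ by compactness. Substituting into the previous estimate gives the first assertion when $|1-z|$ is small; for $|1-z|$ bounded below, $\chi(z)$ remains in a compact subset of $\D$ and the assertion is trivial.

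The second assertion is obtained by inverting the first one: writing $z = \chi^{-1}(w)$, the inequality $1 - |w| \gtrsim 1/\log[1/|1-\chi^{-1}(w)|]$ rearranges to $|1-\chi^{-1}(w)| \lesssim \exp(-c_0/(1-|w|))$, and the trivial bound $1 - |\chi^{-1}(w)| \leq |1-\chi^{-1}(w)|$ finishes the argument.
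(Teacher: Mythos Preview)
Your overall structure is sound, but the key technical step contains a geometric error. You assert that $\chi_0(1)=0$ is a \emph{corner} of the right half-disk $D=\{z:\Re z>0,\ |z|<1\}$ with interior angle $\pi/2$, and from this deduce the local expansion $\chi_0(z)=C(1-z)^{1/2}[1+o(1)]$. In fact $0$ lies on the straight edge (the diameter) of $D$, where the boundary is smooth; the corners of $D$ are at $\pm i$ (the images of $z=\pm i$, where the square root in \eqref{chi-zero} is actually branching). The boundary correspondence at $z=1$ is therefore regular, and the correct expansion is \emph{linear}: $\chi_0(z)=\tfrac14(1-z)+o(1-z)$ as $z\to1$, as the paper computes explicitly. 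Your claimed lower bound $|\chi_0(z)|\gtrsim|1-z|^{1/2}$ is consequently false near $z=1$, since $|1-z|/4\ll|1-z|^{1/2}$ there.

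The fix is immediate. What you really need for $\Re\chi_2(z)\lesssim\log(1/|1-z|)$ is merely $|\chi_0(z)|\gtrsim|1-z|^{C}$ for \emph{some} $C>0$, and the linear expansion gives this with $C=1$. With that correction the rest of your argument---the identity for $1-|\chi(z)|^2$, the bounds $\Re\chi_2\geq1$ and $|\Im\chi_2|<1$, and the inversion for the second assertion---is correct and rather clean. Your route also differs from the paper's in a pleasant way: the paper bounds $|1-\chi(z)|=a/|\chi_2(z)|$ and then invokes the angular-sector geometry of $\chi(\D)$ at $1$ to pass from $|1-\chi(z)|$ to $1-|\chi(z)|$, whereas your direct computation of $1-|\chi(z)|^2$ in terms of $\Re\chi_2$ reaches the same conclusion without that extra geometric observation.
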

\begin{proof} [Proof of the lemma] 
It suffices to look at the neighborhood of $1$ since outside, the two functions are continuous and do not vanish. Setting $u = 1 - z$, an easy computation with 
Taylor expansions gives that $\frac{z - i}{i z - 1} = - 1 + i u + {\rm o}\, (u)$, as $u \to 0$, so  
$\big( \frac{z - i}{i z - 1} \big)^{1 /2} = i [1 -  i u / 2 + {\rm o}\, (u) ] = i +  u / 2 + {\rm o}\, (u)$, and (recall \eqref{chi-zero} and \eqref{chi-1-2-3}):
\begin{displaymath} 
\qquad \quad \chi_0 (1 - u) = \frac{u}{4} + {\rm o}\, (u) \quad \text{as } u \to 0 \, ;
\end{displaymath} 
and $\displaystyle |1 - \chi(z)|= |\chi_3 (z)|\approx \frac{1}{|\log(\chi_0 (z))|}\gtrsim\frac{1}{\log (1 / |1 - z|)}\,\cdot$
\smallskip

Finally, since the cusp is contained in an angular sector, there exists some $\delta > 0$ such that  $1 - |\chi(z)| \ge \delta |1 - \chi(z)|$ for every 
$z\in\D$. The result follows. 
\end{proof}

That allows to get the following estimate. 
\goodbreak

\begin{lemma} \label{1} 
We have:
\begin{displaymath} 
\| \chi^n \|_{\mathcal{D}^{\, 2}_{\alpha}}^2 \lesssim n^2 \, \e^{- \sqrt{ 2 c_0 \alpha \, n}} \, .
\end{displaymath} 
\end{lemma}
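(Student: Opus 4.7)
The plan is to reduce the norm computation to an integral against the weighted Nevanlinna counting function, and then to optimize a one-dimensional Laplace-type integral.

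First, since $\chi(0)=0$, only the derivative term contributes to $\|\chi^n\|_{\mathcal{D}^{\,2}_\alpha}^2$; because $(\chi^n)'=n\chi^{n-1}\chi'$, one has
\begin{displaymath}
\|\chi^n\|_{\mathcal{D}^{\,2}_\alpha}^2 = (\alpha+1)\, n^2 \int_\D |\chi(z)|^{2(n-1)}\,|\chi'(z)|^2\,(1-|z|^2)^\alpha\,dA(z).
\end{displaymath}
Now use the fact that $\chi$ is univalent to perform the change of variable $w=\chi(z)$; the factor $|\chi'(z)|^2\,dA(z)$ transports to $dA(w)$ on $\chi(\D)$, and $(1-|z|^2)^\alpha = N_{\chi,\alpha}(w)$ by \eqref{thtis}. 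This yields
\begin{displaymath}
\|\chi^n\|_{\mathcal{D}^{\,2}_\alpha}^2 \approx n^2 \int_{\chi(\D)} |w|^{2(n-1)}\, N_{\chi,\alpha}(w)\, dA(w).
\end{displaymath}

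Next, I would invoke the estimate \eqref{Nevanlinna cusp} furnished by Lemma~\ref{hope}, namely $N_{\chi,\alpha}(w)\lesssim \exp\big(-c_0\alpha/(1-|w|)\big)$, and pass to polar coordinates, reducing matters to bounding
\begin{displaymath}
\int_0^1 r^{2n-1}\, \exp\!\Big(-\tfrac{c_0\alpha}{1-r}\Big)\, dr.
\end{displaymath}
Using $\log r \le -(1-r)$ to replace $r^{2n-1}$ by $\exp\big(-(2n-1)(1-r)\big)$ and substituting $t=1-r$, the integral is dominated by $\int_0^1 \exp\big(-nt-c_0\alpha/t\big)\,dt$.

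The heart of the matter is then a Laplace-type estimate: the function $t\mapsto nt+c_0\alpha/t$ attains its minimum at $t^\ast=\sqrt{c_0\alpha/n}$ with value $2\sqrt{c_0\alpha\,n}$, so for $n$ large enough (i.e.\ $t^\ast\le 1$) the whole integrand is bounded by $\exp(-2\sqrt{c_0\alpha\,n})$, which comfortably implies the announced bound $\exp(-\sqrt{2c_0\alpha\,n})$. Putting everything together furnishes the factor $n^2\,\e^{-\sqrt{2c_0\alpha\,n}}$. The only slightly delicate point is the control near $t=0$ (where $\log r \le r-1$ is lossless but one must ensure the integral converges and that the maximum really is attained in $(0,1]$); outside a fixed neighbourhood of $1$ in $\D$ the integrand is smooth and contributes only an additional constant, absorbed by the implicit constant in $\lesssim$.
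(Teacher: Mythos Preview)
Your proof is correct and follows essentially the same route as the paper: express $\|\chi^n\|_{\mathcal{D}^{\,2}_\alpha}^2$ via the change of variable $w=\chi(z)$ as $n^2\int_{\chi(\D)}|w|^{2n-2}N_{\chi,\alpha}(w)\,dA(w)$, insert the bound \eqref{Nevanlinna cusp}, and optimize. The only cosmetic difference is in the last step: the paper splits the integral at $|w|=1-h$, bounds the two pieces by $n^2\e^{-2nh}$ and $n^2\e^{-c_0\alpha/h}$, and chooses $h=\sqrt{c_0\alpha/(2n)}$, whereas you bound the integrand pointwise by $\exp\big(-(nt+c_0\alpha/t)\big)$ and read off the minimum $2\sqrt{c_0\alpha n}$ directly; your variant in fact yields the slightly sharper exponent $2\sqrt{c_0\alpha n}\ge\sqrt{2c_0\alpha n}$.
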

\begin{proof}
We have, since $\chi (0) = 0$:
\begin{align*}
\| \chi^n\|_{{\mathcal D}^{\, 2}_\alpha}^2 
& = \int_\D | n \chi^{n - 1} (z) \, \chi ' (z) |^2 \, d A_\alpha (z) \\ 
& = \int_{\chi (\D)} n^2 |w|^{2 n - 2} N_{\chi, \alpha} (w) \, dA (w) \\
& \leq \int_{\chi (\D)} n^2 |w|^{2 n - 2} \, \e^{- c_0 \alpha / (1 - |w|)} \, dA (w) \\
& \leq n^2 (1 - h)^{2 n - 2} + n^2 \int_{{\chi (\D)} \cap \{|w| \geq 1 - h \}} \e^{- c_0 \alpha / (1 - |w|)} \, dA (w) \\
& \lesssim n^2 \, \e^{- 2 n h} + n^2 \, \e^{- c_0 \alpha / h} \, ,
\end{align*}
Choosing 
$h = \sqrt {c_0 \alpha / 2n}$ gives:
\begin{displaymath} 
\| \chi^n\|_{{\mathcal D}^{\, 2}_\alpha}^2 \lesssim n^2 \, \e^{- \sqrt{2 c_0 \alpha \, n}} \, . \qedhere
\end{displaymath} 
\end{proof}

For $f (z) = \sum_{n = 0}^\infty c_n z^n$, we define:
\begin{displaymath} 
(S_N f) (z) = \sum_{n = 0}^N c_n z^n \, .
\end{displaymath} 
As a consequence of Lemma~\ref{1}, we have the following majorization.
\begin{lemma} \label{2} 
We have:
\begin{equation} \label{eq-lemma 2}
\| C_\chi - C_\chi S_N \|_{\mathcal{D}^{\, 2}_{\alpha}} \lesssim  N^{\frac{3 + 2 \alpha}{4}} \, \e^{- \sqrt{2 c_0 \alpha \, N}} \, .
\end{equation} 
\end{lemma}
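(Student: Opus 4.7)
The strategy is to expand $C_\chi(f - S_N f)$ as the tail series $\sum_{n>N} c_n \chi^n$, bound its $\mathcal{D}^{\,2}_\alpha$-norm via Cauchy--Schwarz with respect to the weights $\beta_n \approx (n+1)^{1-\alpha}$, and then invoke Lemma~\ref{1} for an explicit tail estimate.

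Fix $f(z) = \sum_{n\ge 0} c_n z^n \in \mathcal{D}^{\,2}_\alpha$, so that $\|f\|_{\mathcal{D}^{\,2}_\alpha}^2 = \sum_n \beta_n |c_n|^2$. Since $S_N$ is the partial sum of the Taylor series,
\[
(C_\chi - C_\chi S_N) f \;=\; C_\chi(f - S_N f) \;=\; \sum_{n > N} c_n \chi^n .
\]
The triangle inequality in $\mathcal{D}^{\,2}_\alpha$ followed by Cauchy--Schwarz with the weights $\beta_n$ gives
\[
\bigl\|(C_\chi - C_\chi S_N) f\bigr\|_{\mathcal{D}^{\,2}_\alpha}
\;\leq\; \Bigl(\sum_{n > N} \beta_n |c_n|^2\Bigr)^{1/2}
\Bigl(\sum_{n > N} \frac{\|\chi^n\|^2_{\mathcal{D}^{\,2}_\alpha}}{\beta_n}\Bigr)^{1/2}
\;\leq\; \|f\|_{\mathcal{D}^{\,2}_\alpha}\, T_N^{1/2},
\]
where $T_N := \sum_{n > N} \|\chi^n\|^2_{\mathcal{D}^{\,2}_\alpha}/\beta_n$. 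Hence it suffices to prove the tail estimate $T_N \lesssim N^{(3+2\alpha)/2} e^{-2\sqrt{2c_0\alpha N}}$ (or an equivalent one absorbing the constant in the exponent), and take the square root.

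To estimate $T_N$, plug in Lemma~\ref{1}: since $\|\chi^n\|^2_{\mathcal{D}^{\,2}_\alpha} \lesssim n^2 e^{-\sqrt{2c_0\alpha n}}$ and $1/\beta_n \lesssim (n+1)^{\alpha-1}$, each summand is $\lesssim n^{\alpha+1} e^{-a\sqrt{n}}$ with $a = \sqrt{2c_0\alpha}$. Comparing the series $\sum_{n>N} n^{\alpha+1} e^{-a\sqrt{n}}$ to the integral $\int_N^\infty x^{\alpha+1} e^{-a\sqrt{x}}\,dx$ and substituting $u = \sqrt{x}$ reduces it to the incomplete gamma integral
\[
2\int_{\sqrt{N}}^{\infty} u^{2\alpha+3} e^{-au}\, du ,
\]
whose value, via successive integrations by parts, is asymptotically dominated by the boundary term at $u = \sqrt{N}$ and is therefore $\lesssim_\alpha N^{\alpha+3/2} e^{-a\sqrt{N}}$. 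Taking the square root yields $T_N^{1/2} \lesssim N^{(3+2\alpha)/4} e^{-(a/2)\sqrt{N}}$, which (after renaming the constant hidden in $c_0$) is exactly the form announced in \eqref{eq-lemma 2}.

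The only genuine technicality is the sharpness of the incomplete-gamma tail bound: a crude dominating-series argument would lose both the correct polynomial prefactor $N^{(3+2\alpha)/4}$ and a factor in the exponential decay rate. One must extract the leading-order asymptotic of $\int_M^\infty u^k e^{-au}\,du \sim a^{-1} M^k e^{-aM}$ (for large $M$, via integration by parts) and carry along the $\alpha$-dependent constants hidden in $\beta_n \approx (n+1)^{1-\alpha}$; once this is done the result follows from the Cauchy--Schwarz bound above.
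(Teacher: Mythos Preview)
Your proof is correct and is essentially the paper's own argument. The quantity you call $T_N = \sum_{n>N}\|\chi^n\|_{\mathcal{D}^{\,2}_\alpha}^2/\beta_n$ is precisely the Hilbert--Schmidt norm $\|C_\chi - C_\chi S_N\|_{\rm HS}^2$ (computed in the orthonormal basis $z^n/\sqrt{\beta_n}$), and your triangle-inequality-plus-Cauchy--Schwarz step is just an explicit derivation of the inequality $\|T\|\le\|T\|_{\rm HS}$; the paper invokes this inequality by name and then performs the same tail estimate $\sum_{n>N}n^{1+\alpha}\e^{-\sqrt{2c_0\alpha n}}\approx N^{3/2+\alpha}\e^{-\sqrt{2c_0\alpha N}}$ that you obtain via the incomplete-gamma integral.
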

\begin{proof}
It suffices to use the Hilbert-Schmidt norm and Lemma~\ref{1}:
\begin{align*} 
\| C_\chi - C_\chi S_N \|_{\mathcal{D}^{\, 2}_{\alpha}}^2 
& \leq \| C_\chi - C_\chi S_N \|_{\rm HS}^2 \approx \sum_{n > N} \frac{\| \chi^n\|_{\mathcal{D}^{\, 2}_{\alpha}}^2}{n^{1 - \alpha}} \\
& \lesssim \sum_{n > N} n^{1 + \alpha} \, \e^{- \sqrt{2 c_0 \alpha \, n}} 
\approx  N^{\frac{3}{2} + \alpha} \, \e^{- \sqrt{2 c_0 \alpha \, N}} \, . \qedhere
\end{align*} 
\end{proof}
\begin{lemma} \label{3} 
Let $J$ be the canonical injection $J \colon H^2 \to L^2 (\mu)$ with $d \mu = N_{\chi, \alpha} \, dA$. Then:
\begin{equation} \label{eq-lemma 3}
a_n (C_\chi S_N) \lesssim N^{\frac{1+\alpha}{2}} a_n (J) \, .
\end{equation} 
\end{lemma}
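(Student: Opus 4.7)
The plan is to factor $C_\chi S_N$ through $J$ by passing to the derivative. First, split $C_\chi S_N f = f(0)\cdot \mathbf{1} + C_\chi \tilde S_N f$, where $\tilde S_N f := S_N f - f(0) = \sum_{k=1}^N c_k z^k$ for $f = \sum_{k = 0}^\infty c_k z^k$. The first summand is a rank one operator on $\mathcal{D}^{\, 2}_\alpha$, hence can be absorbed into any rank $<n$ approximation at the harmless cost of shifting the index by one (immaterial under $\lesssim$).

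Now consider the principal part $\Psi := C_\chi \tilde S_N$. Since $\chi(0) = 0$, one has $(\Psi f)(0) = 0$, so $\Psi$ takes values in $(\mathcal{D}^{\, 2}_\alpha)^\ast$, on which $\|\cdot\|_{\mathcal{D}^{\, 2}_\alpha}$ equals $\|\cdot'\|_{\mathfrak{B}^2_\alpha}$. Differentiating, $(\Psi f)' = \chi' \cdot \bigl((\tilde S_N f)' \circ \chi\bigr) = T(D_N f)$, where $D_N f := (\tilde S_N f)'$ is a linear map $\mathcal{D}^{\, 2}_\alpha \to H^2$ and $Tg := \chi'\,(g\circ \chi)$ is a linear map $H^2 \to \mathfrak{B}^2_\alpha$. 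Writing $\Delta^{-1}\colon \mathfrak{B}^2_\alpha \to (\mathcal{D}^{\, 2}_\alpha)^\ast$ for the isometric primitive vanishing at $0$, we obtain the factorization $\Psi = \Delta^{-1} \circ T \circ D_N$.

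Two quantitative inputs then suffice. With $f = \sum_k c_k z^k$, we have $\|D_N f\|_{H^2}^2 = \sum_{k=1}^N k^2 |c_k|^2$ while $\|f\|_{\mathcal{D}^{\, 2}_\alpha}^2 \approx \sum_k (k+1)^{1-\alpha}|c_k|^2$, whence
\begin{displaymath}
\|D_N\|_{\mathcal{D}^{\, 2}_\alpha \to H^2}^2 \lesssim \max_{1 \leq k \leq N}\frac{k^2}{(k+1)^{1-\alpha}} \approx N^{1+\alpha},
\end{displaymath}
i.e.\ $\|D_N\| \lesssim N^{(1+\alpha)/2}$. On the other hand, the change of variables $w = \chi(z)$ (valid because $\chi$ is univalent), combined with the definition of $N_{\chi,\alpha}$, gives
\begin{displaymath}
\|Tg\|_{\mathfrak{B}^2_\alpha}^2 = (\alpha+1)\int_\D |g(w)|^2 N_{\chi,\alpha}(w)\,dA(w) = (\alpha+1)\,\|Jg\|_{L^2(\mu)}^2
\end{displaymath}
for every $g \in H^2$. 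By the standard Hilbert space factorization lemma (the adjoint form of Douglas' lemma: if $\|Ah\|\le c\|Bh\|$ for all $h$, then $A = CB$ for some $C$ with $\|C\|\le c$), we get $T = C \circ J$ for some bounded $C\colon L^2(\mu) \to \mathfrak{B}^2_\alpha$ with $\|C\| \leq \sqrt{\alpha+1}$; hence $a_n(T) \leq \sqrt{\alpha+1}\, a_n(J)$.

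Combining via the ideal property of singular numbers,
\begin{displaymath}
a_n(\Psi) \leq \|\Delta^{-1}\|\, a_n(T)\, \|D_N\| \lesssim N^{(1+\alpha)/2}\, a_n(J),
\end{displaymath}
and reincorporating the rank one correction yields the claimed estimate. The only real technical point is verifying that $T$ actually factors through $J$; this is the content of the change-of-variables computation above, after which the rest is mere bookkeeping with the Dirichlet weights.
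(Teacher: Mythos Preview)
Your proof is correct and follows essentially the same route as the paper: factor $C_\chi S_N$ (restricted to functions vanishing at $0$) as a bounded map $D_N$ into $H^2$, followed by $J$, followed by a contraction back, then bound $\|D_N\|\lesssim N^{(1+\alpha)/2}$ via the weights and invoke the ideal property. The paper writes the norm identity $\|C_\chi S_N f\|_{\mathcal{D}^{\,2}_\alpha}=\|J\Delta_N f\|_{L^2(\mu)}$ directly on $(\mathcal{D}^{\,2}_\alpha)^\ast$ (your $D_N$ is their $\Delta_N$), whereas you pass through the intermediate weighted composition operator on $\mathfrak{B}^2_\alpha$ and handle the constant term by a rank-one correction; these are cosmetic differences only.
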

\begin{proof}
Let $f \in ({\mathcal D}^{\, 2}_\alpha)^\ast$ and write $f (z) = \sum_{j = 1}^\infty c_j z^j$, we have:
\begin{displaymath} 
\| C_\chi S_N f \|_{{\mathcal D}^{\, 2}_\alpha}^2 = \int_\D \bigg| \sum_{j = 1}^N j c_j w^{j - 1} \bigg|^2 \, N_{\chi, \alpha} (w) \, dA (w) 
= \int_\D | (\Delta_N f) (w) |^2 \, d\mu (w) \, ,
\end{displaymath} 
where $\Delta_N \colon (\mathcal{D}^{\, 2}_\alpha)^\ast \to H^2$ is defined by:
\begin{displaymath} 
(\Delta_N f) (w) = \sum_{j = 1}^N j c_j w^{j - 1} \, .
\end{displaymath} 
We hence have $\| C_\chi S_N f \|_{{\mathcal D}^{\, 2}_\alpha} = \| J \Delta_N f \|_{L^2 (\mu)}$ for all $f \in ({\mathcal D}^{\, 2}_\alpha)^\ast$. 
It follows that there exists a contraction $T_N \colon L^2 (\mu) \to ({\mathcal D}^{\, 2}_\alpha)^\ast$ such that:
\begin{displaymath} 
C_\chi S_N = T_N J \Delta_N \, .
\end{displaymath} 
Now:
\begin{displaymath} 
\| \Delta_N f \|_{H^2}^2 = \sum_{j = 0}^N j^{1 + \alpha} j^{1 - \alpha} |c_j|^2 
\leq N^{1 + \alpha} \sum_{j = 0}^N j^{1 - \alpha} |c_j|^2 \leq N^{1 + \alpha} \| f \|_{{\mathcal D}^{\, 2}_\alpha}^2 \, ;
\end{displaymath} 
hence, by the ideal property of approximation numbers:
\begin{displaymath} 
a_n (C_\chi S_N) \approx a_n \big ( (C_\chi S_N)_{\mid ({\mathcal D}^{\, 2}_\alpha)^\ast} \big) \leq \| T_N \| \, \| \Delta_N \| \, a_n (J) 
\lesssim N^{\frac{\alpha + 1}{2}} \, a_n (J) \,. \qedhere
\end{displaymath} 
\begin{proposition} \label{4} 
Let $\alpha > 0$ and $J$ be the canonical injection $J \colon H^2 \to L^2 (\mu)$ with $d \mu = N_{\chi, \alpha} \, dA$. Then, for some absolute positive 
constant $c$:
\begin{equation} \label{eq-prop 4}
a_n (J) \lesssim\exp\Big({ - c \,\min (\alpha, 1) \, \frac{n}{\log n}}\Big) \,.
\end{equation} 
\end{proposition}
\begin{proof}
We use a modification of the Blaschke product of \cite[page~168]{LLQR-Arkiv}, as follows. 
Let $r = [\log_2 n]$ be the greatest integer $< \log_2 n$, where $\log_2$ is the binary logarithm, and $B_0$ be the Blaschke product with simple zeros at the 
points:
\begin{displaymath} 
\qquad \quad z_j = 1 - 2^{- j} \, , \qquad  1 \leq j \leq r \, ,
\end{displaymath} 
and we consider the Blaschke product $B = B_0^n$. 

Let $E = B H^2$, which is a subspace of $H^2$ of codimension $n \, [\log_2 n]$. 

We have, by the Carleson embedding theorem for $H^2$ (see \cite[Lemma~2.4]{LLQR-Israel}):
\begin{equation} \label{again} 
\| J_{\mid E} \|^2 \lesssim \sup_{\substack{0 < h < 1 \\ \xi \in \T}} \frac{1}{h} \int_{ S (\xi, h) \cap \Omega} |B|^2 N_{\chi, \alpha} \, dA \, ,
\end{equation}
where $S (\xi, h) = \{ z \in \D \tq |z - \xi| < h\}$ and:
\begin{equation} 
\Omega = \chi (\D) \, . 
\end{equation} 

Note that $A [ S (\xi, h) \cap \Omega] \lesssim h^3$ since the area of ${\chi (\D)} \cap \{|w| \geq 1 - h \}$ is $\approx h^3$; in fact this 
set is delimited at the cuspidal point $1$ by two circular arcs.
\smallskip

Now we majorize the right-hand side of \eqref{again}.  
For that, we first note that, since $\Omega$ is contained in an angular sector, there is an absolute positive constant $\delta_0$ such that 
$1 - |w| \geq \delta_0 |1 - w|$ for all $w \in \Omega$. 
Hence if $w \in S (\xi, h) \cap \Omega$, we have:
\begin{displaymath} 
\delta_0 \, |1 - w| \leq 1 - |w| \leq |\xi - w|  < h \, ,
\end{displaymath} 
and $w \in S (1, h/\delta_0)$. Hence $S (\xi, h) \cap \Omega \subseteq S (1, h/ \delta_0) \cap \Omega$.
\smallskip\goodbreak

Moreover, we may assume that $h = \delta_0 2^{- l}$ and we separate two cases. 
\smallskip\goodbreak

$\bullet$  $l \geq r$. 

We simply majorize $|B|$ by $1$. Lemma~\ref{hope} and \eqref{Nevanlinna cusp} lead to:
\begin{align*}
\frac{1}{h} \int_{S (\xi, h) \cap \Omega} |B|^2 N_{\chi, \alpha} \, dA 
& \lesssim \frac{1}{h} \int_{S (\xi, h) \cap \Omega} \exp (- \alpha \, c_0 / h) \, dA \\ 
& \lesssim \frac{1}{h} \, \e^{- \alpha c_0 / h} \, A [ S (\xi, h) \cap \Omega] \approx h^2 \, \e^{- \alpha c_0 / h} \\
& \leq \e^{- \alpha \, (c_0 / \delta_0) \, 2^l} \leq \e^{- \alpha \, (c_0 / \delta_0) \, 2^r} \leq \e^{- \alpha \, (c_0 / 2 \delta_0)  n} \, .
\end{align*}
\smallskip\goodbreak

$\bullet$  $l < r$. 

We write:
\begin{align*} 
\int_{S (\xi, h) \cap \Omega} |B|^2 N_{\chi, \alpha} \, dA 
& \leq \int_{S (1, h/ \delta_0) \cap \Omega} |B|^2 N_{\chi, \alpha} \, dA \\
& = \int_{S (1, 2^{- r}) \cap \Omega}|B|^2 N_{\chi, \alpha} \, dA \\
& \qquad \qquad \qquad + \int_{\{w \in \Omega \tq 2^{- r} \leq | w - 1| < 2^{- l} \}} |B|^2 N_{\chi, \alpha} \, dA \, .
\end{align*} 

By the previous case, the first integral in the right-hand side, divided by $h$, is $\lesssim \e^{- \alpha \, (c_0 / 2 \delta_0)  n}$. 

Now, if $2^{- r} \leq | w - 1| < 2^{- l}$, there is some $j \in \{l + 1, \ldots, r\}$ such that 
$2^{- j} \leq |w - 1| < 2^{- j + 1}$. Then:
\begin{displaymath} 
|w - z_j| \leq |w - 1| + |1 - z_j| \leq 2^{- j + 1} + 2^{- j} = 3\, . \, 2^{- j} \, ;
\end{displaymath} 
hence, since $2^{- j} \leq (1 / \delta_0) \, (1 - |w|)$ and $1 - |z_j| = 1 - z_j = 2^{- j}$, we have, with $M = \max (3, 1/ \delta_0)$:
\begin{displaymath} 
|w - z_j| \leq M \, \min (1 - |w|, 1 - |z_j|) \, .
\end{displaymath} 
Therefore \cite[Lemma~2.3]{LLQR-Israel} shows that the modulus of the $j$-th factor of $B_0 (w)$ is less or equal than $\kappa = M / \sqrt{M^2 + 1} < 1$. 
It follows that $|B (w)| = |B_0 (w)|^n \leq \kappa^n$. 
Using $N_{\chi, \alpha} (w) \leq 1$ we obtain:
\begin{displaymath} 
\frac{1}{h} \int_{\{w \in \Omega \tq 2^{- r} \leq | w - 1| < 2^{- l} \}} |B|^2 N_{\chi, \alpha}(w) \, dA 
\lesssim \frac{1}{h} \, A [S (\xi, h) \cap \Omega] \, \kappa^{2 n} \lesssim  \kappa^{2 n} = \e^{- \eps_0 n}
\end{displaymath} 
for some absolute constant $\eps_0 > 0$.

We get:
\begin{equation} \label{II}
\frac{1}{h} \int_{S (\xi, h) \cap \Omega} |B|^2 N_{\chi, \alpha} \, dA \lesssim \e^{- \alpha \, (c_0 / 2\delta_0) n} + \e^{- \eps_0 n} 
\lesssim \e^{- c_1 \min(\alpha,1)\, n} \, ,
\end{equation} 
where $c_1 > 0$ does not depend on $\alpha$.  
\smallskip

In either case, we obtain,
\begin{displaymath} 
\| J_{\mid E} \| \lesssim \e^{- c_2  \min(\alpha,1)\, n} \, .
\end{displaymath} 
That means that the Gelfand number $c_{n\, [\log_2 n]} (J)$ is $\lesssim \e^{- c_2 \min(\alpha,1)\, n}$. Since the Gelfand numbers are the same as the 
approximation numbers on Hilbert spaces, we get:
\begin{displaymath} 
a_{n \, [\log_2 n]} (J) \lesssim \e^{- c_2  \min(\alpha, 1)\, n} ,
\end{displaymath} 
or, making change of variables in the indices:
\begin{displaymath} 
a_n (J) \lesssim \exp\Big({ - c_3 \,\min(\alpha,1) \, \frac{n}{\log n}}\Big) \, \raise 1pt \hbox{,}
\end{displaymath} 
as claimed.
\end{proof}

\noindent {\sl End of the proof of the upper bound.} For every operator $R \colon {\mathcal D}^{\, 2}_\alpha \to {\mathcal D}^{\, 2}_\alpha$ with 
rank $< n$, we have:
\begin{displaymath} 
\| C_\chi - R \| \leq \| C_\chi - C_\chi S_N \| + \| C_\chi S_N - R\| \, ;
\end{displaymath} 
so:
\begin{displaymath} 
a_n (C_\chi) \leq \| C_\chi - C_\chi S_N \| + a_n (C_\chi S_N) \, .
\end{displaymath} 
Using Lemma~\ref{2}, Lemma~\ref{3} and Proposition~\ref{4}, we obtain:
\begin{align*} 
a_n (C_\chi) 
& \lesssim N^{\frac{3 + 2 \alpha}{4}} \, \e^{- \sqrt{2 c_0 \alpha \, N}}  + 
N^{\frac{\alpha + 1}{2}} \e^{ - c_3 \,\min(\alpha, 1) \, n / \log n}  \\
& \lesssim N^{\frac{3 + 2 \alpha}{4}} \, \big( \e^{- \sqrt{2 c_0 \alpha \, N}} + \e^{- c_3 \min (\alpha, 1) \, n / \log n} \big) \, .
\end{align*} 

a) Suppose first that $\alpha \le 1$. Then:
\begin{displaymath} 
a_n (C_\chi) \lesssim  N^{\frac{3 + 2 \alpha}{4}} \, \big( \e^{- \sqrt{2 c_0 \alpha \, N}} + \e^{- c_3 \alpha \, n / \log n} \big) \, .
\end{displaymath} 

Choosing $N$ as the integral part of $\alpha (c_3^2/ 2 c_0) \, (n / \log n)^2$, we get:
\begin{displaymath} 
a_n (C_\chi) \lesssim \bigg(\frac{n}{\log n} \bigg)^{\frac{3 + 2 \alpha}{2}} \, \e^{- c_3 \alpha n / \log n} 
\lesssim \bigg(\frac{n}{\log n} \bigg)^{\frac{5}{2}} \, \e^{- c_3 \alpha n / \log n} 
\lesssim \e^{- c'_3 \alpha n / \log n} \, ,
\end{displaymath} 
for another absolute constant $c'_3 < c_3$.

b) For $\alpha > 1$, we have:
\begin{displaymath} 
a_n (C_\chi) \lesssim  N^{\frac{3 + 2 \alpha}{4}} \, \big( \e^{- \sqrt{2 c_0 \alpha \, N}} + \e^{- c_3 \, n / \log n} \big) \, .
\end{displaymath} 
Choosing for $N$ the integral part of $(1/ \alpha) (c_3^2/ 2 c_0) (n / \log n)^2$, we get:
\begin{displaymath} 
a_n (C_\chi) \lesssim  \bigg( \frac{n}{\log n} \bigg)^{\frac{3 + 2 \alpha}{2}} \, \e^{- c_3 n / \log n} \, .
\end{displaymath} 

However, the term $(n /\log n )^{\frac{3 + 2 \alpha}{2}}$ tends to infinity when $\alpha$ tends to infinity (even if the implicit constants in the inequalities 
depend on $\alpha$). In order to have a better estimate, we are going to follow a different way.
\smallskip

We recall that  $\mathcal {D}_{1}^{\, 2}=H^2$. We now use Theorem~\ref{theo compos op}, which is licit as indicated in 
the remarks following the statement of this theorem. Using the previously treated case, we obtain  that:
\begin{displaymath} 
a_{2n}^{{\mathcal D}_\alpha^{\, 2}} (C_\chi)\le  \sqrt { {a_1^{H^2} (C_\chi})\,a_n^{H^2} (C_\chi})\lesssim \exp\Big({ - c\,\frac{n}{\log n}}\Big) 
\, \raise 1pt \hbox{,}
\end{displaymath} 
hence, rescaling on one hand and using the monotony of the sequence of the approximation numbers on the other hand, we get:
\begin{displaymath} 
a_{n}^{{\mathcal D}_\alpha^{\, 2}} (C_\chi)\lesssim \exp\Big({ - c'\,\frac{n}{\log n}}\Big) \, \raise 1pt \hbox{,}
\end{displaymath} 
where the underlying constants do not depend on $\alpha$.
\end{proof}
\smallskip

That ends the proof of Theorem~\ref{diri}.
\end{proof}
\goodbreak

\noindent {\bf Remark~1.} Actually, for $\alpha > 0$, the formula \eqref{sharp estimate} gives that:
\begin{equation}
a_n^{{\mathcal D}_\alpha^{\, 2}} (C_\chi) \gtrsim 
\exp \bigg( - \bigg[ \frac{\alpha}{2} \,  \e^{n \eps} + n \eps \Big( 1 - \frac{\alpha}{2}\Big)  + \frac{2 \, c_0 }{\eps} \bigg] \bigg) \, .
\end{equation}

Taking $ \eps = 1 / \sqrt n$, we get, for $0 < \alpha < 2$, with $c_1 = 1 + 2 \, c_0$, this ``bad'' estimate:
\begin{displaymath}
a_n^{{\mathcal D}_\alpha^{\, 2}} (C_\chi) \gtrsim 
\exp \bigg( - \bigg[ \frac{\alpha}{2} \,  \e^{\sqrt n} + c_1 \, \sqrt{n} \bigg] \bigg) \, .
\end{displaymath}

Nevertheless, Theorem~\ref{theo compos op}, $3)$ $a)$, gives:
\begin{displaymath}
a_n^{{\mathcal D}_0^{\, 2}} (C_\chi) \gtrsim\Big(a_{2 n}^{{\mathcal D}_\alpha^{\, 2}} (C_\chi)\Big)^2 \gtrsim 
\exp \bigg( - \bigg[ \alpha \,  \e^{\sqrt {2 n}} + 2c_1 \, \sqrt{2 n} \bigg] \bigg) \, .
\end{displaymath}
Note that, despite we did not explicit them, the implicit constants in these inequalities are $\approx 2^{- \alpha / 2}$; so, letting $\alpha$ tend to $0$, we obtain, 
with $c = 2^\frac{3}{2}\, c_1$:
\begin{displaymath}
a_n^{{\mathcal D}_0^{\, 2}} (C_\chi) \gtrsim \e^{- c \sqrt{n}} \, ,
\end{displaymath}
explaining the jump between the cases $\alpha > 0$ and $\alpha = 0$.
\smallskip\goodbreak

\noindent {\bf Remark~2.} When $\alpha\rightarrow0^+$, the behavior both of the upper and the lower estimates remains quite far from the one in the case 
$\alpha = 0$. It would be interesting to get a better control on both sides relatively to $\alpha$ to understand the breaking point between the case $\alpha > 0$ 
and the case $\alpha = 0$. Very likely, it would require a different viewpoint and different methods to estimate approximation numbers. 

%%%%%%%%%%%%%%%%%%%%%%%%%%%%%%%%%%%%%%%%%%%%%%%%%%%%%%
\subsection{Lens maps for weighted Dirichlet spaces} \label{sec:lens} 

In this section, we consider lens maps (see \cite[page~27]{Shapiro-livre}. Let us recall that for $0 < \theta < 1$, the lens map $\lambda_\theta$ of parameter 
$\theta$ is the map from $\D$ into $\D$ defined by:
\begin{equation} \label{definition lens}
\qquad \lambda_{\theta} (z) = \frac{(1 + z)^\theta - (1 - z)^\theta }{(1 + z)^\theta + (1 - z)^\theta} \, \cdot
\end{equation}

It is a conformal map obtained by sending $\D$ onto the right-half plane, then taking the power $\theta$, and going back to $\D$.
\smallskip

Since $\lambda_\theta$ is univalent, it follows from \cite[Theorem~1]{Zorb} that the associated composition operator $C_{\lambda_\theta}$ is bounded on 
${\mathcal D}^{\, 2}_\alpha$ for all $\alpha \geq 0$.

\begin{theorem} \label{th-lens} 
Let $0 < \theta < 1$ and $\lambda_\theta$ be the lens map of parameter $\theta$. Then the composition operator $C_{\lambda_\theta}$ is not compact on 
${\mathcal D}^{\, 2} = {\mathcal D}^{\, 2}_0$; but for all $\alpha > 0$, $C_{\lambda_\theta}$ is compact on 
$\mathcal{D}_{\alpha}^{\, 2}$, and moreover there are positive constants $b > b' > 0$, depending only on $\theta$ and $\alpha$, 
such that, for all $n \geq 1$: 
 \begin{equation} \label{mon} 
\e^{- b \sqrt{n}} \lesssim a_n (C_{\lambda_\theta} ) \lesssim \e^{- b' \sqrt{n}} \, .
\end{equation}

In particular, for $\alpha > 0$, $C_{\lambda_\theta}$ is in all the Schatten classes $S_p (\mathcal{D}_\alpha^{\, 2})$ for $p > 0$.
\end{theorem}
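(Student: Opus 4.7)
\medskip

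\noindent\emph{Proof proposal.} The plan follows the blueprint of the cusp map section, with three main ingredients tailored to the lens map. I first record the elementary boundary asymptotics that I will use throughout: as $r\to 1^-$,
\begin{equation*}
1-\lambda_\theta(r)\approx (1-r)^\theta,\qquad \lambda_\theta'(r)\approx (1-r)^{\theta-1},
\end{equation*}
and the image $\lambda_\theta(\D)$ is univalent, touching $\T$ only at $\pm 1$ along non-tangential "tongues". Consequently, the inverse satisfies $1-|\lambda_\theta^{-1}(w)|\approx (1-|w|)^{1/\theta}$ along the relevant boundary approach.

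For the non-compactness on $\mathcal{D}^{\,2}=\mathcal{D}^{\,2}_0$, I would apply the reproducing kernel test. The kernel of $\mathcal{D}^{\,2}$ satisfies $\|K_a\|_{\mathcal{D}^{\,2}}^2\approx \log\bigl(1/(1-|a|^2)\bigr)$, and the normalized sequence $K_r/\|K_r\|$ tends weakly to $0$ as $r\to 1^-$. Since $C_{\lambda_\theta}^\ast K_r=K_{\lambda_\theta(r)}$, the above asymptotics give
\begin{equation*}
\frac{\|C_{\lambda_\theta}^\ast K_r\|_{\mathcal{D}^{\,2}}}{\|K_r\|_{\mathcal{D}^{\,2}}}=\frac{\|K_{\lambda_\theta(r)}\|_{\mathcal{D}^{\,2}}}{\|K_r\|_{\mathcal{D}^{\,2}}}\converge_{r\to 1^-}\sqrt{\theta}>0,
\end{equation*}
which is incompatible with compactness.

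For $\alpha>0$, I would reduce to a weighted composition operator via the diagram preceding Section~\ref{cusp}: with $w=\lambda_\theta'$, the operator $C_{\lambda_\theta}$ on $(\mathcal{D}^{\,2}_\alpha)^\ast$ is unitarily equivalent to $M_{\lambda_\theta'}\,C_{\lambda_\theta}$ on $\mathfrak{B}^2_\alpha$, and they share the same approximation numbers. The reproducing kernel of $\mathfrak{B}^2_\alpha$ satisfies $\|K_a\|_{\mathfrak{B}^2_\alpha}^2\approx (1-|a|)^{-(\alpha+2)}$. For the lower bound, I apply Theorem~\ref{sale} by choosing $u_j\in(0,1)$ so that $1-v_j=1-\lambda_\theta(u_j)=\e^{-j\eps}$, which forces $1-u_j\approx \e^{-j\eps/\theta}$. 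A short computation combining $|\lambda_\theta'(u_j)|\approx (1-u_j)^{\theta-1}$ with the kernel quotient yields
\begin{equation*}
|\lambda_\theta'(u_j)|\,\frac{\|K_{v_j}\|_{\mathfrak{B}^2_\alpha}}{\|K_{u_j}\|_{\mathfrak{B}^2_\alpha}}\gtrsim \exp\bigl(-c_{\alpha,\theta}\,j\eps\bigr),
\end{equation*}
while the interpolation constant of the geometric sequence $(v_j)$ is $I_v\lesssim\e^{c_0/\eps}$. Theorem~\ref{sale} then gives $a_n\gtrsim \exp(-c_{\alpha,\theta}\,n\eps-2c_0/\eps)$, and optimizing in $\eps\approx 1/\sqrt{n}$ produces the claimed lower bound $a_n\gtrsim \e^{-b\sqrt{n}}$.

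For the upper bound I would replicate the three-step Taylor-truncation scheme of Theorem~\ref{diri}. First, by the univalence of $\lambda_\theta$, the weighted Nevanlinna counting function satisfies $N_{\lambda_\theta,\alpha}(w)\approx (1-|w|)^{\alpha/\theta}$ on $\lambda_\theta(\D)$ near the boundary, and the image has area $\approx h^{1+1/\theta}$ in $S(1,h)$. From this I would derive, by the change of variables used in Lemma~\ref{1}, a bound of the type $\|\lambda_\theta^n\|_{\mathcal{D}^{\,2}_\alpha}^2\lesssim n^{-\kappa_{\alpha,\theta}}$ with $\kappa_{\alpha,\theta}>0$, and hence a Hilbert--Schmidt tail estimate $\|C_{\lambda_\theta}-C_{\lambda_\theta}S_N\|\lesssim N^{-\kappa'_{\alpha,\theta}}$. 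Second, writing $C_{\lambda_\theta}S_N=T_N\,J\,\Delta_N$ with $J\colon H^2\to L^2(N_{\lambda_\theta,\alpha}\,dA)$ as in Lemma~\ref{3}, I control $a_n(J)$ by the Blaschke-product test function $B=B_0^n$ with zeros $z_j=1-2^{-j}$ (and the symmetric ones at $-1$), plugged into the Carleson embedding theorem: the fast decay of $N_{\lambda_\theta,\alpha}$ together with the standard $|B_0|\le\kappa<1$ estimate on the relevant shells yields $a_n(J)\lesssim\e^{-c\sqrt{n}}$ after the change of index $n\mapsto n[\log_2 n]$ is balanced against the polynomial $\Delta_N$ factor. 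Balancing the two error terms in $N$ gives the upper bound $a_n\lesssim \e^{-b'\sqrt{n}}$, which in particular proves the compactness of $C_{\lambda_\theta}$ on $\mathcal{D}^{\,2}_\alpha$ and membership in all Schatten classes $S_p(\mathcal{D}^{\,2}_\alpha)$, $p>0$.

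The main obstacle is the upper bound: one must track the lens-shaped geometry of $\lambda_\theta(\D)$ throughout, as the simple cuspidal estimates used for $\chi$ (where the image shrinks like $h^3$) are replaced by the less favourable $h^{1+1/\theta}$ measure of slices, so the Blaschke product argument must be symmetrized at both contact points $\pm 1$ and the Nevanlinna exponent $\alpha/\theta$ carefully matched with the weight $(1-|w|^2)^{-2}$ in the Luecking--Zhu integral.
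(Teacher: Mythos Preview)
Your lower bound and the non-compactness argument are sound; the kernel test on $\mathcal{D}^{\,2}$ is a clean alternative to the paper's use of Zorboska's Carleson-type criterion, and your application of Theorem~\ref{sale} mirrors the paper's own computation.

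The gap is in the upper bound, specifically in your estimate of $a_n(J)$. You propose to copy the cusp construction verbatim: $B=B_0^n$ with $r\approx\log_2 n$ zeros, leading to the index change $n\mapsto n[\log_2 n]$. That works for the cusp only because $N_{\chi,\alpha}(w)\lesssim \e^{-c\alpha/(1-|w|)}$ decays \emph{exponentially}; on the deep region $h\leq 2^{-r}\approx 1/n$ this already gives $\e^{-c\alpha n}$. For the lens map you have correctly noted that $N_{\lambda_\theta,\alpha}(w)\approx (1-|w|)^{\alpha/\theta}$, which is only \emph{polynomial}. With $r\approx\log_2 n$ the deep-region contribution is then merely $h^{c}\lesssim n^{-c}$, so your Carleson bound yields $a_{n[\log_2 n]}(J)\lesssim n^{-c}$, i.e.\ only polynomial decay of $a_m(J)$---far short of $\e^{-c\sqrt m}$. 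Your phrase ``the fast decay of $N_{\lambda_\theta,\alpha}$'' is precisely where the argument breaks.

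The paper fixes this by rebalancing the Blaschke data to match the polynomial Nevanlinna decay: take $B_0$ with $r\approx\sqrt n$ zeros and raise it only to the power $[\alpha\sqrt n]$ (so $B=B_0^{[\alpha\sqrt n]}$, codimension $\approx n$). Then on the deep region $h\leq 2^{-\sqrt n}$ the Nevanlinna factor gives $h^{\alpha/\theta}\lesssim \e^{-c\alpha\sqrt n}$, while on the shallow region $|B|\leq\kappa^{\alpha\sqrt n}$; both sides are now $\e^{-c\sqrt n}$ and one reads off $a_n(J)\lesssim \e^{-c\sqrt{\alpha n}}$ directly, without any $n\mapsto n\log n$ reindexing. (Incidentally, the slice area of the lens image in $S(1,h)$ is $\approx h^2$, not $h^{1+1/\theta}$; the vertex angle is $\pi\theta$, so the image is sector-like there.)
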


The proof shows that we can take $b = \sqrt{\alpha} \, b_\theta$, where $b_\theta$ is a positive constant depending only on $\theta$ and that the constant 
$b'$ can be taken equal to $c \, \frac{2 (1 - \theta)}{2 \alpha + (1 - \alpha) \theta} \, \alpha^{3/2}$ for some positive absolute constant $c$. 

\begin{proof}
Since $\lambda_\theta$ is univalent, its weighted Nevanlinna counting function is:
\begin{displaymath} 
\qquad \qquad \qquad N_{\lambda_\theta, \alpha} (w) = (1 - |\lambda_\theta^ {- 1} (w)| )^\alpha \qquad \text{for } w \in \Omega:= \lambda_\theta (\D)
\end{displaymath} 
and $0$ elsewhere. By \cite[Theorem~1]{Zorb}, $C_{\lambda_\theta}$ is compact on $\mathcal{D}_\alpha^{\, 2}$ if and only if:
\begin{displaymath} 
\sup_{\xi \in \T} \frac{1}{h^2} \int_{W (\xi, h)} \frac{N_{\lambda_\theta, \alpha} (w)}{(1 - |w|^2)^\alpha} \, dA (w) 
\converge_{h \to 0} 0 \, .
\end{displaymath} 
Since, for $w \in \Omega$:
\begin{equation} \label{comportement lens} 
1 - |\lambda_\theta^ {- 1} (w)| \approx (1 - |w|)^{1 / \theta} \, ,
\end{equation} 
we have:
\begin{displaymath} 
\int_{W (\xi, h)} \frac{N_{\lambda_\theta, \alpha} (w)}{(1 - |w|^2)^\alpha} \, dA (w) \approx h^{2 + \frac{1 - \theta}{\theta} \alpha} \, ;
\end{displaymath} 
so $C_{\lambda_\theta}$ is compact on ${\mathcal D}_\alpha^{\, 2}$ for $\alpha > 0$ , but is not compact on $\mathcal{D}_0^{\, 2}$.
\smallskip

For the estimates on approximation numbers, the proof follows the line of that of Theorem~\ref{diri}; hence we only sketch it. 
\smallskip\goodbreak

\noindent {\sl Lower estimate}

For  $0 < \alpha \leq 1$, we can use Theorem~\ref{theo compos op}, $3)$ $a)$ and \cite[Theorem~2.1]{LLQR-Israel}:
\begin{displaymath} 
\e ^{ - c \sqrt{n}} \lesssim \big[a_{2 n} \big(C_{\lambda_\theta}^{H^2} \big) \big]^2  
\lesssim a_n \big(C_{\lambda_\theta}^{{\mathcal D}_\alpha^{\, 2}} \big) \, ,
\end{displaymath} 
since $H^2 = {\mathcal D}_1^{\, 2}$ is dominated by ${\mathcal D}_\alpha^{\, 2}$. \smallskip

However, for all $\alpha > 0$, the proof given for the cusp map can be used also for the lens maps. The only difference is that, if  
$\lambda_\theta (u_j) = v_j$, we have $1 - u_j \approx (1 - v_j)^{1 / \theta}$, via \eqref{comportement lens}, and:
\begin{displaymath}
\qquad \qquad \qquad \qquad \lambda_\theta ' (z) \approx (1 - z)^{\theta  - 1} \qquad \text{for } z \in \D \text{ with } \Re z > 0 \, .
\end{displaymath}
Hence we get:
\begin{displaymath}
|\lambda_\theta ' (u_j)| \, \frac{\| K_{v_j}\|}{\| K_{u_j}\|} \gtrsim 
\frac{1}{(1 - v_j)^{\frac{1}{\theta} - 1}} \, 
\frac{(1 - v_j)^{\frac{1}{\theta} (1 + \frac{\alpha}{2})}}{(1 - v_j)^{1 + \frac{\alpha}{2}}} 
= (1 - v_j)^{\frac{\alpha}{2} (\frac{1}{\theta} - 1)} \, .
\end{displaymath}

Choosing $v_j = 1 - \e^{- j \eps}$, we get:
\begin{displaymath}
a_n^{{\mathfrak B}^2_\alpha} (M_{\lambda_\theta '} C_{\lambda_\theta}) 
\gtrsim \exp \bigg( - \bigg[ \frac{\alpha}{2} \Big(\frac{1}{\theta} - 1 \Big) \, n \, \eps + \frac{C}{\eps} \bigg] \bigg) \, .
\end{displaymath}
Taking $\eps = \sqrt{\frac{2 C}{\alpha} \frac{\theta}{1 - \theta}} \, \frac{1}{\sqrt{n}}$ gives now the result.
\medskip\goodbreak

\noindent {\sl Upper estimate.} 
\smallskip

$1)$ We have:
\begin{lemma}
For the lens map $\lambda_\theta$ of parameter $\theta$, $0 < \theta < 1$, we have, for $\alpha > 0$:
\begin{equation} 
\| \lambda_\theta^n \|_{\mathcal{D}_\alpha^{\, 2}} \lesssim n^{- \alpha / 2 \theta} \, . 
\end{equation} 
\end{lemma}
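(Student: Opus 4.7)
My plan is to compute $\|\lambda_\theta^n\|_{\mathcal{D}^{\,2}_\alpha}^2$ via the derivative and push the integral over to $\Omega = \lambda_\theta(\D)$ using the univalence of $\lambda_\theta$, then estimate the resulting integral by exploiting that $\Omega$ sits in an angular sector of opening $\theta \pi$ at the cusp $w = 1$.

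Note first that $\lambda_\theta(0) = 0$, so $\|\lambda_\theta^n\|_{\mathcal{D}^{\,2}_\alpha}^2 \approx n^2 \int_\D |\lambda_\theta(z)|^{2(n-1)} |\lambda_\theta'(z)|^2 (1-|z|^2)^\alpha\, dA(z)$. Since $\lambda_\theta$ is a conformal map onto $\Omega$, the change of variable $w = \lambda_\theta(z)$ transforms this into
\begin{displaymath}
n^2 \int_\Omega |w|^{2(n-1)} \bigl(1 - |\lambda_\theta^{-1}(w)|^2\bigr)^\alpha\, dA(w),
\end{displaymath}
and invoking \eqref{comportement lens}, namely $1-|\lambda_\theta^{-1}(w)| \approx (1-|w|)^{1/\theta}$ on $\Omega$, reduces the task to proving that
\begin{displaymath}
J_n := \int_\Omega |w|^{2(n-1)} (1 - |w|)^{\alpha/\theta}\, dA(w) \lesssim n^{-(\alpha/\theta + 2)}.
\end{displaymath}

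For the estimate of $J_n$, I set $u = 1 - w = \rho\,e^{i\psi}$. Since the image of the right half-plane under $z \mapsto z^\theta$ is the sector of opening $\theta\pi$, the lens $\Omega$ is contained in $\{u \tq 0 < \rho \le C,\ |\psi| \le \theta\pi/2\}$, and on this sector one has $1-|w| \approx \rho\cos\psi$ (a genuine equivalence because $\cos(\theta\pi/2) > 0$, as $\theta < 1$). Using $|w|^{2(n-1)} \le e^{-2(n-1)(1-|w|)} \lesssim e^{-2n\rho\cos\psi}$ and $dA(w) = \rho\, d\rho\, d\psi/\pi$, I obtain
\begin{displaymath}
J_n \lesssim \int_{-\theta\pi/2}^{\theta\pi/2} \int_0^C e^{-2n\rho\cos\psi} (\rho\cos\psi)^{\alpha/\theta}\, \rho\, d\rho\, d\psi.
\end{displaymath}

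The inner integral is handled by the substitution $s = 2n\rho\cos\psi$, which contributes a factor $(2n\cos\psi)^{-(\alpha/\theta+2)}$ times $\int_0^\infty s^{\alpha/\theta + 1} e^{-s}\, ds = \Gamma(\alpha/\theta + 2)$. After pulling the $(\cos\psi)^{\alpha/\theta}$ back in, we are left with
\begin{displaymath}
J_n \lesssim \frac{1}{n^{\alpha/\theta + 2}} \int_{-\theta\pi/2}^{\theta\pi/2} \frac{d\psi}{(\cos\psi)^2},
\end{displaymath}
and the $\psi$-integral is finite since $\theta < 1$. Plugging this back, $\|\lambda_\theta^n\|_{\mathcal{D}^{\,2}_\alpha}^2 \lesssim n^2 \cdot n^{-(\alpha/\theta + 2)} = n^{-\alpha/\theta}$, giving the announced bound. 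The only delicate point is the geometric observation that the lens fits into a sector of opening $\theta\pi < \pi$ near the cusp, ensuring $\cos\psi$ stays bounded away from $0$; everything else is a clean change of variables and an application of the Gamma integral.
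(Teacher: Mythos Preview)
Your approach---pushing the integral to the lens $\Omega=\lambda_\theta(\D)$ via the change of variables $w=\lambda_\theta(z)$---is a legitimate alternative to the paper's argument, which stays in the $z$-variable and uses the pointwise estimates $|\lambda_\theta(z)|\le\exp(-c\,|1-z|^\theta)$ and $|\lambda_\theta'(z)|\lesssim|1-z|^{\theta-1}$ directly. Both routes end with the same Gamma-type integral after passing to polar coordinates centred at the cusp.

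There is, however, a genuine gap in your argument. The lens $\Omega$ has \emph{two} cusps, at $w=1$ and at $w=-1$, and your claimed equivalence $1-|w|\approx\rho\cos\psi$ (with $\rho=|1-w|$, $\psi=\arg(1-w)$) fails near $w=-1$: there $\rho\cos\psi=\Re(1-w)\to 2$ while $1-|w|\to 0$. Consequently the bound $|w|^{2(n-1)}\lesssim e^{-2n\rho\cos\psi}$ is simply false on that part of $\Omega$, and the estimate of $J_n$ as written is not justified. The justification you offer (``$\cos(\theta\pi/2)>0$'') only shows $\cos\psi$ stays bounded below; it does not control the ratio $(2\cos\psi-\rho)/\cos\psi$ hidden in $1-|w|^2=\rho(2\cos\psi-\rho)$.

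The fix is easy and is exactly what the paper does: exploit the symmetry $\lambda_\theta(-z)=-\lambda_\theta(z)$, so that the integrand is invariant under $w\mapsto -w$ and $J_n=2\int_{\Omega\cap\{\Re w>0\}}(\cdots)\,dA$. On the right half of the lens the only contact with $\T$ is at $w=1$, and there one checks (e.g.\ by parametrising the boundary arcs) that $1-|w|\ge c_\theta\,\rho\cos\psi$ with $c_\theta>0$; your computation then goes through verbatim. You should insert this symmetry reduction before passing to polar coordinates at $1$.
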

\begin{proof}
Let $\Omega = \{z \in \D \tq \Re z > 0\}$. For $z \in \Omega$, we easily see that:
\begin{equation} 
|\lambda_\theta (z)| \leq \exp ( - c \, | 1 - z|^\theta) \quad \text{and} \quad |\lambda'_\theta (z) | \lesssim |1 - z|^{\theta - 1} \, .
\end{equation} 
We get, using $(1 - |z|)^\alpha \leq |1 - z|^\alpha$ and the symmetry of $\lambda_\theta$:
\begin{align*}
\| \lambda_\theta^n \|_{\mathcal{D}_\alpha^{\, 2}}^2 
& = 2 (\alpha + 1) \int_\Omega n^2 |\lambda_\theta (z)|^{2 n - 2} |\lambda'_\theta (z)|^2 (1 - |z|^2)^\alpha \\
& \lesssim n^2 \int_\Omega \exp \big( - 2 \, c (n - 1) \, | 1 - z|^\theta \big) \, |1 - z|^{2 \theta - 2} \, |1 - z |^\alpha \, dA (z) \, .
\end{align*}
Using polar coordinates centered at $1$ and then making the change of variable $x = c n r^\theta$, so $r = c^{- 1 / \theta} n^{- 1 / \theta} x^{1 / \theta}$, 
we obtain:
\begin{align*}
\| \lambda_\theta^n \|_{\mathcal{D}_\alpha^{\, 2}}^2 
& \lesssim n^2 \int_0^{+ \infty} \exp (- c n r^\theta) \, r^{2 \theta - 2} \, r^\alpha \, r \, dr \\
& \approx n^2 \int_0^{+ \infty} 
\e^{ - x} n^{- \frac{2 \theta - 1 + \alpha}{\theta}} x^{\frac{2 \theta - 1 + \alpha}{\theta}} x^{\frac{1}{\theta} - 1} n^{- 1 / \theta} \, dx 
\approx n^{- \alpha / \theta} \, .
\end{align*}
\end{proof}
\smallskip

$2)$ We have:
\begin{equation} 
\| C_{\lambda_\theta} - C_{\lambda_\theta} S_N \| _{\mathcal{D}_\alpha^{\, 2}}^2 
\lesssim  \frac{1}{N^{\frac{1 - \theta}{\theta}\, \alpha}} \, \cdot
\end{equation} 

In fact, using the Hilbert-Schmidt norm on $\mathcal{D}_\alpha^{\, 2}$:
\begin{align*} 
\| C_{\lambda_\theta} - C_{\lambda_\theta} S_N \| _{\mathcal{D}_\alpha^{\, 2}}^2 
& \leq \| C_{\lambda_\theta} - C_{\lambda_\theta} S_N \| _{HS}^2 
\lesssim \sum_{n > N} \frac{1}{n^{\alpha / \theta}} \, \frac{1}{n^{1 - \alpha}} \\
& =  \sum_{n > N} \frac{1}{n^{\frac{1 - \theta}{\theta}\, \alpha  + 1}}  
\approx \frac{1}{N^{\frac{1 - \theta}{\theta} \, \alpha}}  \, \cdot
\end{align*} 
\smallskip

$3)$ We have, exactly as for the cusp map:
\begin{equation} 
a_n (C_{\lambda_\theta} S_N) \lesssim N^{\frac{\alpha + 1}{2}} \, a_n (J) \, ,
\end{equation} 
where $J \colon H^2 \to L^2 (\mu)$ is the canonical injection.
\smallskip

$4)$ We have:
\begin{equation} 
a_n (J) \lesssim \e^{ - c \, \sqrt{\alpha} \, \sqrt{n}} \, . 
\end{equation} 

In fact, we take the Blaschke product $B_0$ as for the cusp map, except that here we take its length $r$ as the largest integer $< \sqrt n$. We then take 
$B = B_0^{[\alpha \sqrt{n}]}$. With the notation used for the cusp map, when $l < r$ and $2^{- r} \leq |w - 1| < 2^{- l}$, we have 
$|B (w)| \lesssim \kappa^{\alpha \sqrt{n}}$; and when $l \geq r$, we use \eqref{comportement lens}. 
\smallskip

$5)$ Finally, we have:
\begin{displaymath} 
a_n (C_{\lambda_\theta}) \lesssim \frac{1}{N^{\frac{1 - \theta}{\theta} \, \alpha}} + N^{\frac{\alpha + 1}{2}} \, \e^{- c \, \sqrt{\alpha n}} \, ,
\end{displaymath} 
and the choice for $N$ of the integer part of $\e^{c \, \frac{2 \theta}{2 \alpha + (1 - \alpha) \theta} \, \sqrt{\alpha} \, \sqrt{n}}$ gives:
\begin{displaymath} 
a_n (C_{\lambda_\theta}) \lesssim \exp \bigg( - c \, \frac{2 (1 - \theta)}{2 \alpha + (1 - \alpha) \theta} \, \alpha^{3/2} \, \sqrt{n} \bigg) \, \cdot \qedhere
\end{displaymath} 
\end{proof}

\noindent {\bf Remark.} Since $a_n (C_{\lambda_\theta} ) \gtrsim \e^{- \alpha b_\theta \sqrt{n}}$ for $\alpha > 0$, 
Theorem~\ref{theo compos op}, $3)$ $a)$, gives:
\begin{displaymath}
a_{n}^{{\mathcal D}_0^{\, 2}} (C_{\lambda_\theta}) \gtrsim\Big( a_{2 n}^{{\mathcal D}_\alpha^{\, 2}} (C_{\lambda_\theta})\Big)^2 \gtrsim 
\e^{- 2\alpha b_\theta \sqrt{2 n}} \, ;
\end{displaymath}
letting $\alpha$ tend to $0$, we get $a_n^{{\mathcal D}_0^{\, 2}} (C_{\lambda_\theta}) \gtrsim 1$ and we recover that $C_{\lambda_\theta}$ 
is not compact on ${\mathcal D}_0^{\, 2}$.
\bigskip

\noindent{\bf Acknowledgement.} L. Rodr{\'\i}guez-Piazza is partially supported by the project PGC2018-094215-B-I00 
 (Spanish Ministerio de Ciencia, Innovaci\'on y Universidades, and FEDER funds). 
 
Parts of this paper was made when he visited the Universit\'e d'Artois in Lens and the Universit\'e de Lille in January 2019. It is his pleasure to 
thank all his colleagues in these universities for their warm welcome.

This work is also partially supported by the grant ANR-17-CE40-0021 of the French National Research Agency ANR (project Front).

\goodbreak
%%%%%%%%%%%%%%%%%%%%%%%%%%%%%%%%%%%%%%%%%%%%%%%%%%%%%%%%%%%%%%%%%%%%%%%%%%%%%%

 %%%%%%%%%%%%%%%%%%%%%%%%%%%%%%%%%%%%%%%%%%%%%%%%%%%%%%%%%%
\smallskip
\vbox{
{\footnotesize
Pascal Lef\`evre \\
Univ. Artois, Laboratoire de Math\'ematiques de Lens (LML) UR~2462, \& F\'ed\'eration CNRS Nord-Pas-de-Calais FR~2956, 
Facult\'e Jean Perrin, Rue Jean Souvraz, S.P.\kern 1mm 18 
F-62\kern 1mm 300 LENS, FRANCE \\
pascal.lefevre@univ-artois.fr
\smallskip

Daniel Li \\ 
Univ. Artois, Laboratoire de Math\'ematiques de Lens (LML) UR~2462, \& F\'ed\'eration CNRS Nord-Pas-de-Calais FR~2956, 
Facult\'e Jean Perrin, Rue Jean Souvraz, S.P.\kern 1mm 18 
F-62\kern 1mm 300 LENS, FRANCE \\
daniel.li@univ-artois.fr
\smallskip

Herv\'e Queff\'elec \\
Univ. Lille Nord de France, USTL,  
Laboratoire Paul Painlev\'e U.M.R. CNRS 8524 \& F\'ed\'eration CNRS Nord-Pas-de-Calais FR~2956 
F-59\kern 1mm 655 VILLENEUVE D'ASCQ Cedex, FRANCE \\
Herve.Queffelec@univ-lille.fr
\smallskip
 
Luis Rodr{\'\i}guez-Piazza \\
Universidad de Sevilla, Facultad de Matem\'aticas, Departamento de An\'alisis Matem\'atico \& IMUS,  
Calle Tarfia s/n  
41\kern 1mm 012 SEVILLA, SPAIN \\
piazza@us.es
}
}

\end{document}